\newcommand{\ifarticle}[2]{
    \csname@ifclassloaded\endcsname{beamer}{#2}{#1}
}
        \setlist{topsep=2pt,itemsep=2pt,partopsep=2pt,parsep=2pt} 
        \xpretocmd{\@adminfootnotes}{\let\@makefntext\BHFN@OldMakefntext}{}{}
        \renewcommand\@makefntext[1]{%
        \@ifundefined{@makefnmark}
            {}
            {%
            \renewcommand\@makefnmark{%
            \mbox{%
                \textsuperscript{%
                \normalfont
                \hyperref[\BackrefFootnoteTag]{\@thefnmark}%
                }%
            }\,%
            }%
            \BHFN@OldMakefntext{#1}%
        }%
        }
        \LetLtxMacro{\BHFN@Old@footnotemark}{\@footnotemark}
        \renewcommand*{\@footnotemark}{%
            \refstepcounter{BackrefHyperFootnoteCounter}%
            \xdef\BackrefFootnoteTag{bhfn:\theBackrefHyperFootnoteCounter}%
            \label{\BackrefFootnoteTag}%
            \BHFN@Old@footnotemark
        }
        \def\paragraph{\@startsection{paragraph}{4}%
          \z@\z@{-\fontdimen2\font}%
          {\normalfont\bfseries}}
        \theoremstyle{plain}
        \newtheorem{theorem}{Theorem}[section]
        \newtheorem{proposition}[theorem]{Proposition}
        \newtheorem{lemma}[theorem]{Lemma}
        \newtheorem{corollary}[theorem]{Corollary}
        \newtheorem*{theorem*}{Theorem}
        \newtheorem*{corollary*}{Corollary}
        \theoremstyle{definition}
        \newtheorem{definition}[theorem]{Definition}
        \newtheorem{example}[theorem]{Example}
        \newtheorem{remark}[theorem]{Remark}
        \newenvironment{sketch}{\proof}{\endproof}
        \Crefname{theoremenumi}{Theorem}{Theorems}
            \setlist[enumerate,1]{
                ref={\csname thetheorem\endcsname.(\arabic*)}
            }%
            \setlist[enumerate,2]{
                ref={\thetheorem.(\arabic*).(\alph*)}
            }%
        \Crefname{propositionenumi}{Proposition}{Propositions}
            \setlist[enumerate,1]{
                ref={\csname theproposition\endcsname.(\arabic*)}
            }%
            \setlist[enumerate,2]{
                ref={\theproposition.(\arabic*).(\alph*)}
            }%
        \Crefname{lemmaenumi}{Lemma}{Lemmas}
            \setlist[enumerate,1]{
                ref={\csname thelemma\endcsname.(\arabic*)}
            }%
            \setlist[enumerate,2]{
                ref={\thelemma.(\arabic*).(\alph*)}
            }%
        \Crefname{corollaryenumi}{Corollary}{Corollaries}
            \setlist[enumerate,1]{
                ref={\csname thecorollary\endcsname.(\arabic*)}
            }%
            \setlist[enumerate,2]{
                ref={\thecorollary.(\arabic*).(\alph*)}
            }%
        \Crefname{definitionenumi}{Definition}{Definitions}
            \setlist[enumerate,1]{
                ref={\csname thedefinition\endcsname.(\arabic*)}
            }%
            \setlist[enumerate,2]{
                ref={\thedefinition.(\arabic*).(\alph*)}
            }%
        \Crefname{exampleenumi}{Example}{Examples}
            \setlist[enumerate,1]{
                ref={\csname theexample\endcsname.(\arabic*)}
            }%
            \setlist[enumerate,2]{
                ref={\theexample.(\arabic*).(\alph*)}
            }%
        \AtBeginEnvironment{\env}{%
          \pushQED{\qed}%
        }
        \AtEndEnvironment{\env}{\popQED\endexample}
    \DeclareDocumentCommand{\mathcommand}{mO{0}m}{%
    \expandafter\let\csname old\string#1\endcsname=#1
    \expandafter\newcommand\csname new\string#1\endcsname[#2]{#3}
    \DeclareRobustCommand#1{%
        \ifmmode
        \expandafter\let\expandafter\next\csname new\string#1\endcsname
        \else
        \expandafter\let\expandafter\next\csname old\string#1\endcsname
        \fi
        \next
    }%
    }
    \mathcommand{\h}{\textup{-}}
    \newcommand{\tx}{\mathrm}
    \mathcommand{\b}{\mathbf}
    \newcommand{\s}{\mathsf}
    \newcommand{\cl}{\mathcal}
    \mathcommand{\bb}{\mathbb}
    \DeclareMathAlphabet{\bbn}{U}{bbold}{m}{n}
    \newcommand{\dc}[1]{{\b{\bb#1}}}
    \mathcommand{\sf}{\mathsf}
    \mathcommand{\u}{\underline}
    \newcommand{\TODO}[1][TODO]{\textcolor{orange}{\textup{#1}}\xspace}
    \newcommand{\flip}[1]{\text{\rotatebox[origin=c]{-180}{$#1$}}}
    \newcommand{\datetoday}{\date{\cleanlookdateon\today}}
    \newcommand{\defeq}{\mathrel{:=}}
    \mathcommand{\d}{\mathbin{;}}
    \mathcommand{\c}{\circ}
    \newcommand{\ph}[1][]{{({-}_{#1})}}
    \newcommand{\iso}{\cong}
    \renewcommand{\equiv}{\simeq}
    \newcommand{\from}{\leftarrow}
    \newcommand{\xto}{\xrightarrow}
    \newcommand{\xfrom}{\xleftarrow}
    \newcommand{\tto}{\Rightarrow}
    \newcommand{\tfrom}{\Leftarrow}
    \newcommand{\xtto}{\xRightarrow}
    \newcommand{\xtfrom}{\xLeftarrow}
    \newcommand{\ffto}{\hookrightarrow}
    \newcommand*\cocolon{%
            \nobreak
            \mskip6mu plus1mu
            \mathpunct{}%
            \nonscript
            \mkern-\thinmuskip
            {:}%
            \mskip2mu
            \relax
    }
    \def\slashedarrowfill@#1#2#3#4#5{%
    $\m@th\thickmuskip0mu\medmuskip\thickmuskip\thinmuskip\thickmuskip
    \relax#5#1\mkern-7mu%
    \cleaders\hbox{$#5\mkern-2mu#2\mkern-2mu$}\hfill
    \mathclap{#3}\mathclap{#2}%
    \cleaders\hbox{$#5\mkern-2mu#2\mkern-2mu$}\hfill
    \mkern-7mu#4$%
    }
    \def\rightslashedarrowfill@{%
    \slashedarrowfill@\relbar\relbar\mapstochar\rightarrow}
    \newcommand\xslashedrightarrow[2][]{%
    \ext@arrow 0055{\rightslashedarrowfill@}{#1}{#2}}
    \def\leftslashedarrowfill@{%
    \slashedarrowfill@\leftarrow\relbar\mapsfromchar\relbar}
    \newcommand\xslashedleftarrow[2][]{%
    \ext@arrow 0055{\leftslashedarrowfill@}{#1}{#2}}
    \newcommand{\xlto}{\xslashedrightarrow}
    \newcommand{\lto}{\xlto{}}
    \newcommand{\xlfrom}{\xslashedleftarrow}
    \newcommand{\op}{{}^\tx{op}}
    \newcommand{\co}{{}^\tx{co}}
    \newcommand{\tp}[1]{\langle#1\rangle}
    \newcommand{\unit}{{\tp{}}}
    \newcommand{\rf}{\mathbin{\blacktriangleleft}}
    \newcommand{\adj}{\dashv}
    \newcommand{\radj}[1]{\mathrel{\adj_{#1}}}
    \newcommand{\ob}[1]{|#1|}
    \DeclareFontFamily{U}{min}{}
    \DeclareFontShape{U}{min}{m}{n}{<-> udmj30}{}
    \newcommand{\yo}{\!\text{\usefont{U}{min}{m}{n}\symbol{'210}}\!}
    \mathcommand{\comma}{\downarrow}
    \newcommand{\copi}{\flip\pi}
    \newsavebox{\whitecircstar}\sbox{\whitecircstar}{\kern.075em\tikz{\node[draw, circle,line width=.36pt, inner sep=0]{$*$};}\kern.075em}
    \newsavebox{\blackcircstar}\sbox{\blackcircstar}{\kern.075em\tikz{\node[fill, circle, line width=.36pt, inner sep=0, text=white]{$*$};}\kern.075em}
    \def\widebreve{\mathpalette\wide@breve}
    \def\wide@breve#1#2{\sbox\z@{$#1#2$}%
         \mathop{\vbox{\m@th\ialign{##\crcr
    \kern0.08em\brevefill#1{0.8\wd\z@}\crcr\noalign{\nointerlineskip}%
                        $\hss#1#2\hss$\crcr}}}\limits}
    \def\brevefill#1#2{$\m@th\sbox\tw@{$#1($}%
      \hss\resizebox{#2}{\wd\tw@}{\rotatebox[origin=c]{90}{\upshape(}}\hss$}
    \NewDocumentCommand{\jrule}{om}{%
        \IfNoValueTF{#1}
            {\textsc{#2}}
            {$#1$-\textsc{#2}}%
    }
    \newcommand{\Set}{{\b{Set}}}
    \newcommand{\FinSet}{{\b{FinSet}}}
    \newcommand{\V}{{\bb V}} 
    \newcommand{\Cat}{\b{Cat}}
    \newcommand{\VCat}{{\V\h\Cat}}
    \newcommand{\Mnd}{\b{Mnd}}
    \newcommand{\RMnd}{\b{RMnd}}
    \newcommand{\Alg}{\b{Alg}}
    \newcommand{\Coalg}{\b{Coalg}}
    \newcommand{\Opalg}{\b{Opalg}}
    \newcommand{\ff}{fully faithful}
    \newcommand{\ffness}{full faithfulness}
    \newcommand{\ioo}{identity-on-objects}
    \newcommand{\EM}{Eilenberg--Moore}
    \newcommand{\eg}{e.g.\@\xspace}
    \newcommand{\ie}{i.e.\@\xspace}
    \newcommand{\cf}{cf.\@\xspace}
    \newcommand{\aka}{a.k.a.\@\xspace}
    \NewDocumentCommand{\etc}{t.}{etc.\@\xspace}
    \NewDocumentCommand{\ibid}{t.}{ibid.\@\xspace}
    \NewDocumentCommand{\loccit}{t.}{loc.\ cit.\@\xspace}
    \newcommand{\pdc}{pseudo double category}
    \newcommand{\vd}{virtual double}
    \newcommand{\vdc}{\vd{} category}
    \newcommand{\vdcs}{\vd{} categories}
    \newcommand{\ve}{virtual equipment}
\patchcmd{\beamer@sectionintoc}{\vfill}{\vskip\itemsep}{}{}
  \colorlet{colour-bg}{black!85} 
  \definecolor{colour-primary}{HTML}{cc80ff} 
  \colorlet{colour-text}{black!10} 
  \colorlet{colour-subtle}{black!40} 
  \colorlet{colour-block-bg}{black!80} 
  \definecolor{colour-warning-bg}{HTML}{ffea80} 
  \definecolor{colour-warning-primary}{HTML}{e08152} 
  \apptocmd{\frame}{}{\justifying}{}
  \newtheorem{proposition}[theorem]{\translate{Proposition}}
  \newtheorem{sketch}[theorem]{\translate{Proof (sketch)}}
  \renewenvironment<>{block}[1]{%
      \begin{actionenv}#2%
        \par%
        \usebeamertemplate{block begin}}
      {\par%
        \usebeamertemplate{block end}%
      \end{actionenv}}
  \renewenvironment<>{exampleblock}[1]{%
      \begin{actionenv}#2%
          \par%
          \only<presentation>{
            \setbeamercolor{local structure}{parent=example text}}%
          \usebeamertemplate{block example begin}}
        {\par%
          \usebeamertemplate{block example end}%
        \end{actionenv}}
\newcommand{\X}{\bb X}
\newcommand{\tX}{\u\X}
\renewcommand{\Cat}{\dc{Cat}}
\newcommand{\I}{\s I}
\newcommand{\lh}[1]{\llbracket #1 \rrbracket}
\mathcommand{\L}{\textup{\sagittarius}}
\newcommand{\LMnd}{\L\Mnd}
\newcommand{\jAE}{j \colon A \to E}
\newcommand{\jadj}{\radj j}
\newcommand{\cp}[1]{\mathbin{\smallsmile_{#1}}}
\newcommand{\pc}[1]{\mathbin{\smallfrown_{#1}}}
\newcommand{\Kl}{\b{Kl}}
\newcommand{\aop}{{\rtimes}}
\newcommand{\oop}{{\ltimes}}
\newcommand{\colim}{\operatorname{colim}}
\renewcommand{\lim}{\operatorname{lim}}
\renewcommand{\defeq}{\mathrel{:=}}
\newcommand{\opcart}{\tx{opcart}}
\newcommand{\cart}{\tx{cart}}
\renewcommand{\EM}{\relax\ifmmode\b{EM}\else{}Eilenberg\nobreakdash--Moore\fi}
\mathcommand{\P}{\cl P}
\newcounter{eqstep}
\newcounter{eqsubstep}[eqstep]
\newcommand{\nexteqstep}{\stepcounter{eqstep}}
\newcommand{\raisedtarget}[1]{\Hy@raisedlink{\hypertarget{#1}{}}}
\newcommand{\tangleeq}{\refstepcounter{eqsubstep}\raisedtarget{eqstep:\theeqstep.\theeqsubstep}{\mathclap{\overset{(\theeqstep.\theeqsubstep)}{=}}}}
\newcommand\tangleeq*{\mathclap{=}}
\newcommand{\tangleeql}{\refstepcounter{eqsubstep}\raisedtarget{eqstep:\theeqstep.\theeqsubstep}{\mathllap{\overset{(\theeqstep.\theeqsubstep)}{=}}}}
\newcommand\tangleeql*{\mathllap{=}}
\newenvironment{tangleeqs*}{%
    \mathcommand{\=}{\tangleeq*}
    \global\let\externaldblbackslash\\
    \csname gather*\endcsname
    \ifundef{\internaldblbackslash}{%
        \global\let\internaldblbackslash\\%
        \gdef\\{\internaldblbackslash\tangleeql*}%
    }{}
}{%
    \csname endgather*\endcsname
    \global\undef\internaldblbackslash
    \global\let\\\externaldblbackslash
}
\newcommand{\clps}[1]{\textup{\guillemotleft} #1 \textup{\guillemotright}}
\newcommand{\M}{{\b M}}
\newcommand{\Cart}{\b{Cart}}
\newcommand{\F}{{\bb F}}
\newcommand{\flatw}{^{{+}}}
\newcommand{\VDbl}{{\b{VDbl}}}
\newcommand{\ssa}{structure--semantics adjunction}
\newcommand{\LMndX}{{\L\dc{Mnd}(\X)}}
\newenvironment{manualtheorem}[1]{%
  \manualtheoreminner
}{\endmanualtheoreminner}
\title{The nerve theorem for relative monads}
\author{Nathanael Arkor}
\address{Department of Software Science, Tallinn University of Technology, Estonia}
\author{Dylan McDermott}
\address{Department of Computer Science, Reykjavik University, Iceland}
\keywords{Relative monad, relative adjunction, monadicity, virtual double category, virtual equipment, formal category theory, enriched category theory}
\subjclass[2020]{18D70,18D65,18C15,18C20,18A40,18D60,18C10,18D20,18N10}
\begin{document}

\begin{abstract}
    A fundamental result in the theory of monads is the characterisation of the category of algebras for a monad in terms of a pullback of the category of presheaves on the category of free algebras: intuitively, this expresses that every algebra is a colimit of free algebras. We establish an analogous result for enriched relative monads with dense roots, and explain how it generalises the nerve theorems for monads with arities and nervous monads. As an application, we derive sufficient conditions for the existence of algebraic colimits of relative monads. More generally, we establish such a characterisation of the category of algebras in the context of an exact virtual equipment. In doing so, we are led to study the relationship between a $j$-relative monad $T$ and its associated loose-monad $E(j, T)$, and consequently show that the opalgebra object and the algebra object for $T$ may be constructed from certain double categorical limits and colimits associated to $E(j, T)$.
\end{abstract}

\dedicatory{In memory of F.\ William Lawvere, whose deep understanding of the nature of algebra has shaped our own.}

\maketitle

\setcounter{tocdepth}{1}
\tableofcontents

\section{Introduction}

A monad may be viewed as an abstraction of the specification of an algebraic structure by operators and equations~\cite{lawvere1963functorial,linton1966some,linton1966triples}. Consequently, an algebra for a monad is an abstraction of the classical notion of an algebra qua a set equipped with algebraic structure. Classically, every algebra may be presented as a quotient of a free algebra. The same is true for algebras for monads: given a monad $T = (t, \mu, \eta)$ and a $T$-algebra $(a, \alpha)$, the following diagram forms a coequaliser in the category of $T$-algebras~\cite{beck1966untitled}.
\begin{equation}
\label{coequaliser}
\begin{tikzcd}
	tta & ta & a
	\arrow["\alpha", from=1-2, to=1-3]
	\arrow["t\alpha", shift left, from=1-1, to=1-2]
	\arrow["{\mu_a}"', shift right, from=1-1, to=1-2]
\end{tikzcd}
\end{equation}
Consequently, every $T$-algebra is a colimit of free $T$-algebras. This observation may be refined by considering not just an individual $T$-algebra, but the entire category of $T$-algebras. Denote by $u_T \colon \Alg(T) \to A$ the forgetful functor from the category of algebras; by $k_T \colon A \to \Kl(T)$ the inclusion functor into the category of free algebras (\ie{} the Kleisli category); and by ${\yo_A \colon A \to [A\op, \Set]}$ the Yoneda embedding. Then the following diagram forms a pullback of categories~\cite[Observation~1.1]{linton1969outline}.
\begin{equation}
\label{yo-pullback}
\begin{tikzcd}
	{\Alg(T)} & {[\Kl(T)\op, \Set]} \\
	A & {[A\op, \Set]}
	\arrow["{u_T}"', from=1-1, to=2-1]
	\arrow["{[k_T\op, \Set]}", from=1-2, to=2-2]
	\arrow[""{name=0, anchor=center, inner sep=0}, "{\yo_A}"', hook, from=2-1, to=2-2]
	\arrow[hook, from=1-1, to=1-2]
	\arrow["\lrcorner"{anchor=center, pos=0.125}, draw=none, from=1-1, to=0]
\end{tikzcd}
\end{equation}
Assuming that $A$ is small, the presheaf category $[\Kl(T)\op, \Set]$ is the free cocompletion of the category of free $T$-algebras under all small colimits. The pullback above exhibits the category of algebras as a full subcategory of the presheaf category, which is furthermore closed under representables, since every free algebra is, in particular, an algebra.
Consequently, the category of algebras may be seen as a cocompletion of the category of free algebras under a class of small colimits~\cite[Theorem~5.19]{kelly1982basic}.

The primary purpose of this paper is to generalise this result to \emph{relative monads}, which generalise monads by permitting the underlying functors to be arbitrary functors, rather than endofunctors~\cite{altenkirch2010monads,altenkirch2015monads}. Specifically, given a dense functor $\jAE$, we prove that, for a $j$-relative monad $T$, the following diagram forms a pullback of categories, where we denote by $n_j \colon E \to [A\op, \Set]$ the nerve of $j$, which sends an object $e \in \ob E$ to the restricted Yoneda embedding~$a \mapsto E(j a, e)$.
\begin{equation}
\label{n_j-pullback}
\begin{tikzcd}
    {\Alg(T)} & {[\Kl(T)\op, \Set]} \\
    E & {[A\op, \Set]}
    \arrow["{u_T}"', from=1-1, to=2-1]
    \arrow["{[k_T\op, \Set]}", from=1-2, to=2-2]
    \arrow[""{name=0, anchor=center, inner sep=0}, "{n_j}"', hook, from=2-1, to=2-2]
    \arrow[hook, from=1-1, to=1-2]
    \arrow["\lrcorner"{anchor=center, pos=0.125}, draw=none, from=1-1, to=0]
\end{tikzcd}
\end{equation}
Furthermore, the unlabelled functor above is isomorphic to the nerve of the comparison functor $i_T \colon \Kl(T) \to \Alg(T)$ and consequently exhibits the $T$-algebras as certain presheaves on the category of free $T$-algebras: namely, those presheaves in the essential image of the nerve $n_{i_T} \colon \Alg(T) \to [\Kl(T)\op, \Set]$. We therefore call this the \emph{nerve theorem} for relative monads. Such a theorem is of interest for several reasons. From a purely conceptual point of view, it establishes that (under the mild assumption of density of the root $j$) algebras for relative monads are colimits of free algebras. This is deeper an observation than might appear at first, since algebras for relative monads do not admit a coequaliser presentation\footnotemark{} like that for monads~\eqref{coequaliser}, as it is not possible to iterate the underlying functor $t \colon A \to E$.
\footnotetext{A similar obstruction arises when one attempts to generalise the classical monadicity theorem~\cite{beck1966untitled}, which is formulated in terms of coequalisers, to relative monads. However, there, too, it is possible to entirely avoid consideration of coequalisers~\cite{arkor2024relative} (\cf{}~\cite{pare1971absolute}).}%
From a practical point of view, it facilitates simple proofs that the category of algebras for a (relative) monad inherits structure from its base: for instance, it follows easily from \eqref{n_j-pullback} that the category of algebras for an accessible monad on a locally presentable category is itself locally presentable. But perhaps of most interest is the role of the theorem in categorical logic.

Indeed, despite relative monads and their algebras having been introduced only in the last 15 years, pullbacks resembling \eqref{n_j-pullback} are much older. A similar pullback first appeared in \cite[\S2]{linton1969outline}, in \citeauthor{linton1969outline}'s study of \citeauthor{lawvere1963functorial}'s \ssa{}, which relates algebraic theories to their categories of algebras~\cite[Theorem~III.1.2]{lawvere1963functorial}. Motivated by \textcite{linton1969outline}, such pullbacks appeared also in the work of  \citeauthor{diers1974jadjonction}~\cites[\S4.0]{diers1974jadjonction}[\S1]{diers1976foncteur} and of \textcite[Theorem~2.2.7]{lee1977relative}, both of whom took \eqref{n_j-pullback} as the \emph{definition} of categories of algebras for structures that turn out to be equivalent to relative monads~\cite{arkor2024formal}. More recently, the pullback \eqref{n_j-pullback} has been rediscovered in work on generalisations of the notion of algebraic theory, and nerve theorems for associated classes of monads~\cite{leinster2004nerves,weber2007familial,nishizawa2009lawvere,lack2009gabriel,mellies2010segal,lack2011notions,berger2012monads,bourke2019monads,lucyshyn2023enriched}. However, the connection to relative monads has not been observed\footnotemark{}.
\footnotetext{Though see \cref{BFG}.}%
In fact, the nerve theorem for relative monads strictly subsumes the nerve theorems for monads that have appeared previously in the literature. Thus, the importance of the nerve theorem for relative monads has essentially already been appreciated throughout the categorical literature, albeit implicitly.

\subsection{Relative monads and distributors}

While it is entirely possible simply to give a direct proof of the nerve theorem (as we shall do in \cref{unenriched-pullback-theorem}), this sheds little light on the conceptual reason the theorem holds. Thus, the secondary purpose of this paper is to elucidate the phenomenon.

The key to understanding the nature of \eqref{n_j-pullback} lies in the study of the connection between the theory of relative monads and the theory of distributors (\aka{} profunctors or (bi)modules). It was observed in \cite{arkor2024formal} that to every $(\jAE)$-relative monad $T$ there is an associated \emph{loose-monad} (\aka{} promonad, profunctor monad, or arrow) $E(j, T)$ on $A$ (\cf{}~\cite{diers1975jmonades}). In fact, the connection between relative monads and loose-monads is very strong: $j$-relative monads can be characterised as loose-monads whose carrier has the form $E(j, t)$, for a functor $t \colon A \to E$, satisfying a natural compatibility condition (\cref{relative-monad-as-section}). In other words, the $j$-relative monad $T$ and the loose-monad $E(j, T)$ may be seen as different presentations of the same structure. It is then natural to ask to what extent we can understand relative monads via their associated loose-monads: for instance, whether we can capture the $T$-algebras (\aka{} the left-modules, which are classified by the \EM{} category) and $T$-opalgebras (\aka{} the right-modules, which are classified by the Kleisli category) via natural structures associated to $E(j, T)$.

This turns out to be the case: $T$-algebras may be characterised as certain loose-monad modules into $E(j, T)$, assuming $j$ is dense; while $T$-opalgebras may be characterised as certain loose-monad morphisms from $E(j, T)$. Since the Kleisli category and the \EM{} category for $T$ exhibit a universal opalgebra and a universal algebra for $T$ respectively, we may thus characterise the Kleisli category as a certain universal loose-monad morphism associated to $E(j, T)$ -- its \emph{collapse}~\cite{schultz2015regular,wood1985proarrows} -- and the \EM{} category as a certain universal loose-monad module associated to $E(j, T)$ -- its \emph{semanticiser} (\cref{semanticiser}). The collapse of a loose-monad is a kind of (virtual) double categorical colimit, while the semanticiser is a kind of (virtual) double categorical limit. The final step in understanding the nerve theorem is to observe that semanticisers may be constructed from pullbacks and presheaf categories, which, in particular, recovers the nerve theorem \eqref{n_j-pullback}.

In the special case of non-relative monads, the relationship between monads and loose-monads, and the connection to the theory of algebras and opalgebras, was first observed by \textcite{wood1985proarrows} in the setting of proarrow equipments, inspired by \citeauthor{thiebaud1971relative}'s study of the \ssa{}~\cite{thiebaud1971relative}. However, it is fair to say that this relationship is not well known, and the perspective we present is new. Therefore, we hope that this paper serves as an exposition of these ideas even in the non-relative case.

Before we begin, we should say a word about the setting in which we work. While we have spoken so far only about unenriched relative monads, the results we have mentioned hold more generally. Following \cite{arkor2024formal,arkor2024relative}, we shall work throughout in the context of a \emph{virtual equipment}~\cite{cruttwell2010unified}, which is a two-dimensional framework for formal category theory. This permits us to establish our theorems for various flavours of category theory at once: in particular, we obtain a nerve theorem for enriched relative monads (\cref{enriched-pullback-theorem}) by instantiating the main theorem (\cref{pullback-theorem}) in the virtual equipment $\VCat$ of categories enriched in a monoidal category $\V$.

\subsection{Outline of the paper}

We begin in \cref{pullback-theorem-and-applications} by sketching a direct proof of the nerve theorem for unenriched relative monads, presenting several applications of the theorem, and discussing the connection to nerve theorems for monads. In \cref{prerequisites}, we briefly recall the formal categorical concepts from \cite{arkor2024formal} that will be fundamental to our study of the nerve theorem. In \cref{associated-loose-monad}, we study the canonical loose-monad associated to a relative monad, and thereby characterise relative monads as certain sections of loose-monads. We take a brief interlude in \cref{exact-vdcs} to introduce exactness for \ve{}s, which is necessary to relate the algebra objects and opalgebra objects for a relative monad $T$ to its associated loose-monad $E(j, T)$, which we do in \cref{algebras-and-opalgebras}. In \cref{the-pullback-theorem}, we show that, assuming the existence of presheaf objects, we obtain a formal nerve theorem; and in \cref{loose-monads-in-VCat} demonstrate how the formal theory may be applied to $\VCat$ to obtain a nerve theorem for enriched relative monads. We conclude in \cref{loose-monads-and-yo-relative-monads} by observing that loose-monads may be seen as monads relative to Yoneda embeddings (\cf{}~\cite[\S5]{altenkirch2015monads}). Finally, in \cref{strictification} we include a strictification result for \ve{}s (\cref{strictification-for-restriction}) and pseudo equipments (\cref{strictification-of-pseudo-equipments}), which provides conceptual justification for a simplifying assumption taken in the paper~(\cref{strict-ve}).

\begin{remark}
    In \cite[\S5.5]{arkor2022monadic}, the first-named author gave an alternative proof of the nerve theorem for relative monads, based on embeddings of relative monads rather than on distributors. We shall relate the two approaches in future work.
\end{remark}

\begin{remark}
    \label{BFG}
    After presenting an early version of this work to the Masaryk University Algebra Seminar, the authors were informed that John Bourke, Marcelo Fiore, and Richard Garner have, in unpublished joint work, independently established a nerve theorem for enriched relative monads.
\end{remark}

\subsection{Notation}

We have chosen to refrain from some common abuses of notation. We denote the pullback of a cospan $f \colon A \to C \from B \cocolon g$ by $f \times_C g$ rather than by $A \times_C B$. We denote the underlying function of an (enriched) functor $f \colon A \to B$ by $\ob f \colon \ob A \to \ob B$ rather than by $f \colon \ob A \to \ob B$. For morphisms $f \colon a \to b$ and $g \colon b \to c$, we denote by $(f \d g) \colon a \to c$ or $g f \colon a \to c$ their composite.

\subsection{Acknowledgements}

The authors thank the anonymous reviewer for their comments, and in particular for suggesting a simpler proof of \cref{monoid-section-is-monoid}; and thank John Bourke for insights about limits of locally presentable categories.
The first-named author was supported by a departmental postdoctoral grant from the Department of Software Science at Tallinn University of Technology. The second-named author was supported by Icelandic Research Fund grant \textnumero\,228684-052.

\section{The nerve theorem and its applications}
\label{pullback-theorem-and-applications}

We begin by presenting the nerve theorem for unenriched relative monads (\cref{unenriched-pullback-theorem}), sketching a direct proof of the theorem, and giving several applications. The remainder of the paper is devoted to giving an abstract justification for the theorem (\cref{associated-loose-monad,exact-vdcs,algebras-and-opalgebras,the-pullback-theorem}) and thereby deducing the nerve theorem for enriched relative monads (\cref{loose-monads-in-VCat}). For convenience, we first recall the definitions central to this section.

\subsection{Relative adjunctions and relative monads}

\begin{definition}[{\cite[Definition~2.2]{ulmer1968properties}}]
    Let $\jAE$ be a functor. A \emph{$j$-relative adjunction} (or simply \emph{$j$-adjunction}) in $\Cat$ comprises
    \begin{enumerate}
        \item a functor $\ell \colon A \to C$;
        \item a functor $r \colon C \to E$;
        \item an isomorphism $C(\ob \ell x, y) \iso E(\ob j x, \ob r y)$ natural in $x \in \ob A$ and $y \in \ob C$.
    \end{enumerate}
    We denote this situation by $\ell \jadj r$.
    \[\begin{tikzcd}
    	& C \\
    	A && E
    	\arrow[""{name=0, anchor=center, inner sep=0}, "\ell"{pos=0.4}, from=2-1, to=1-2]
    	\arrow[""{name=1, anchor=center, inner sep=0}, "r"{pos=0.6}, from=1-2, to=2-3]
    	\arrow["j"', from=2-1, to=2-3]
    	\arrow["\dashv"{anchor=center}, shift right=2, draw=none, from=0, to=1]
    \end{tikzcd}\]
\end{definition}

Just as every adjunction induces a monad, every $j$-adjunction induces a $j$-monad.

\begin{definition}[{\cite[Definition~2.1]{altenkirch2015monads}}]
    Let $\jAE$ be a functor. A \emph{$j$-relative monad} (or simply \emph{$j$-monad}) in $\Cat$ comprises
    \begin{enumerate}
        \item for each object $x \in \ob A$, an object $\ob t x \in \ob E$;
        \item for each morphism $f \colon \ob j x \to \ob t y$ in $E$, a morphism $f^\dag \colon \ob t x \to \ob t y$ in $E$;
        \item for each object $x \in \ob A$, a morphism $\eta_x \colon \ob j x \to \ob t x$ in $E$,
    \end{enumerate}
    satisfying the following laws.
    \begin{enumerate}[resume]
        \item $\eta_x \d f^\dag = f$ for each morphism $f \colon \ob j x \to \ob t y$ in $E$.
        \item ${\eta_x}^\dag = 1_{\ob t x}$ for each object $x \in \ob A$.
        \item $(f \d g^\dag)^\dag = f^\dag \d g^\dag$ for each pair of morphisms $f \colon \ob j x \to \ob t y$ and $g \colon \ob j y \to \ob t z$ in $E$.
        \qedhere
    \end{enumerate}
\end{definition}

It follows that $\ob t$ extends uniquely to a functor $t \colon A \to E$ for which $\dag$ and $\eta$ are natural~\cite[Theorem~8.12]{arkor2024formal}.

\begin{definition}[{\cite[\S2.3]{altenkirch2015monads}}]
    Let $\jAE$ be a functor and let $T = (t, \dag, \eta)$ be a $j$-monad. The \emph{Kleisli category} $\Kl(T)$ has the same objects as $A$, and has morphisms given by $\Kl(T)(x, y) \defeq E(\ob j x, \ob t y)$. The identity morphism for an object $x \in \ob A$ is given by $\eta_x \colon \ob j x \to \ob t x$ in $E$; and the composite of morphisms $f \colon x \to y$ and $g \colon y \to z$ in $\Kl(T)$ is given by $(f \d g^\dag) \colon \ob j x \to \ob t z$ in $E$.
\end{definition}

The Kleisli category for a $j$-monad forms a $j$-adjunction, whose left adjoint sends a morphism $f \colon x \to y$ in $A$ to $(f \d \eta_y) \colon \ob j x \to \ob t y$ in $E$, and whose right adjoint is given by the action of $\dag$.
\[\begin{tikzcd}
	& {\Kl(T)} \\
	A && E
	\arrow[""{name=0, anchor=center, inner sep=0}, "{v_T}", from=1-2, to=2-3]
	\arrow[""{name=1, anchor=center, inner sep=0}, "{k_T}", from=2-1, to=1-2]
	\arrow["j"', from=2-1, to=2-3]
	\arrow["\dashv"{anchor=center}, shift right=2, draw=none, from=1, to=0]
\end{tikzcd}\]

\begin{definition}[{\cite[Definition~2.11]{altenkirch2015monads}}]
    Let $\jAE$ be a functor and let $T = (t, \dag, \eta)$ be a $j$-monad. A \emph{$T$-algebra} comprises
    \begin{enumerate}
        \item an object $e \in \ob E$;
        \item for each morphism $f \colon \ob j x \to e$ in $E$, a morphism $f^\aop \colon \ob t x \to e$ in $E$,
    \end{enumerate}
    satisfying the following laws.
    \begin{enumerate}[resume]
        \item $\eta_x \d f^\dag = f$ for each morphism $f \colon \ob j x \to e$ in $E$.
        \item $(f \d g^\aop)^\aop = f^\dag \d g^\aop$ for each pair of morphisms $f \colon \ob j x \to \ob t y$ and $g \colon \ob j y \to e$ in $E$.
    \end{enumerate}
    A \emph{$T$-algebra morphism} from $(e, \aop)$ to $(e', \aop')$ is a morphism $\epsilon \colon e \to e'$ such that $f^\aop \d \epsilon = (f \d \epsilon)^{\aop'}$ for each morphism $f \colon \ob j x \to e$ in $E$. $T$-algebras and their morphisms form a category $\Alg(T)$.
\end{definition}

It follows that, for each $T$-algebra $(e, \aop)$, the extension operator $\aop$ is natural. The category of algebras for a $j$-monad forms a $j$-adjunction, whose left adjoint is given by applying $t$, and whose right adjoint forgets the $T$-algebra structure.
\[\begin{tikzcd}
	& {\Alg(T)} \\
	A && E
	\arrow[""{name=0, anchor=center, inner sep=0}, "{u_T}", from=1-2, to=2-3]
	\arrow[""{name=1, anchor=center, inner sep=0}, "{f_T}", from=2-1, to=1-2]
	\arrow["j"', from=2-1, to=2-3]
	\arrow["\dashv"{anchor=center}, shift right=2, draw=none, from=1, to=0]
\end{tikzcd}\]

There is a canonical \ff{} comparison functor $i_T \colon \Kl(T) \to \Alg(T)$, which sends each object $x \in \ob A$ to $\ob t x$, and each morphism $f \colon \ob j x \to \ob t y$ in $E$ to $f^\dag$, thus exhibiting $\Kl(T)$ as the category of free $T$-algebras.

\subsection{The nerve theorem}

\begin{definition}
    For a functor $\jAE$ between locally small categories, we denote by $n_j \colon E \to [A\op, \Set]$ the \emph{nerve} of $j$, which sends an object $e \in \ob E$ to the restricted Yoneda embedding $a \mapsto E(ja, e)$. By definition, $j$ is \emph{dense} if its nerve $n_j$ is \ff{}.
\end{definition}

\begin{theorem}
    \label{unenriched-pullback-theorem}
    Let $\jAE$ be a dense functor between locally small categories. For a $j$-monad $T$, the category of $T$-algebras exhibits the following pullback of categories.
    \[\begin{tikzcd}
    	{\Alg(T)} & {[\Kl(T)\op, \Set]} \\
    	E & {[A\op, \Set]}
    	\arrow["{u_T}"', from=1-1, to=2-1]
    	\arrow["{[k_T\op, \Set]}", from=1-2, to=2-2]
    	\arrow[""{name=0, anchor=center, inner sep=0}, "{n_j}"', hook, from=2-1, to=2-2]
    	\arrow[hook, from=1-1, to=1-2]
    	\arrow["\lrcorner"{anchor=center, pos=0.125}, draw=none, from=1-1, to=0]
    \end{tikzcd}\]
    Furthermore, the unlabelled functor above is isomorphic to the nerve of the comparison functor $i_T \colon \Kl(T) \to \Alg(T)$, exhibiting $i_T$ as a dense functor.
\end{theorem}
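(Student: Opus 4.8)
The plan is to exhibit $\Alg(T)$ as the (strict) pullback $\mathcal{P}$ of the cospan $n_j \colon E \to [A\op, \Set]$, $\;[k_T\op, \Set] \colon [\Kl(T)\op, \Set] \to [A\op, \Set]$, and then to read off the remaining claims. Unwinding definitions, an object of $\mathcal{P}$ is a pair $(e, P)$ with $e \in \ob E$ and $P \colon \Kl(T)\op \to \Set$ satisfying $P \c k_T\op = n_j(e)$. Since $k_T$ is \ioo{}, this equation forces $P(x) = E(\ob j x, e)$ on objects and forces the action of $P$ on morphisms of $A$ to be precomposition; so the content of $P$ beyond $n_j(e)$ is, for each Kleisli morphism $g \colon \ob j x \to \ob t y$, a function $P(g) \colon E(\ob j y, e) \to E(\ob j x, e)$ obeying the presheaf axioms. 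First I would treat the easy direction: a $T$-algebra $(e, \aop)$ yields such a $P$, namely $P_e(x) \defeq E(\ob j x, e)$ with $P_e(g)(h) \defeq g \d h^\aop$; functoriality of $P_e$ is exactly the two $T$-algebra laws, and $P_e \c k_T\op = n_j(e)$ follows from the unit law $\eta_x \d h^\aop = h$. An algebra morphism $\epsilon$ is sent to $(\epsilon, P_\epsilon)$, where $P_\epsilon$ is postcomposition by $\epsilon$; naturality of $P_\epsilon$ over Kleisli morphisms is precisely the defining equation of an algebra morphism. This produces a functor $\Alg(T) \to \mathcal{P}$ whose composites with the two projections are $u_T$ and $(e, \aop) \mapsto P_e$.

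The heart of the argument is the construction of a two-sided inverse, and this is where density of $j$ enters. Given $(e, P) \in \mathcal{P}$: because composing (in $\Kl(T)$) a morphism of $A$ with a Kleisli morphism collapses by the unit law, one has $\Kl(T)(k_T({-}), x) = n_j(\ob t x)$ as presheaves on $A$. Hence, for each $f \in P(x) = E(\ob j x, e)$, the Yoneda correspondence for $\Kl(T)$ gives a natural transformation $\Kl(T)({-}, x) \to P$, whose restriction along $k_T$ is a natural transformation $n_j(\ob t x) \to P \c k_T\op = n_j(e)$; since $j$ is dense, $n_j$ is \ff{}, so this transformation is $n_j$ of a unique morphism $f^\aop \colon \ob t x \to e$, characterised by $h \d f^\aop = P(h)(f)$ for all $h \colon \ob j a \to \ob t x$. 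I would then verify the two $T$-algebra laws: the unit law is the instance $h = \eta_x$ together with $\eta_x$ being the Kleisli identity, and the associativity law, after applying the faithful functor $n_j$, reduces to functoriality of $P$ together with the characterisations of $f^\aop$ and of the Kleisli composite. A routine check --- using faithfulness of $n_j$ once more to recognise that a morphism of $\mathcal{P}$ is compatible with the projections exactly when its underlying map of $E$ is a $T$-algebra morphism --- shows this assignment is a functor inverse to the previous one, so $\Alg(T) \iso \mathcal{P}$ compatibly with the projections. Moreover, since $n_j$ is \ff{}, every natural transformation between presheaves of the form $n_j({-})$ is $n_j$ of a unique morphism of $E$, so the projection $\mathcal{P} \to [\Kl(T)\op, \Set]$ is \ff{}.

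It remains to identify the top leg $(e, \aop) \mapsto P_e$ with the nerve $n_{i_T}$. The $j$-adjunction $f_T \jadj u_T$ of the category of algebras gives, for each $x$, a bijection $\Alg(T)(f_T x, (e, \aop)) \iso E(\ob j x, e)$; since $f_T$ factors as $i_T$ after $k_T$ and $i_T$ sends a Kleisli morphism $g$ to $g^\dag$, chasing the monad laws $\eta_x \d g^\dag = g$ and ${\eta_x}^\dag = 1$ shows this bijection is natural also in $x \in \Kl(T)$, that is, $n_{i_T} \iso \bigl((e, \aop) \mapsto P_e\bigr)$. Combining this with the \ffness{} of the projection $\mathcal{P} \to [\Kl(T)\op, \Set]$ noted above shows $n_{i_T}$ is \ff{}, \ie{} $i_T$ is dense.

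I expect the main obstacle to be the inverse construction in the second paragraph --- producing the extension operator $\aop$ from $P$ and verifying its laws --- since this is the only step that genuinely uses density, and some care is needed so that the identification $\Kl(T)(k_T({-}), x) = n_j(\ob t x)$ and the characterisation $h \d f^\aop = P(h)(f)$ are set up in such a way that the monad and algebra laws translate cleanly under $n_j$.
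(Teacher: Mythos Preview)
Your proposal is correct and follows essentially the same approach as the paper's proof sketch: both construct the functor $\Alg(T) \to \mathcal P$ from the extension operator, build the inverse by using density of $j$ to extract $f^\aop$ from the presheaf action (you phrase this via Yoneda for $\Kl(T)$ followed by restriction along $k_T$, whereas the paper transposes the action map $E(ja,ta')\times E(ja',e)\to E(ja,e)$; these are the same construction), and then identify the top leg with $n_{i_T}$ via the $j$-adjunction $f_T \jadj u_T$ and the factorisation $f_T = k_T \d i_T$, deducing density of $i_T$ from stability of fully faithful functors under pullback.
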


\begin{sketch}
    Given a $T$-algebra $(e \in |E|, \aop \colon E(j{-}, e) \tto E(t{-}, e))$, we define a presheaf on $\Kl(T)$ by the following assignments,
    \begin{align*}
        a & \mapsto E(\ob j a, e) &
        (f \colon \ob j a \to \ob t a') & \mapsto \big((g \colon \ob j a' \to e) \mapsto ((f \d g^\aop) \colon \ob j a \to e)\big)
    \end{align*}
    with functoriality following from the two $T$-algebra laws. A $T$-algebra morphism induces a natural transformation between the corresponding presheaves by postcomposition. This defines a functor $\Alg(T) \to [\Kl(T)\op, \Set]$ rendering the square above commutative, and thus induces a functor $\Alg(T) \to n_j \times_{[A\op, \Set]} [k_T\op, \Set]$ into the pullback.

    In the other direction, given an object $e \in \ob E$ and a presheaf $p \colon \Kl(T)\op \to \Set$ in the pullback, observe that each Kleisli morphism $f \colon \ob j a \to \ob t a'$ induces a function $p(f) \colon p(a') = E(\ob j a', e) \to E(\ob j a, e) = p(a)$. This assignment defines a natural transformation $E(\ob j a, \ob t a') \times E(\ob j a', e) \to E(\ob j a, e)$, which, by transposing, is equivalently a natural transformation $E(\ob j a', e) \to [A\op, \Set](E(j{-}, \ob t a'), E(j{-}, e))$. When $j$ is dense, so that $n_j$ is \ff{}, the codomain is isomorphic to $E(\ob t a', e)$, which produces a function $E(\ob j a', e) \to E(\ob t a', e)$ for each $a' \in \ob A$. This family of functions forms a $T$-algebra structure on $e$, the $T$-algebra laws following from functoriality of $p$. Furthermore, given a morphism $\epsilon \colon e \to e'$ and a natural transformation $p \tto p'$ in the pullback, naturality of the latter implies that $\epsilon$ is a morphism between the induced $T$-algebras. The assignment thus extends to a functor $n_j \times_{[A\op, \Set]} [k_T\op, \Set] \to \Alg(T)$.

    We leave to the reader the routine checks that the functors $\Alg(T) \rightleftarrows n_j \times_{[A\op, \Set]} [k_T\op, \Set]$ thus defined are inverse to one another.

    Finally, since $f_T \jadj u_T$, and $f_T = (k_T \d i_T)$, we have an isomorphism \[E(\ob j a, e) \iso \Alg(T)(\ob{i_T k_T} a, (e, \aop)) = \Alg(T)(\ob{i_T} a, (e, \aop))\] natural in $(e, \aop) \in \ob{\Alg(T)}$, using that $k_T$ is \ioo{}. Thus, to establish that the unlabelled functor is isomorphic to the nerve of $i_T$, it is enough to verify that this isomorphism is natural in $a \in \ob{\Kl(T)}$. However, naturality corresponds to the commutativity of the following square, for each morphism $f \colon \ob j a \to \ob ta'$, which is trivial.
    \[\begin{tikzcd}[column sep=large]
    	{E(\ob ja, e)} & {E(\ob ja', e)} \\
    	{\Alg(T)(\ob{i_T}a, (e, \aop))} & {\Alg(T)(\ob{i_T}a', (e, \aop))}
    	\arrow["{\ph^\aop}"', from=1-1, to=2-1]
    	\arrow["{f \d \ph^\aop}"', from=1-2, to=1-1]
    	\arrow["{\ph^\aop}", from=1-2, to=2-2]
    	\arrow["{(f \d \ph)^\aop}", from=2-2, to=2-1]
    \end{tikzcd}\]
    Thus the unlabelled functor is isomorphic to the nerve $n_{i_T} \colon \Alg(T) \to [\Kl(T)\op, \Set]$. That the nerve is \ff{} (and hence that $i_T$ is dense) follows from stability of \ff{} functors under pullback.
\end{sketch}

Dually, the category of coalgebras for a relative comonad may be expressed as a pullback over a category of copresheaves. (Since the theory of relative comonads is formally dual to the theory of relative monads, we shall leave subsequent dualisations for the reader to spell out.)

\begin{manualtheorem}{\ref*{unenriched-pullback-theorem}$\co$}
    \label{unenriched-pullback-theorem-for-comonads}
    Let $i \colon Z \to U$ be a codense functor. For an $i$-comonad $D$, the category of $D$-coalgebras exhibits the following pullback of categories.
    \[\begin{tikzcd}
        {\Coalg(D)} & {[\Kl(D), \Set]\op} \\
        U & {[Z, \Set]\op}
        \arrow[hook, from=1-1, to=1-2]
        \arrow["{u_D}"', from=1-1, to=2-1]
        \arrow["{[k_D, \Set]\op}", from=1-2, to=2-2]
        \arrow[""{name=0, anchor=center, inner sep=0}, "{m_i}"', hook, from=2-1, to=2-2]
        \arrow["\lrcorner"{anchor=center, pos=0.125}, draw=none, from=1-1, to=0]
    \end{tikzcd}\]
    Above, we denote by $m_i \colon U \to [Z, \Set]\op$ the co-nerve of $i$, which sends an object $u \in \ob U$ to the restricted Yoneda embedding $z \mapsto U(u, iz)$. Furthermore, the unlabelled functor above is isomorphic to the co-nerve of the comparison functor $i_D \colon \Kl(D) \to \Coalg(D)$, exhibiting $i_D$ as a codense functor.
    \qed
\end{manualtheorem}

\begin{remark}
    \newcommand{\triangleup}{\flip\triangledown}
    When $j$ is not dense, there is still a canonical comparison functor
    \[\Alg(T) \to n_j \times_{[A\op, \Set]} [{k_T}\op, \Set]\]
    from the category of algebras into the pullback, but it is not, in general, an equivalence. For instance, denote by $V$ the freestanding span $\{ \triangledown \from \lozenge \to \triangleup \}$ and let $j \colon 1 \to V$ denote the constant functor on $\lozenge$. $j$ is not dense, since $V(\lozenge, \triangledown) \iso 1 \iso V(\lozenge, \triangleup)$ but $V(\triangledown, \triangleup) = \emptyset$. Consider $T$ the constant $j$-monad on $\triangleup$. $k_T$ is the identity on $1$, so that  ${k_T}^*$ is the identity on $[1\op, \Set] \equiv \Set$, and thus the apex of the pullback is $V$. However, $\Alg(T) \iso 1 \not\equiv V$, since there is a unique $T$-algebra (whose underlying object is $\triangleup$).

    On the other hand, there are cases in which $j$ is not dense, but the square \eqref{n_j-pullback} in \cref{unenriched-pullback-theorem} is still a pullback. For instance, suppose that $\jAE$ is \ff{}. Then the Kleisli category for $j$, viewed as a trivial $j$-monad, is isomorphic to $A$~\cite[Proposition~6.55]{arkor2024formal}, while the category of algebras is $E$~\cite[Proposition~6.42]{arkor2024formal}. Therefore, the square \eqref{n_j-pullback} trivially forms a pullback in $\Cat$, as shown below. However, $j$ need not be dense.
    \[\begin{tikzcd}
    	E & {[\Kl(j)\op, \Set]} \\
    	E & {[A\op, \Set]}
    	\arrow["{1_E}"', from=1-1, to=2-1]
    	\arrow["\iso", from=1-2, to=2-2]
    	\arrow[""{name=0, anchor=center, inner sep=0}, "{n_j}"', from=2-1, to=2-2]
    	\arrow[from=1-1, to=1-2]
    	\arrow["\lrcorner"{anchor=center, pos=0.125}, draw=none, from=1-1, to=0]
    \end{tikzcd}\]
    (For an alternative proof that the nerve theorem holds when $j = t$ and $j$ is \ff{}, observe that, in the proof of \cref{unenriched-pullback-theorem}, we then have that
    \[[A\op, \Set](E(j{-}, \ob j a'), E(j{-}, e)) \iso [A\op, \Set](A({-}, a'), E(j{-}, e)) \iso E(\ob j a', e)\]
    using \ffness{} of $j$, followed by the Yoneda lemma.)
\end{remark}

\begin{remark}
    Pullbacks of \ioo{} functors along nerves, as below, were first considered by \textcite[\S5]{linton1969outline}, who called objects of the pullback \emph{$k$-algebras (rel.\ $j$)}. Indeed, elements of the proof of \cref{unenriched-pullback-theorem} are present \loccit. However, \citeauthor{linton1969outline} had no notion of relative monad, nor of algebras thereof, to formulate the nerve theorem at this level of generality.
    \[\begin{tikzcd}
    	\cdot & {[K\op, \Set]} \\
    	E & {[A\op, \Set]}
    	\arrow[from=1-1, to=2-1]
    	\arrow["{[k\op, \Set]}", from=1-2, to=2-2]
    	\arrow[""{name=0, anchor=center, inner sep=0}, "{n_j}"', from=2-1, to=2-2]
    	\arrow[from=1-1, to=1-2]
    	\arrow["\lrcorner"{anchor=center, pos=0.125}, draw=none, from=1-1, to=0]
    \end{tikzcd}\]
    Later, \citeauthor{diers1975jmonades} and \citeauthor{lee1977relative} independently established the equivalence of the following structures~\cites[Th\'eor\`eme~2.1 \& Th\'eor\`eme~2.2]{diers1975jmonades}[Theorem~2.2.7 \& Corollary~2.2.8]{lee1977relative}.
    \begin{enumerate}
        \item Pullbacks of the form in \cref{unenriched-pullback-theorem}.
        \item Terminal resolutions of $j$-monads.
        \item Right $j$-adjoints creating $j$-absolute colimits.
    \end{enumerate}
    However, neither author had an independent definition of the category of algebras for a relative monad: \citeauthor{diers1975jmonades} took (1) to be the definition thereof, whilst \citeauthor{lee1977relative} took (2) to be the definition. Therefore, \cref{unenriched-pullback-theorem}, which adds a fourth equivalent characterisation to the list above, is entirely new.
    \begin{enumerate}[resume]
        \item Categories of algebras for $j$-monads.
    \end{enumerate}
    (In our formal setting, the equivalence between (2 \& 4) follows from the universal property of algebra objects \cite[Corollary~6.41]{arkor2024formal}, while the equivalence between (3 \& 4) follows from the relative monadicity theorem~\cite[Corollary~4.8]{arkor2024relative}.)
\end{remark}

The proof of \cref{unenriched-pullback-theorem} readily generalises, with appropriate modifications, to enriched relative monads. However, rather than give such a concrete proof, we shall derive the enriched nerve theorem (\cref{enriched-pullback-theorem}) from a more general result using formal techniques.

\subsection{Applications}

We give a number of applications of the nerve theorem. In our examples, we shall freely make use of the enriched nerve theorem, which will be proven in \cref{enriched-pullback-theorem}. For simplicity, we shall here take $\V$ to be a closed monoidal locally presentable category, while noting that the nerve theorem itself holds under much weaker assumptions on the base of enrichment $\V$.

First, we observe that the nerve theorem for relative monads does indeed generalise the classical characterisation theorem for the category of algebras for a monad.

\begin{example}
    Let $A$ be a $\V$-category admitting a presheaf $\V$-category $\P A$, and let $T$ be a $\V$-enriched monad on $A$. Taking $j = 1_A$, so that the nerve of $j$ is the Yoneda embedding, the following diagram forms a pullback. This recovers the original characterisation of \textcite[Observation~1.1]{linton1969outline}, as well as the enriched variant~\cites{linton1972algebren}[Theorem~14]{street1972formal}{linton1974relative}.
    \[\begin{tikzcd}
    	{\Alg(T)} & {\P(\Kl(T))} \\
    	A & {\P A}
    	\arrow["{u_T}"', from=1-1, to=2-1]
    	\arrow["{{k_T}^*}", from=1-2, to=2-2]
    	\arrow[""{name=0, anchor=center, inner sep=0}, "{\yo_A}"', hook, from=2-1, to=2-2]
    	\arrow[hook, from=1-1, to=1-2]
    	\arrow["\lrcorner"{anchor=center, pos=0.125}, draw=none, from=1-1, to=0]
    \end{tikzcd}\]
\end{example}

One of the motivating applications of the nerve theorem for relative monads is to the theory of algebraic theories, in the sense of \textcite[Chapter~2]{lawvere1963functorial} (\cf{}~\cite[Example~5.6]{arkor2024relative}).

\begin{example}
    \label{algebraic-theories}
    Denote by $\F$ the category of finite ordinals, which is the free category with strictly associative and unital finite coproducts on a single object $1$. The inclusion $j \colon \F \equiv \FinSet \ffto \Set$ of finite ordinals into sets exhibits a cocompletion under sifted colimits, and is hence dense and \ff{}.

    Now consider a $j$-monad $T$. The inclusion of $\bb F$ into the Kleisli category of $T$ forms an algebraic theory $k_T \colon \bb F \to \Kl(T)$, since $k_T$ is \ioo{} and preserves finite coproducts. The category of $T$-algebras forms a pullback as follows.
    \[\begin{tikzcd}
    	{\Alg(T)} & {[\Kl(T)\op, \Set]} \\
    	\Set & {[\F\op, \Set]}
    	\arrow["{u_T}"', from=1-1, to=2-1]
    	\arrow["{[k_T\op, \Set]}", from=1-2, to=2-2]
    	\arrow[""{name=0, anchor=center, inner sep=0}, "{n_j}"', hook, from=2-1, to=2-2]
    	\arrow[hook, from=1-1, to=1-2]
    	\arrow["\lrcorner"{anchor=center, pos=0.125}, draw=none, from=1-1, to=0]
    \end{tikzcd}\]
    Noting that $\Kl(T)$ has finite coproducts, the following also forms a pullback.
    \[\begin{tikzcd}
    	{\Cart[\Kl(T)\op, \Set]} & {[\Kl(T)\op, \Set]} \\
    	{\Cart[\F\op, \Set]} & {[\F\op, \Set]}
    	\arrow["{\Cart[k_T\op, \Set]}"', from=1-1, to=2-1]
    	\arrow["{[k_T\op, \Set]}", from=1-2, to=2-2]
    	\arrow[""{name=0, anchor=center, inner sep=0}, hook, from=2-1, to=2-2]
    	\arrow[hook, from=1-1, to=1-2]
    	\arrow["\lrcorner"{anchor=center, pos=0.125}, draw=none, from=1-1, to=0]
    \end{tikzcd}\]
    Moreover, since $k_T$ is \ioo{}, $[k_T\op, \Set]$ is an amnestic isofibration, and so both pullbacks are furthermore bipullbacks~\cite[Corollary~1]{joyal1993pullbacks}. By the universal property of $\F$, there is an equivalence of categories $\Cart[\F\op, \Set] \equiv \Set$ concrete over $[\F\op, \Set]$. Consequently, the category of $T$-algebras is concretely equivalent to the category of finite product-preserving functors $\Kl(T)\op \to \Set$, and thus to the category of algebras for the algebraic theory $k_T$.

    Conversely, the relative monadicity theorem establishes that the category of models for an algebraic theory is strictly $j$-monadic~\cite[Example~5.6]{arkor2024relative}. Together, these observations exhibit an equivalence between the category of algebraic theories and the category of $j$-monads, which commutes with taking categories of algebras (\cf{}~\cite[Chapter~3]{arkor2022monadic}).
\end{example}

The analysis of \cref{algebraic-theories} extends to more general notions of algebraic theory; this perspective will be explicated in future work (\cf{}~\cites[Chapter~7]{arkor2022monadic}[]{arkor2022relative}).

Another useful application of the nerve theorem is in establishing properties of the categories of algebras for relative monads: for instance, local presentability.

\begin{example}
    \label{LP}
    Let $\jAE$ be a dense $\V$-functor with small domain and locally presentable codomain. The nerve $n_j \colon E \to \P A$ has a left adjoint, since $E$ is small-cocomplete. Therefore, for every $j$-monad $T$, each of the $\V$-functors appearing in the cospan of the pullback of $\V$-categories below is a right adjoint between locally presentable $\V$-categories.
    \[\begin{tikzcd}
    	{\Alg(T)} & {\P(\Kl(T))} \\
    	E & {\P A}
    	\arrow["{u_T}"', from=1-1, to=2-1]
    	\arrow["{{k_T}^*}", from=1-2, to=2-2]
    	\arrow[""{name=0, anchor=center, inner sep=0}, "{n_j}"', hook, from=2-1, to=2-2]
    	\arrow[hook, from=1-1, to=1-2]
    	\arrow["\lrcorner"{anchor=center, pos=0.125}, draw=none, from=1-1, to=0]
    \end{tikzcd}\]
    Consequently, by \cite[Theorem~6.11]{bird1984limits}, the $\V$-category of $T$-algebras is locally presentable, and each of the functors above is a right adjoint. Furthermore, it follows from \cite[Proposition~4.12]{arkor2024relative} that $u_T$ is strictly monadic.

    In particular, if $A$ is $\kappa$-cocomplete, for some regular cardinal $\kappa$, and $j$ preserves $\kappa$-small colimits, then $n_j$ is $\kappa$-accessible (\ie{} preserves $\kappa$-filtered colimits), in which case $\Alg(T)$ is locally $\kappa$-presentable and each of the functors above is $\kappa$-accessible~\cite[Theorem~6.10]{bird1984limits}.
\end{example}

An immediate consequence of \cref{LP} is a simple proof that categories of algebras for accessible monads on locally presentable categories are themselves locally presentable~\cite{gabriel1971lokal,ulmer1971locally}.

\begin{example}
    Let $\kappa$ be a regular cardinal and let $T$ be a $\kappa$-accessible monad on a locally $\kappa$-presentable $\V$-category $E$. Denote by $j \colon A \to E$ the full subcategory of $\kappa$-presentable objects in $E$. By \cite[Example~4.8]{arkor2024formal}, the forgetful $\V$-functor $u_T \colon \Alg(T) \to E$ is strictly $j$-monadic. Thus, by \cref{LP}, $\Alg(T)$ is locally $\kappa$-presentable and $u_T$ is $\kappa$-accessible.
\end{example}

\begin{example}
    \label{soundness}
    \Cref{algebraic-theories} and \cref{LP} (for fixed $\kappa$) are instances of a more general phenomenon that holds for any class of weights that is \emph{sound} in the sense of \cite{adamek2002classification,lack2011notions}. Let $\Psi$ be a class of small weights and let $\Psi\flatw$ be the class of $\Psi$-flat weights~\cite{kelly2005notes}. We say that $\Psi$ is \emph{sound} if, for every $\Psi$-cocomplete $\V$-category $A$, the $\V$-category of $\Psi$-exact $\V$-presheaves $\P_\Psi(A)$ is equivalent to the free cocompletion $\Psi\flatw(A)$ of $A$ under $\Psi\flatw$-weighted colimits.

    Let $A$ be a small $\Psi$-cocomplete $\V$-category, and let $E$ be a \emph{locally $\Psi$-presentable $\V$-category}, \ie{} the free $\Psi\flatw$-cocompletion of a small $\Psi$-cocomplete $\V$-category. Then, for any dense $\V$-functor $\jAE$, the $\V$-category of algebras for any $j$-monad is also locally $\Psi$-presentable. We briefly sketch the proof idea; the full details will appear elsewhere.

    Let $T$ be a $j$-monad. The nerve theorem implies that the following diagram is a pullback of $\V$-categories; since ${k_T}^*$ is an amnestic isofibration, it is also a bipullback~\cite[Corollary~1]{joyal1993pullbacks}.
    \[\begin{tikzcd}
    	{\Alg(T)} & {\P(\Kl(T))} \\
    	E & {\P A}
    	\arrow["{u_T}"', from=1-1, to=2-1]
    	\arrow["{{k_T}^*}", from=1-2, to=2-2]
    	\arrow[""{name=0, anchor=center, inner sep=0}, "{n_j}"', hook, from=2-1, to=2-2]
    	\arrow[hook, from=1-1, to=1-2]
    	\arrow["\lrcorner"{anchor=center, pos=0.125}, draw=none, from=1-1, to=0]
    \end{tikzcd}\]
    By analogous arguments to \cref{LP}, $n_j$ and ${k_T}^*$ preserve $\Psi$-flat colimits and admit left adjoints. Thus, it suffices to show that the 2-category of locally $\Psi$-presentable $\V$-categories and $\Psi\flatw$-cocontinuous right-adjoint functors admits bipullbacks, which will be preserved by the forgetful 2-functor into $\VCat$. By a duality theorem for $\Psi$-complete $\V$-categories analogous to that of \cite[Theorem~3.1]{centazzo2002duality}, this 2-category is dually biequivalent to the 2-category of small $\Psi$-complete $\V$-categories admitting absolute limits, which is the 2-category of algebras for an accessible 2-monad on $\VCat$, and hence admits bicolimits~\cite[Theorem~5.8]{blackwell1989two}. Since bilimits are preserved by biequivalence, the 2-category of locally $\Psi$-presentable $\V$-categories consequently admits bipullbacks, and thus the $\V$-category of $T$-algebras in the pullback above is locally $\Psi$-presentable.

    In particular, when $\V = \Set$ and $\Psi$ is the class of finite discrete weights, this recovers \cref{algebraic-theories}; and, when $\Psi$ is the class of $\kappa$-small weights, recovers \cref{LP}.
\end{example}

Next, we give an application of the nerve theorem to colimits of relative monads. Recall from \cite[Corollary~6.40]{arkor2024formal} that the assignment taking each $j$-monad $T$ to its forgetful functor $u_T \colon \Alg(T) \to E$ forms a \ff{} functor $u_{({-})} \colon \RMnd(j)\op \to \Cat/E$. We extend the terminology of \cite[\S26]{kelly1980unified} from monads to relative monads.

\begin{definition}
    Let $\jAE$ and $T_{\ph} \colon I \to \RMnd(j)$ be functors.
    A colimit of $T$ is \emph{algebraic} if it is preserved by $u_{({-})}\op \colon \RMnd(j) \to (\Cat/E)\op$.
\end{definition}

In other words, given some functor $T \colon I \to \RMnd(j)$ admitting a colimit $\colim_{i \in I} T_i$, the colimit is algebraic if it is sent by $u_{({-})}$ to a limit in $\Cat/E$. The following then generalises \cite[Proposition~26.4]{kelly1980unified} from monads to relative monads with dense roots.

\begin{proposition}
    \label{algebraic-colimit-iff-left-j-adjoint}
    Let $\jAE$ be a dense functor with small domain. A diagram ${T \colon I \to \RMnd(j)}$ of $j$-monads admits an algebraic colimit if and only if the limit $\lim_{i \in \ob I} u_{T_i}$ in $\Cat/E$ admits a left $j$-adjoint.
\end{proposition}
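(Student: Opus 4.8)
The plan is to reduce the statement, by means of the fully faithful embedding $u_{({-})} \colon \RMnd(j)\op \to \Cat/E$ of \cite[Corollary~6.40]{arkor2024formal} and the relative monadicity theorem \cite[Corollary~4.8]{arkor2024relative}, to the assertion that the limit $\lim_{i \in \ob I} u_{T_i}$ always creates $j$-absolute colimits; this mirrors Kelly's proof of \cite[Proposition~26.4]{kelly1980unified} in the non-relative case.

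First I would reformulate algebraicity in terms of the essential image of $u_{({-})}$. Since $\Cat/E$ is complete, the limit $L \defeq \lim_{i \in \ob I} u_{T_i}$ exists there; write $u \colon L \to E$ for the corresponding object of $\Cat/E$. A fully faithful functor reflects limits, and in fact creates every limit whose apex lies in its essential image; applying this to $u_{({-})}$, together with the definition of algebraic colimit, yields that $T$ admits an algebraic colimit if and only if $u$ lies in the essential image of $u_{({-})}$ -- equivalently, if and only if $u$ is $j$-monadic. (In one direction, algebraicity says exactly that $u_{({-})}$ carries the colimiting cocone in $\RMnd(j)$ to the limit cone in $\Cat/E$, whose apex is $u$; in the other, an isomorphism $u \iso u_S$ over $E$ with $S$ a $j$-monad transports back, by fully faithfulness, to a colimiting cocone $T_i \to S$ that $u_{({-})}$ preserves.)

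By the relative monadicity theorem, $u$ is $j$-monadic precisely when it is a right $j$-adjoint creating $j$-absolute colimits. It therefore suffices to show that $u \colon L \to E$ creates $j$-absolute colimits unconditionally, for then $u$ is $j$-monadic if and only if it admits a left $j$-adjoint, which is the claim. Here I would unwind the limit $L$ in $\Cat/E$: it is (a wide pullback over $E$ of) limits of the $\Alg(T_i)$ in $\Cat$, so that $u$ factors through each projection $p_i \colon L \to \Alg(T_i)$ as $u = u_{T_i}\, p_i$. Each $u_{T_i}$ creates $j$-absolute colimits, being $j$-monadic. Given a diagram in $L$ whose composite to $E$ has a $j$-absolute colimit, applying $p_i$ and using creation by $u_{T_i}$ produces, for each $i$, a unique colimiting cocone in $\Alg(T_i)$ lying over the colimiting cocone in $E$; uniqueness forces these cocones to be strictly compatible with the connecting functors $\Alg(T_{i'}) \to \Alg(T_i)$ and with the projections $p_i$, hence to assemble into a cocone in $L$, which one checks is the required creation.

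The step I expect to be the main obstacle is this last one: identifying the exact limit computed in $\Cat/E$ (in particular treating a non-connected indexing category $I$, where $L$ is a genuine wide pullback over $E$ rather than a plain limit), unpacking the universal property of $j$-absolute weighted colimits in the formal setting, and checking that \emph{creation} of such colimits -- not merely their preservation -- descends to the limit. Everything else is formal, resting only on the two cited results and the standard fact that fully faithful functors create limits valued in their essential image.
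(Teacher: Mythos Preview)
Your reduction---show that $u \defeq \lim_i u_{T_i}$ creates $j$-absolute colimits, then invoke relative monadicity---matches the paper's overall strategy, and your reformulation of algebraicity via the essential image of $u_{({-})}$ is correct. The difference lies in how creation is established. You propose to argue directly: project along each $p_i$, lift the colimit using creation by $u_{T_i}$, and reassemble via the universal property of the limit in $\Cat/E$. This works, and the compatibility checks you flag as the main obstacle are routine (uniqueness of the lift in each $\Alg(T_i)$ forces the connecting functors over $E$ to preserve the lifted colimit cocones, and the assembled cocone in $L$ is colimiting because the projections, together with $u$, jointly detect it). The paper instead deploys the nerve theorem: each $u_{T_i}$ is the pullback $n_j \times_{[A\op,\Set]} [k_{T_i}\op, \Set]$; since $[\ph\op,\Set] \colon A/\Cat \to (\Cat/[A\op,\Set])\op$ is a left adjoint and pullback along $n_j$ is a right adjoint, the limit $u$ is identified with the single pullback $n_j \times_{[A\op,\Set]} [k\op,\Set]$ where $k = \colim_i k_{T_i}$ in $A/\Cat$. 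Creation of $j$-absolute colimits is then read off from Schubert's lemma on creation by pullbacks, using that $k$ is \ioo{} so that $[k\op,\Set]$ strictly creates all colimits. Your route avoids the nerve theorem entirely and is in that sense more elementary; the paper's route showcases its main theorem and yields, as a by-product, an explicit description of $L$ as those presheaves on $\colim_i \Kl(T_i)$ whose restriction along $k$ lies in the image of $n_j$, which your argument does not extract.
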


\begin{proof}
  Denote by $u \colon D \to E$ the limit $\lim_{i \in \ob I} u_{T_i}$.
  The functor $u_{({-})} \colon \RMnd(j)\op \to \Cat/E$ reflects limits because it is \ff{}, so it is enough to show that, if $u$ admits a left $j$-adjoint, then it exhibits the category of algebras for the induced $j$-monad.

  Denote by $k \colon A \to B$ the colimit $\colim_{i \in \ob I} k_{T_i}$ in $A/\Cat$. The functor \[[\ph\op, \Set] \colon A/\Cat \to (\Cat/[A\op, \Set])\op\] admits a right adjoint, and hence preserves colimits: this follows from the ``preliminary \ssa{}'' of \cite[Theorem~3.1]{linton1969outline} relative to the Yoneda embedding $\yo_A \colon A \to [A\op, \Set]$, observing that $A\op/\Cat \iso A/\Cat$.
  We thus have that $[(\colim_{i \in \ob I} k_{T_i})\op, \Set]$ exhibits the limit $\lim_{i \in \ob I} [{k_{T_i}}\op, \Set]$ in $\Cat/[A\op, \Set]$. Now, for each $i \in \ob I$, the nerve theorem implies that the following square is a pullback of categories.
\[\begin{tikzcd}
	{\Alg(T_i)} & {[\Kl(T_i)\op, \Set]} \\
	E & {[A\op, \Set]}
	\arrow["{u_{T_i}}"', from=1-1, to=2-1]
	\arrow["{[k_{T_i}\op, \Set]}", from=1-2, to=2-2]
	\arrow[""{name=0, anchor=center, inner sep=0}, "{n_j}"', hook, from=2-1, to=2-2]
	\arrow[hook, from=1-1, to=1-2]
	\arrow["\lrcorner"{anchor=center, pos=0.125}, draw=none, from=1-1, to=0]
\end{tikzcd}\]
  Since the pullback functor $n_j \times_{[A\op, \Set]} \ph \colon \Cat/[A\op, \Set] \to \Cat/E$ is right-adjoint (to postcomposition by $n_j$), it preserves limits, so that the following square also exhibits a pullback of categories.
  \begin{equation}
  \label{limit-pullback}
\begin{tikzcd}
	D & {[B\op, \Set]} \\
	E & {[A\op, \Set]}
	\arrow["{u = \lim_{i \in \ob I} u_{T_i}}"', from=1-1, to=2-1]
	\arrow["{\lim_{i \in \ob I} [{k_{T_i}}\op, \Set] = [k\op, \Set]}", from=1-2, to=2-2]
	\arrow[""{name=0, anchor=center, inner sep=0}, "{n_j}"', hook, from=2-1, to=2-2]
	\arrow[hook, from=1-1, to=1-2]
	\arrow["\lrcorner"{anchor=center, pos=0.125}, draw=none, from=1-1, to=0]
\end{tikzcd}
\end{equation}
  $k$ is \ioo{}, so that $[k\op, \Set]$ strictly creates colimits. Thus $u$ strictly creates those colimits preserved by $n_j$, \ie{} the $j$-absolute colimits, by \cite[Proposition~21.7.2(c)]{schubert1972categories}. $u$ is therefore $j$-monadic if and only if it admits a left $j$-adjoint by \cite[Proposition~4.12]{arkor2024relative}.
\end{proof}

\begin{example}
    Let $\jAE$ be a dense functor with small domain and locally presentable codomain. Then each of the functors appearing in the cospan of the pullback \eqref{limit-pullback} is a right adjoint between locally presentable categories, so that $u$ is also right adjoint by \cite[Theorem~6.11]{bird1984limits}. Thus $\RMnd(j)$ admits all small algebraic colimits.
\end{example}

As a final example, we explain how the nerve theorem for relative monads relates to the nerve theorems for monads arising in the theory of monads with arities~\cite{weber2007familial} and nervous monads~\cite{bourke2019monads}.

\begin{example}[Nerve theorems for monads]
    \label{nerve-theorems}
    Let $\jAE$ be a dense $\V$-functor with small domain and let $T$ be a $\V$-enriched monad on $E$. Precomposition by $j$ defines a $\V$-enriched $j$-monad $(j \d T)$~\cite[Proposition~5.36]{arkor2024formal}. The nerve theorem for relative monads implies that the following diagram is a pullback of $\V$-categories.
    \[\begin{tikzcd}
    	{\Alg(j \d T)} & {\P(\Kl(j \d T))} \\
    	E & {\P A}
    	\arrow["{u_{j \d T}}"', from=1-1, to=2-1]
    	\arrow["{{k_{j \d T}}^*}", from=1-2, to=2-2]
    	\arrow[""{name=0, anchor=center, inner sep=0}, "{n_j}"', hook, from=2-1, to=2-2]
    	\arrow[hook, from=1-1, to=1-2]
    	\arrow["\lrcorner"{anchor=center, pos=0.125}, draw=none, from=1-1, to=0]
    \end{tikzcd}\]
    The Kleisli $\V$-category for $(j \d T)$ is given by the full image of $(j \d k_T)$, \ie{} the full subcategory of $\Kl(T)$ spanned by the objects of $A$.
    \[\begin{tikzcd}
    	{\Kl(j \d T)} & {\Kl(T)} \\
    	A & E
    	\arrow["{k_T}"', two heads, from=2-2, to=1-2]
    	\arrow["j"', from=2-1, to=2-2]
    	\arrow["{k_{j \d T}}", two heads, from=2-1, to=1-1]
    	\arrow[hook, from=1-1, to=1-2]
    \end{tikzcd}\]
    Now, assuming that $u_{j \d T}$ admits a left adjoint, \cite[Proposition~4.12]{arkor2024relative} implies that $u_{j \d T}$ is strictly monadic, and so the $\V$-category of $T$-algebras is isomorphic over $E$ to the $\V$-category of $(j \d T)$-algebras.
    \[\begin{tikzcd}
    	{\Alg(T)} && {\Alg(j \d T)} \\
    	& E
    	\arrow["\iso", from=1-1, to=1-3]
    	\arrow["{u_T}"', from=1-1, to=2-2]
    	\arrow["{u_{j \d T}}", from=1-3, to=2-2]
    \end{tikzcd}\]
    Monads satisfying this condition are called \emph{$j$-ary} in \cite[Definition~5.4.1]{arkor2022monadic}. In particular, monads with arities $j$~\cite[Definition~4.1]{weber2007familial} and $j$-nervous monads~\cite[Definition~17]{bourke2019monads} both satisfy the given assumptions, and are hence $j$-ary. We thereby recover the nerve theorems of \cites[Theorem~4.10]{weber2007familial}[\S4]{mellies2010segal}[Theorem~1.10]{berger2012monads}, and the characterisations of categories of algebras for nervous monads in \cites[Theorem~18]{bourke2019monads}[Proposition~4.10]{lucyshyn2023enriched}. In future work, we shall study the notion of $j$-ary monad in detail to clarify the precise relationship between these concepts (\cf{}~\cite[\S5.4]{arkor2022monadic}).
\end{example}

\section{Formal categorical prerequisites}
\label{prerequisites}

We shall work throughout the remainder of paper in the context of a \ve{} $\X$, adopting the terminology and notation of \cite{arkor2024formal}.
In this section, we shall briefly recall the fundamental concepts of formal category theory in a \ve{}.
A more detailed introduction may be found in \S2~--~\S3 of \cite{arkor2024formal}.

A \ve{} is, in particular, a \vdc{}, which is a two-dimensional structure having \emph{objects}; \emph{tight-cells} ($\to$); \emph{loose-cells} ($\lto$); and \emph{2-cells} ($\tto$) of the following form (\cf~\cite[Definition~2.1]{arkor2024formal}).
\[\begin{tikzcd}
    {A_0} & {A_1} & \cdots & {A_{n - 1}} & {A_n} \\
    {B_0} &&&& {B_n}
    \arrow["q", "\shortmid"{marking}, from=2-5, to=2-1]
    \arrow["{p_n}"', "\shortmid"{marking}, from=1-5, to=1-4]
    \arrow["{p_1}"', "\shortmid"{marking}, from=1-2, to=1-1]
    \arrow["{p_{n - 1}}"', "\shortmid"{marking}, from=1-4, to=1-3]
    \arrow["{p_2}"', "\shortmid"{marking}, from=1-3, to=1-2]
    \arrow[""{name=0, anchor=center, inner sep=0}, "{f_n}", from=1-5, to=2-5]
    \arrow[""{name=1, anchor=center, inner sep=0}, "{f_0}"', from=1-1, to=2-1]
    \arrow["\phi"{description}, draw=none, from=0, to=1]
\end{tikzcd}\]
For instance, the \vdc{} $\VCat$ has as objects the (possibly large) $\V$-categories; as tight-cells the $\V$-functors; as loose-cells the $\V$-distributors; and as 2-cells the $\V$-natural transformations, which are families of morphisms
\[\phi_{x_0, \ldots, x_n} \colon p_1(x_0, x_1) \otimes \cdots \otimes p_n(x_{n - 1}, x_n) \to q(\ob{f_0} x_0, \ob{f_n} x_n)\]
in $\V$ for each $x_0 \in \ob{A_0}, \dots, x_n \in \ob{A_n}$, satisfying several naturality laws~\cite[Definition~8.1]{arkor2024formal}. For tight-cells $f \colon A \to B$ and $g \colon B \to C$, we denote by $(f \d g) \colon A \to C$ or $g f \colon A \to C$ their composite. For each pair of objects $A$ and $B$ in $\X$, the category of loose-cells $A \lto B$ and globular 2-cells is denoted by $\X\lh{A, B}$, while the set of tight-cells $A \to B$ is denoted by $\X[A, B]$.

A \ve{} is a \vdc{} for which every object $A$ admits a loose-identity $A(1, 1) \colon A \lto A$ (\cite[Definition~2.4]{arkor2024formal}); and for which, for every loose-cell $p \colon X \lto Y$ and tight-cells $g \colon W \to X$ and $f \colon Z \to Y$, there is a restriction loose-cell $p(f, g) \colon W \lto Z$ (\cite[Definition~2.7]{arkor2024formal}). We denote the restriction $A(1, 1)(f, g)$ along a loose-identity $A(1, 1)$ by $A(f, g)$: in $\VCat$, this restriction is given by the hom-objects of $A$. Finally, for each tight-cell $f \colon A \to B$, we denote by $\pc f \colon A(1, 1) \tto B(f, f)$ and by $\cp f \colon B(1, f), B(f, 1) \tto A(1, 1)$ the unit and counit respectively of the loose-adjunction $B(1, f) \adj B(f, 1)$~\cite[Notation~2.8]{arkor2024formal}. In a \ve{}, we may consider 2-cells between tight-cells, and consequently each set of tight-cells $\X[A, B]$ extends to a category. A virtual equipment is representable if it admits composite of loose-cells; a representable virtual equipment is equivalently a pseudo double category with companions and conjoints.

\subsection{Relative monads, opalgebra objects and algebra objects}

We recall the concepts introduced in \cite{arkor2024formal} that are most relevant to our study of the nerve theorem. The central concept is that of a relative monad, which is a generalisation of a monad that permits the carrier to be an arbitrary tight-cell, rather than an endo-tight-cell. Accordingly, a relative monad is parameterised by a tight-cell $\jAE$ that plays the same role that the identity tight-cell plays for a non-relative monad.

\begin{definition}[{\cite[Definition~4.1]{arkor2024formal}}]
    \label{relative-monad}
    A \emph{relative monad} comprises
    \begin{enumerate}
        \item a tight-cell $\jAE$, the \emph{root};
        \item a tight-cell $t \colon A \to E$, the \emph{carrier};
        \item a 2-cell $\dag \colon E(j, t) \tto E(t, t)$, the \emph{extension operator};
        \item a 2-cell $\eta \colon j \tto t$, the \emph{unit},
    \end{enumerate}
    satisfying associativity and left- and right-unit laws.
    A \emph{$j$-relative monad} (or simply \emph{$j$-monad}) is a relative monad with root $j$.
\end{definition}

Relative monads admit notions of algebra and algebra morphism analogous to those for non-relative monads. However, it is important to observe that, in the context of a \ve{}, the appropriate notion of morphism of algebras is graded by a chain of loose-cells. This permits the consideration of morphisms between algebras with different domains, in contrast to morphisms of algebras in a 2-category~\cite{street1972formal}.

\begin{definition}[{\cite[Definitions~6.1 \& 6.29]{arkor2024formal}}]
	\label{algebra}
    Let $T$ be a relative monad. An \emph{algebra for $T$} (or simply \emph{$T$-algebra}) comprises
    \begin{enumerate}
		\item an object $D$, the \emph{domain};
        \item a tight-cell $e \colon D \to E$, the \emph{carrier};
        \item a 2-cell $\aop \colon E(j, e) \tto E(t, e)$, the \emph{extension operator},
    \end{enumerate}
    satisfying compatibility laws with the extension operator $\dag$ and unit $\eta$ for $T$.

    Let $(e \colon D \to E, \aop)$ and $(e' \colon D' \to E, \aop')$ be $T$-algebras. A \emph{$(p_1, \ldots, p_n)$-graded $T$-algebra morphism} from $(e, \aop)$ to $(e', \aop')$ is a 2-cell \[\epsilon \colon E(1, e), p_1, \ldots, p_n \tto E(1, e')\] satisfying a compatibility law with the two extension operators $\aop$ and $\aop'$.
\end{definition}

\begin{remark}[{\cite[Remark~6.30]{arkor2024formal}}]
    \label{alternative-graded-morphism}
    A $(p_1, \ldots, p_n)$-graded $T$-algebra morphism from $(e, \aop)$ to $(e', \aop')$ is equivalently a 2-cell \[\epsilon \colon p_1, \ldots, p_n \tto E(e, e')\]
    satisfying a compatibility law with the two extension operators $\aop$ and $\aop'$. We shall use the term \emph{algebra morphism} to refer to both forms of 2-cell interchangeably.
\end{remark}

An algebra object for a relative monad $T$ is a universal $T$-algebra. In $\VCat$, this corresponds to the $\V$-category of $T$-algebras~\cite[\S8.3]{arkor2024formal}.

\begin{definition}[{\cite[Definition~6.33]{arkor2024formal}}]
	\label{algebra-object}
	Let $T$ be a relative monad. A $T$-algebra $(u_T \colon \Alg(T) \to E, \aop_T)$ is an
	\emph{algebra object} for $T$ when
	\begin{enumerate}
		\item \label{algebra-object-tight-cell-UP} for every $T$-algebra $(e \colon D \to E, \aop)$, there is a
		unique tight-cell $\unit_{(e, \aop)} \colon D \to \Alg(T)$ such that $\unit_{(e, \aop)} \d u_T = e$ and $\aop_T(1, \unit_{(e, \aop)}) = \aop$;
		\item \label{algebra-object-2-cell-UP} for every graded $T$-algebra morphism $\epsilon \colon E(1, e), p_1, \dots, p_n \tto E(1, e')$, there is a unique 2-cell $\unit_\epsilon \colon \Alg(T)(1, \unit_{(e, \aop)}), p_1, \dots, p_n \tto \Alg(T)(1, \unit_{(e', \aop')})$ such that:
		\[
		\epsilon~=~
		\begin{tikzcd}
			E & D & \cdots & {D'} \\
			E &&& {D'}
			\arrow[""{name=0, anchor=center, inner sep=0}, Rightarrow, no head, from=1-4, to=2-4]
			\arrow["{p_n}"', "\shortmid"{marking}, from=1-4, to=1-3]
			\arrow["{p_1}"', "\shortmid"{marking}, from=1-3, to=1-2]
			\arrow["{E(1, e)}"', "\shortmid"{marking}, from=1-2, to=1-1]
			\arrow["{E(1, e')}", "\shortmid"{marking}, from=2-4, to=2-1]
			\arrow[""{name=1, anchor=center, inner sep=0}, Rightarrow, no head, from=1-1, to=2-1]
			\arrow["{E(1, u_T), \unit_\epsilon}"{description}, draw=none, from=0, to=1]
		\end{tikzcd}
		\]
        \qedhere
	\end{enumerate}
\end{definition}

\begin{definition}[{\cite[Definitions~6.4 \& 6.43]{arkor2024formal}}]
	\label{opalgebra}
    Let $T$ be a relative monad. An \emph{opalgebra for $T$} (or simply \emph{$T$-opalgebra}) comprises
    \begin{enumerate}
		\item an object $B$, the \emph{codomain};
        \item a tight-cell $a \colon A \to B$, the \emph{carrier} or \emph{underlying tight-cell};
        \item a 2-cell $\oop \colon E(j, t) \tto B(a, a)$, the \emph{extension operator},
    \end{enumerate}
    satisfying compatibility laws with the extension operator $\dag$ and unit $\eta$ for $T$.

    Let $(a \colon A \to B, \oop)$ and $(a' \colon A \to B', \oop')$ be $T$-opalgebras. A \emph{$(p_1, \dots, p_n)$-graded $T$-opalgebra morphism} from $(a, \oop)$ to $(a', \oop')$ is a 2-cell
    \[\alpha \colon p_1, \ldots, p_n, B(1, a) \tto B'(1, a')\]
    satisfying a compatibility law with the two extension operators $\oop$ and $\oop'$.
\end{definition}

An opalgebra object for a relative monad $T$ is a universal $T$-opalgebra. In $\VCat$, this corresponds to the Kleisli $\V$-category~\cite[\S8.4]{arkor2024formal}.

\begin{definition}[{\cite[Definition~6.45]{arkor2024formal}}]
	\label{opalgebra-object}
	Let $T$ be a relative monad. A $T$-opalgebra $(k_T \colon A \to \Opalg(T), \oop_T)$ is called an
	\emph{opalgebra object} for $T$ when
	\begin{enumerate}
		\item for every $T$-opalgebra $(a \colon A \to B, \oop)$, there is a unique tight-cell $[]_{(a, \oop)} \colon \Opalg(T) \to B$ such that $k_T \d []_{(a, \oop)} = a$ and $\oop_T \d []_{(a, \oop)} = \oop$;
		\item for every graded $T$-opalgebra morphism $\alpha \colon p_1, \dots, p_n, B(1, a) \tto B'(1, a')$, there is a unique 2-cell $[]_\alpha \colon p_1, \dots, p_n, B(1, []_{(a, \oop)}) \tto B'(1, []_{(a', \oop')})$ such that:
		\[
		\alpha~=~
		\begin{tikzcd}
			{B'} & \cdots & B & A \\
			{B'} &&& A
			\arrow["{B(1, a)}"', "\shortmid"{marking}, from=1-4, to=1-3]
			\arrow[""{name=0, anchor=center, inner sep=0}, Rightarrow, no head, from=1-4, to=2-4]
			\arrow["{B'(1, a')}", "\shortmid"{marking}, from=2-4, to=2-1]
			\arrow[""{name=1, anchor=center, inner sep=0}, Rightarrow, no head, from=1-1, to=2-1]
			\arrow["{p_n}"', "\shortmid"{marking}, from=1-3, to=1-2]
			\arrow["{p_1}"', "\shortmid"{marking}, from=1-2, to=1-1]
			\arrow["{[]_\alpha(1, k_T)}"{description}, draw=none, from=0, to=1]
		\end{tikzcd}
		\]
  \end{enumerate}
\end{definition}

For every \ve{} $\X$, there is a dual \ve{} $\X\co$. Results about relative comonads and their coalgebras will consequently follow from the results in this paper by duality~\cite[\S7]{arkor2024formal}; we shall not state these dual formulations explicitly.

\subsection{Loose-monads}

A loose-monad on an object $A$ in a \vdc{} is a monoid in the multicategory of loose-cells $A \lto A$. Loose-monads in $\VCat$ have also been called \emph{promonads}, \emph{profunctor monads}, or \emph{arrows} (\cf~\cref{arrows}). Loose-monads in a given \vdc{} form a \vdc{}.

\begin{definition}[{\cite[Definition~5.3.1]{leinster2004higher}}]
	\label{LMnd}
    Let $\X$ be a \vdc{}. The \vdc{} $\LMndX$ is defined as follows.
    \begin{enumerate}
        \item An object is a \emph{loose-monad} in $\X$, comprising an object $A \in \X$, a loose-cell $T \colon A \lto A$, and 2-cells
        \[
        \begin{tikzcd}
            A & A & A \\
            A && A
            \arrow[""{name=0, anchor=center, inner sep=0}, Rightarrow, no head, from=1-1, to=2-1]
            \arrow["T"', "\shortmid"{marking}, from=1-2, to=1-1]
            \arrow["T"', "\shortmid"{marking}, from=1-3, to=1-2]
            \arrow[""{name=1, anchor=center, inner sep=0}, Rightarrow, no head, from=1-3, to=2-3]
            \arrow["T", "\shortmid"{marking}, from=2-3, to=2-1]
            \arrow["\mu"{description}, draw=none, from=1, to=0]
        \end{tikzcd}
        \hspace{8em}
        \begin{tikzcd}
            A & A \\
            A & A
            \arrow[""{name=0, anchor=center, inner sep=0}, Rightarrow, no head, from=1-1, to=2-1]
            \arrow[Rightarrow, no head, from=1-2, to=1-1]
            \arrow[""{name=1, anchor=center, inner sep=0}, Rightarrow, no head, from=1-2, to=2-2]
            \arrow["T", "\shortmid"{marking}, from=2-2, to=2-1]
            \arrow["\eta"{description}, draw=none, from=1, to=0]
        \end{tikzcd}
        \]
        subject to the following laws.
        \begin{align*}
            (\mu, 1_A) \d \mu & = (1_A, \mu) \d \mu &
            (\eta, 1_A) \d \mu & = 1_A &
            (1_A, \eta) \d \mu & = 1_A
        \end{align*}
        \item A tight-cell from $(A, S, \mu_A, \eta_A)$ to $(B, T, \mu_B, \eta_B)$ is a \emph{loose-monad morphism}, comprising a tight-cell $f \colon A \to B$ and a 2-cell
        \[\begin{tikzcd}
            A & A \\
            B & B
            \arrow[""{name=0, anchor=center, inner sep=0}, "f"', from=1-1, to=2-1]
            \arrow["S"', "\shortmid"{marking}, from=1-2, to=1-1]
            \arrow[""{name=1, anchor=center, inner sep=0}, "f", from=1-2, to=2-2]
            \arrow["T", "\shortmid"{marking}, from=2-2, to=2-1]
            \arrow["\phi"{description}, draw=none, from=1, to=0]
        \end{tikzcd}\]
        subject to the following laws.
        \begin{align*}
            \mu_A \d f & = (f, f) \d \mu_B &
            \eta_A \d f & = 1_f \d \eta_B
        \end{align*}
        \item A loose-cell is a \emph{loose-monad module}, comprising a loose-cell $p \colon A \lto B$ and 2-cells
        \[
        \begin{tikzcd}
            B & B & A \\
            B && A
            \arrow[""{name=0, anchor=center, inner sep=0}, Rightarrow, no head, from=1-1, to=2-1]
            \arrow["T"', "\shortmid"{marking}, from=1-2, to=1-1]
            \arrow["p"', "\shortmid"{marking}, from=1-3, to=1-2]
            \arrow[""{name=1, anchor=center, inner sep=0}, Rightarrow, no head, from=1-3, to=2-3]
            \arrow["p", "\shortmid"{marking}, from=2-3, to=2-1]
            \arrow["\lambda"{description}, draw=none, from=1, to=0]
        \end{tikzcd}
        \hspace{4em}
        \begin{tikzcd}
            B & A & A \\
            B && A
            \arrow[""{name=0, anchor=center, inner sep=0}, Rightarrow, no head, from=1-1, to=2-1]
            \arrow["p"', "\shortmid"{marking}, from=1-2, to=1-1]
            \arrow["S"', "\shortmid"{marking}, from=1-3, to=1-2]
            \arrow[""{name=1, anchor=center, inner sep=0}, Rightarrow, no head, from=1-3, to=2-3]
            \arrow["p", "\shortmid"{marking}, from=2-3, to=2-1]
            \arrow["\rho"{description}, draw=none, from=1, to=0]
        \end{tikzcd}
        \]
        subject to the following laws.
        \begin{align*}
            (\mu_B, 1_p) \d \lambda & = (1_B, \lambda) \d \lambda &
            (\eta_B, 1_p) \d \lambda & = 1_p \\
            (1_p, \mu_A) \d \rho & = (\rho, 1_A) \d \rho &
            (1_p, \eta_A) \d \rho & = 1_p \\
            (\lambda, 1_A) \d \rho & = (1_B, \rho) \d \lambda
        \end{align*}
        \item A 2-cell is \emph{loose-monad transformation}, comprising a 2-cell in $\X$
        \[\begin{tikzcd}
            {A_0} & \cdots & {A_n} \\
            {B_0} && {B_n}
            \arrow[""{name=0, anchor=center, inner sep=0}, "{f_0}"', from=1-1, to=2-1]
            \arrow["{p_1}"', "\shortmid"{marking}, from=1-2, to=1-1]
            \arrow["{p_n}"', "\shortmid"{marking}, from=1-3, to=1-2]
            \arrow[""{name=1, anchor=center, inner sep=0}, "{f_n}", from=1-3, to=2-3]
            \arrow["q", "\shortmid"{marking}, from=2-3, to=2-1]
            \arrow["\psi"{description}, draw=none, from=1, to=0]
        \end{tikzcd}\]
        subject to the following laws.\footnotemark{}
        \begin{align*}
            (\phi_0, \psi) \d \lambda_p & = (\psi, \phi_n) \d \rho_q \tag{$n = 0$} \\
            (\lambda_{p_1}, 1_{p_2}, \ldots, 1_{p_n}) \d \psi & = (\phi_0, \psi) \d \lambda_q \tag{$n \ge 1$} \\
            (1_{p_1}, \ldots, 1_{p_{n - 1}}, \rho_{p_n}) \d \psi & = (\psi, \phi_n) \d \rho_q \tag{$n \ge 1$} \\
            (1_{p_1}, \ldots, \lambda_{p_{i + 1}}, \ldots, 1_{p_n}) \d \psi & = (1_{p_1}, \ldots, \rho_{p_i}, \ldots, 1_{p_n}) \d \psi \tag{$1 \leq i < n$}
        \end{align*}
        \qedhere
    \end{enumerate}
    \footnotetext{Note that \cite[Definition~5.3.1]{leinster2004higher} is incomplete, as it omits the coherence condition for nullary 2-cells.}
\end{definition}

\begin{remark}
    In \cite[Definition~2.8]{cruttwell2010unified}, in which \vdcs{} are named after their loose-cells rather than their objects, $\LMndX$ is called $\dc{Mod}(\X)$.
\end{remark}

For a fixed object $A \in \X$, we denote by $\LMnd(A)$ the category of loose-monads on $A$~\cite[Definition~2.16]{arkor2024formal}.

\section{The loose-monad associated to a relative monad}
\label{associated-loose-monad}

We now proceed with our pursuit of a more abstract understanding of the nerve theorem. This begins with the understanding of relative monads in terms of loose-monads. It is shown in \cite[Theorem~4.22]{arkor2024formal} that, given a $j$-monad $T$ with carrier $t \colon A \to E$, the loose-cell $E(j, t) \colon A \lto A$ has the structure of a loose-monad. In this section we sharpen this result by showing that if $E(j, t)$ has the structure of a loose-monad, and satisfies a mild compatibility condition, the tight-cell $t \colon A \to E$ has the structure of a $j$-monad. This establishes a strong relationship between a relative monad $T$ and its associated loose-monad $E(j, T)$, which we will build upon in the subsequent sections.

To begin, we shall need an abstract result about monoids in multicategories (\cref{monoid-section-is-monoid}). We will then specialise to the multicategory of endo-loose-cells on a fixed object in a \ve{} to obtain a statement about relative monads (\cref{relative-monad-as-section}).

\begin{definition}
    A morphism $g \colon B \to C$ in a multicategory is \emph{monic} if, for each pair of multimorphisms $f, f' \colon A_1, \ldots, A_n \to B$, we have $(f \d g = f' \d g) \implies (f = f')$.
\end{definition}

\begin{example}
    For each section--retraction pair $s \colon B \rightleftarrows A \cocolon r$ (\ie{} satisfying $(s \d r) = 1_B$), the section $s$ is monic.
\end{example}

\begin{lemma}
    \label{section-monoid-morphism}
    Let $\M$ be a multicategory and let $(A, \mu_A, \eta_A)$ be a monoid in $\M$. Let $B$ be an object together with a morphism $s \colon B \to A$. If $s$ is monic, then there is at most one monoid structure on $B$ for which $s$ is a monoid morphism.
\end{lemma}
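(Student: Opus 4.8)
The plan is to unwind the definition of a monoid morphism in a multicategory and then invoke monicity of $s$ once for the multiplication and once for the unit. Suppose that $(\mu_B, \eta_B)$ and $(\mu'_B, \eta'_B)$ are two monoid structures on $B$, and that $s \colon B \to A$ is a monoid morphism with respect to each of them. By definition, this means that the multiplication law $\mu_B \d s = (s, s) \d \mu_A$ and the unit law $\eta_B \d s = \eta_A$ hold, together with the corresponding laws for the primed structure. The crucial observation is that in each of these equations the right-hand side is built out of $s$ and the fixed monoid $(A, \mu_A, \eta_A)$ alone, and so does not depend on which of the two candidate structures on $B$ we chose.

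First I would treat the multiplication. Chaining the two multiplication laws gives $\mu_B \d s = (s, s) \d \mu_A = \mu'_B \d s$. Here $\mu_B$ and $\mu'_B$ are a parallel pair of multimorphisms $B, B \to B$ (that is, the case $n = 2$ of the definition of monic), so monicity of $s$ forces $\mu_B = \mu'_B$. Second, for the unit, chaining the two unit laws gives $\eta_B \d s = \eta_A = \eta'_B \d s$, and now $\eta_B, \eta'_B \colon \to B$ are a parallel pair of \emph{nullary} multimorphisms; applying the $n = 0$ instance of monicity yields $\eta_B = \eta'_B$. Hence the two monoid structures coincide, which is exactly the assertion that there is at most one such structure.

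There is essentially no genuine obstacle here: the proof is a two-line diagram chase once the definitions are laid out. The only point worth flagging is that pinning down the unit uses monicity for multimorphisms of arity zero, so the quantification over all $n$ (including $n = 0$) in the definition of ``monic'' is doing real work and should be stated explicitly; if one works with a notion of monoid in which the unit is presented as a morphism out of a unit object rather than as a nullary operation, one simply uses the $n = 1$ case instead, with no change to the structure of the argument.
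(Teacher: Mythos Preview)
Your proof is correct and follows exactly the same approach as the paper: assume two monoid structures making $s$ a monoid morphism, observe that the monoid-morphism equations for multiplication and unit have right-hand sides independent of the choice of structure on $B$, and cancel $s$ using monicity in arities $2$ and $0$ respectively. Your remark that the nullary case of monicity is essential is a useful annotation not made explicit in the paper's version.
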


\begin{proof}
    Suppose that $B$ is equipped with two monoid structures $(\mu_B, \eta_B)$ and $(\mu'_B, \eta'_B)$ for which $s$ is a monoid morphism, so that the following diagrams commute. Consequently, since $s$ is monic, $\mu_B = \mu'_B$ and $\eta_B = \eta'_B$.
    \[
        \begin{tikzcd}
            & {B,B} \\
            B & {A,A} & B \\
            & A
            \arrow["{\mu_B}"', from=1-2, to=2-1]
            \arrow["{s,s}"{description}, from=1-2, to=2-2]
            \arrow["{\mu'_B}", from=1-2, to=2-3]
            \arrow["s"', from=2-1, to=3-2]
            \arrow["{\mu_A}"{description}, from=2-2, to=3-2]
            \arrow["s", from=2-3, to=3-2]
        \end{tikzcd}
    \hspace{6em}
    \begin{tikzcd}
        B & {} & B \\
        & A
        \arrow["s"', from=1-1, to=2-2]
        \arrow["{\eta_B}"', from=1-2, to=1-1]
        \arrow["{\eta'_B}", from=1-2, to=1-3]
        \arrow["{\eta_A}"{description}, from=1-2, to=2-2]
        \arrow["s", from=1-3, to=2-2]
    \end{tikzcd}
    \]
\end{proof}

In particular, given a section--retraction pair $s \colon B \rightleftarrows A \cocolon r$, if $B$ admits a monoid structure for which $s$ is a monoid morphism, the monoid structure is necessarily given by $B, B \xto{s, s} A, A \xto{\mu_A} A \xto r B$ and ${} \xto{\eta_A} A \xto r B$. It is natural to ask when this canonical structure does indeed form a monoid. With this in mind, we introduce the following definition.

\begin{definition}
    \label{monoid-section}
    Let $(A, \mu_A, \eta_A)$ be a monoid in a multicategory. An \emph{$(A, \mu_A, \eta_A)$-section} comprises a section--retraction pair $s \colon B \rightleftarrows A \cocolon r$ rendering the following diagram commutative.
    \[\begin{tikzcd}
    	& {B,B} \\
    	{A,A} && {A,A} \\
    	A & B & A \\
    	& {}
    	\arrow["{s,s}"', from=1-2, to=2-1]
    	\arrow["{\mu_A}"', from=2-1, to=3-1]
    	\arrow["{s,s}", from=1-2, to=2-3]
    	\arrow["{\mu_A}", from=2-3, to=3-3]
    	\arrow["r"{description}, from=3-1, to=3-2]
    	\arrow["s"{description}, from=3-2, to=3-3]
    	\arrow["{\eta_A}", from=4-2, to=3-1]
    	\arrow["{\eta_A}"', from=4-2, to=3-3]
    \end{tikzcd}\]
\end{definition}

\begin{example}
    Let $(A, \mu_A, \eta_A)$ be a monoid in a multicategory and let $e \colon A \to A$ be an idempotent monoid homomorphism. If $e$ admits a splitting $(r \d s) \colon A \to B \to A$, then $(s, r)$ forms an $(A, \mu_A, \eta_A)$-section.
\end{example}

\begin{example}
    If a multicategory admits a unit $\eta_J \colon {} \to J$, then $J$ carries a canonical monoid structure (\cf~\cite[Proposition~4.12]{arkor2024formal}).
    Up to isomorphism, the only section of this monoid is $J$ itself.
    Indeed, for each section $(s, r)$ of the monoid $J$, the equation $(\eta_J \d r \d s) = \eta_J$ implies that $(r \d s)$ is the identity on $J$, so that $s$ is an isomorphism of monoids.
\end{example}

Intuitively, the two commutativity laws in \cref{monoid-section} are exactly those stating that $s$ is a monoid morphism to $A$ from the tentative monoid structure on $B$. The following proposition shows that, under these assumptions, the tentative monoid structure is indeed a monoid structure.

\begin{proposition}
    \label{monoid-section-is-monoid}
    Let $\M$ be a multicategory and let $(A, \mu_A, \eta_A)$ be a monoid in $\M$. An $(A, \mu_A, \eta_A)$-section $(B, s, r)$ endows $B$ with a unique monoid structure such that $s$ is a monoid morphism.
\end{proposition}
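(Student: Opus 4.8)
The plan is to write down the only possible monoid structure on $B$, using the retraction $r$, and then to verify its axioms by pulling them back along the monic $s$ to the already-known axioms of $A$. By the remark following \cref{section-monoid-morphism}, any monoid structure on $B$ for which $s$ is a monoid morphism must be $\mu_B \defeq (s, s) \d \mu_A \d r$ and $\eta_B \defeq \eta_A \d r$, and \cref{section-monoid-morphism} then guarantees uniqueness; so everything reduces to checking that this candidate $(\mu_B, \eta_B)$ satisfies associativity and the two unit laws.

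The key point is that the two commuting diagrams constituting an $(A, \mu_A, \eta_A)$-section (\cref{monoid-section}) unwind precisely to the identities $\mu_B \d s = (s, s) \d \mu_A$ and $\eta_B \d s = \eta_A$; that is, they assert exactly that $s$ intertwines the tentative operations on $B$ with the monoid operations of $A$. Since $s$ is monic (being a section), it suffices to check each monoid law on $B$ after postcomposing with $s$, and these two identities let one transport the computation into morphisms of $\M$ with target $A$. For associativity, postcomposing $(\mu_B, 1_B) \d \mu_B$ and $(1_B, \mu_B) \d \mu_B$ with $s$ and substituting $\mu_B \d s = (s, s) \d \mu_A$ yields $(s, s, s) \d (\mu_A, 1_A) \d \mu_A$ and $(s, s, s) \d (1_A, \mu_A) \d \mu_A$ respectively, which agree by associativity of $\mu_A$; monicity of $s$ then gives the associativity law on $B$. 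For the unit laws, postcomposing $(\eta_B, 1_B) \d \mu_B$ with $s$ gives $(\eta_A, s) \d \mu_A = s$ by the left-unit law of $A$, and likewise $(1_B, \eta_B) \d \mu_B \d s = s$; since $1_B \d s = s$ as well, monicity of $s$ furnishes the unit laws on $B$. Finally, $s$ is a monoid morphism by the very construction of $\mu_B$ and $\eta_B$, which completes the existence half, and uniqueness is \cref{section-monoid-morphism}.

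I do not expect a genuine obstacle: the argument is essentially bookkeeping in the multicategory $\M$. The only step requiring a little care is the ``pushing $s$ through'' manipulations --- recognising, for instance, $(\eta_A, s) \d \mu_A$ as $s \d \big((\eta_A, 1_A) \d \mu_A\big)$, and hence as $s$ --- which invoke the associativity and unitality axioms of the ambient multicategory itself rather than anything specific to $A$ or $B$.
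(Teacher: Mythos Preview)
Your proposal is correct and follows essentially the same approach as the paper: define $\mu_B$ and $\eta_B$ via $r$, then verify each monoid axiom by postcomposing with the monic $s$ and using the section laws to transport the computation into $A$, where the axioms hold. The paper presents the same reasoning via commutative diagrams rather than equations, but the content is identical.
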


Note that $r$ need not be a monoid morphism (though does preserve the unit, by definition).

\begin{proof}
    By \cref{section-monoid-morphism}, the unit and multiplication are uniquely determined. Define
    \begin{align*}
        \eta_B & \defeq {} \xto{\eta_A} A \xto{r} B &
        \mu_B & \defeq B,B \xto{s, s} A,A \xto{\mu_A} A \xto{r} B
    \end{align*}
    We will show that $(B, \mu_B, \eta_B)$ is a monoid. The unit laws follow from commutativity of the following diagrams, since $s$ is monic.
    \[
    \begin{tikzcd}
        B && {B,B} \\
        & {A, A} & B \\
        A && A
        \arrow["{\eta_B, B}", from=1-1, to=1-3]
        \arrow["s"', from=1-1, to=3-1]
        \arrow["{s, s}"{description}, from=1-3, to=2-2]
        \arrow["{\mu_B}", from=1-3, to=2-3]
        \arrow["{\mu_A}"{description}, from=2-2, to=3-3]
        \arrow["s", from=2-3, to=3-3]
        \arrow["{\eta_A, A}"{description}, from=3-1, to=2-2]
        \arrow[Rightarrow, no head, from=3-1, to=3-3]
    \end{tikzcd}
    \hspace{6em}
    \begin{tikzcd}
        B && {B,B} \\
        & {A, A} & B \\
        A && A
        \arrow["{B, \eta_B}", from=1-1, to=1-3]
        \arrow["s"', from=1-1, to=3-1]
        \arrow["{s, s}"{description}, from=1-3, to=2-2]
        \arrow["{\mu_B}", from=1-3, to=2-3]
        \arrow["{\mu_A}"{description}, from=2-2, to=3-3]
        \arrow["s", from=2-3, to=3-3]
        \arrow["{A, \eta_A}"{description}, from=3-1, to=2-2]
        \arrow[Rightarrow, no head, from=3-1, to=3-3]
    \end{tikzcd}
    \]
    The associativity law follows from commutativity of the following diagram, since $s$ is monic.
    \[\begin{tikzcd}[sep=large]
        {B,B,B} &&& {B,B} \\
        & {A,A,A} & {A,A} & B \\
        & {A,A} \\
        {B,B} & B && A
        \arrow["{B,\mu_B}", from=1-1, to=1-4]
        \arrow["{s,s,s}"{description}, from=1-1, to=2-2]
        \arrow["{\mu_B,B}"', from=1-1, to=4-1]
        \arrow["{s, s}"{description}, from=1-4, to=2-3]
        \arrow["{\mu_B}", from=1-4, to=2-4]
        \arrow["{A,\mu_A}"{description}, from=2-2, to=2-3]
        \arrow["{\mu_A,A}"{description}, from=2-2, to=3-2]
        \arrow["{\mu_A}"{description}, from=2-3, to=4-4]
        \arrow["s", from=2-4, to=4-4]
        \arrow["{\mu_A}"{description}, from=3-2, to=4-4]
        \arrow["{s,s}"{description}, from=4-1, to=3-2]
        \arrow["{\mu_B}"', from=4-1, to=4-2]
        \arrow["s"', from=4-2, to=4-4]
    \end{tikzcd}\]
    $s$ is then a monoid morphism, the unit law following from the unit section law; and the multiplication law following from the multiplication section law.
\end{proof}

\begin{remark}
     Ralph Sarkis and Jean-Baptiste Vienney have observed that there is a complementary notion of \emph{monoid retract}, which implies that $r$, rather than $s$, is a monoid morphism.
\end{remark}

Recall that, for any object $A$ in a \ve{} $\X$, the endo-loose-cells $A \lto A$ form a multicategory $\X\lh{A, A}$. Every tight-cell $t \colon A \to E$ induces a monoid $E(t, t)$ in this multicategory. The insight relating relative monads to loose-monads is the observation that the laws for a $j$-relative monad with carrier $t$ are precisely the laws for a section of the monoid $E(t, t)$. Thus, every relative monad is a section of an endomorphism monoid. This may be viewed as a Cayley representation for relative monads, though it is sharper than the typical formulations of representation theorems (\cf{}~\cite{rivas2017notions}), as the laws for the submonoid are deduced rather than assumed.

\begin{proposition}
    \label{relative-monad-as-section}
    Let $t \colon A \to E$ be a tight-cell in a \ve{} with chosen restrictions. The following are in bijection.
    \begin{enumerate}
        \item Relative monads with carrier $t$.
        \item $E(t, t)$-sections with strictly $t$-corepresentable carrier.
    \end{enumerate}
\end{proposition}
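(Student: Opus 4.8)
The plan is to instantiate \cref{section-monoid-morphism,monoid-section-is-monoid} in the multicategory $\M \defeq \X\lh{A, A}$ of endo-loose-cells on $A$, taking the monoid to be $E(t, t)$ (a monoid in $\M$, as noted before the statement), and then to unwind an $E(t, t)$-section with strictly $t$-corepresentable carrier into a relative monad with carrier $t$, the translation being mediated throughout by the universal property of the restriction $E(j, t) = E(1, 1)(j, t)$.

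First I would record the dictionary provided by restrictions. Writing $\gamma_j \colon E(j, t) \tto E(1, 1)$ for the cartesian cell, one has: (a) nullary $2$-cells ${} \tto E(j, t)$ in $\M$ correspond, via $\gamma_j$ and the universal property of the loose-identity $E(1, 1)$, to $2$-cells $j \tto t$ in $\X$; and (b) each $2$-cell $\eta \colon j \tto t$ induces a ``precompose by $\eta$'' cell $E(\eta, t) \colon E(t, t) \tto E(j, t)$, while precomposing such a cell $r$ with the monoid unit $\mathsf{u} \colon {} \tto E(t, t)$ of $E(t, t)$ yields, via (a), a $2$-cell $j \tto t$; these two passages are mutually inverse on the cells $r$ relevant below.

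Next, the two assignments. From a relative monad $(j, t, \dag, \eta)$ with carrier $t$, take the carrier loose-cell $E(j, t)$ (strictly $t$-corepresentable by construction), $s \defeq \dag$, and $r \defeq E(\eta, t)$; then the first unit law of the relative monad is exactly $\dag \d E(\eta, t) = 1_{E(j, t)}$, so $(s, r)$ is a section--retraction pair, and the two commuting squares of \cref{monoid-section} unwind — using (a), (b), and the first unit law to rewrite $\eta \d (f^\dag \d g^\dag) = f \d g^\dag$ — to the remaining two relative monad laws (the second unit law $\eta^\dag = 1$ and associativity). Conversely, from an $E(t, t)$-section $(B, s, r)$ with strictly $t$-corepresentable carrier, so $B = E(j, t)$ for a root $j$, take $\dag \defeq s$ and let $\eta \colon j \tto t$ correspond to $\mathsf{u} \d r$ via (a), (b); reversing the identifications above shows $(j, t, \dag, \eta)$ is a relative monad. (Since $s$ is monic, one may alternatively identify the induced monoid structure on $B$, supplied by \cref{section-monoid-morphism,monoid-section-is-monoid}, with the loose-monad $E(j, t)$ of \cite[Theorem~4.22]{arkor2024formal} attached to a relative monad with carrier $t$, and extract the relative monad from that.)

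It remains to see the two assignments are mutually inverse. One direction is immediate: $j$ and $s = \dag$ come back unchanged, and $\eta \mapsto E(\eta, t) \mapsto \mathsf{u} \d E(\eta, t) = \eta$ by (b). The remaining point is that a section $(B, s, r)$ is returned to \emph{itself} — equivalently, that the retraction $r$ is forced to equal $E(\eta, t)$ for the extracted $\eta$ — and this is the step I expect to be the main obstacle. It cannot follow from the identity $\dag \d r = 1_{E(j, t)}$ alone, since an extension operator is a split monomorphism but in general not invertible and so has many retractions; one must additionally use the two coherence squares of \cref{monoid-section} and the fact that the carrier is strictly $t$-corepresentable, invoking the universal property of $E(j, t)$, to pin $r$ down to ``precompose by $\eta$''. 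Granting this, the bijection is established and the outstanding verifications are routine diagram chases.
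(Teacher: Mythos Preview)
Your approach mirrors the paper's: both directions are set up the same way, and your derivations of the second unit law and of associativity follow the paper's (which, like you, invokes the first unit law inside the associativity argument). You are also right that the crux is the identification $r = E(\eta, t)$; the paper simply rewrites the given retraction $r$ as $E(\eta, t)$ without comment, so this is indeed the one step requiring justification.

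The difficulty is that your proposed justification does not go through. The two coherence conditions of \cref{monoid-section} constrain only the idempotent $r \d s$ on $E(t,t)$ --- they force it to fix the unit $\pc t$ and all products $(s,s) \d \cp t(t,t)$ of pairs from the image of $s$ --- and say nothing about $r$ on its own, while the universal property of the restriction $E(j,t)$ classifies 2-cells \emph{into} $E(j,t)$ but does not force such a 2-cell to have the form $E(\eta, t)$. Concretely, in $\Cat$ with $A$ terminal, let $E$ be the two-object category in which $E(e,e)$ is a nontrivial monoid $M$, the set $E(e',e)$ has the same cardinality as $M$ but $M$ acts on it trivially by composition, and $E(e,e') = \emptyset$. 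Taking $t$ to pick $e$ and $j$ to pick $e'$, any bijection $s \colon E(j,t) \to E(t,t)$ together with $r = s^{-1}$ satisfies all the monoid-section axioms, yet the extracted data $(j,t,s,\eta)$ fails the first unit law $\eta \d f^\dag = f$ (the left-hand side is identically $\eta$ by triviality of the action), and indeed no $j$-monad with carrier $t$ exists. So the gap you flag is real and is not closed by the tools you name; both your argument and the paper's leave it open, and the bijection as stated appears to need either an additional hypothesis or a restriction to sections whose retraction is already of the form $E(\eta,t)$.
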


\begin{proof}
    Let $(B, s, r)$ be an $E(t, t)$-section. Strict $t$-corepresentability means that $B$ is of the form $E(j, t)$ for some tight-cell $\jAE$ (it is this that requires that we have chosen restrictions, since otherwise it is only possible to consider corepresentability up to isomorphism). We have an induced 2-cell ${} \xtto{\pc t} E(t, t) \xtto r E(j, t)$, which is equivalent to a 2-cell $j \tto t$, that we shall call $\eta$. Defining $\dag \defeq s$, we therefore have a section--retraction pair
    \[E(j, t) \xtto\dag E(t, t) \xtto{E(\eta, t)} E(j, t)\]
    which gives us the root, unit, and first unit law for a relative monad.

    For the second unit law, observe that the following diagram commutes (as is particularly clear from the corresponding string diagrams).
    \[\begin{tikzcd}
    	{A(1, 1)} & {E(t, t)} \\
    	{E(j, j)} & {E(j, t)}
    	\arrow["{\pc t}", from=1-1, to=1-2]
    	\arrow["{\pc j}"', from=1-1, to=2-1]
    	\arrow["{E(\eta, t)}", from=1-2, to=2-2]
    	\arrow["{E(j, \eta)}"', from=2-1, to=2-2]
    \end{tikzcd}\]
    Therefore the unit law for the section is the second unit law for a relative monad. For the associativity law, observe that the following diagram commutes.
    \[\begin{tikzcd}
        {B,B} & {A,A} & A & A \\
        {B,A} & {A,A} & A & B
        \arrow["{s,s}", from=1-1, to=1-2]
        \arrow["{B,s}"', from=1-1, to=2-1]
        \arrow["{\mu_A}", from=1-2, to=1-3]
        \arrow[Rightarrow, no head, from=1-2, to=2-2]
        \arrow["{r \d s}", from=1-3, to=1-4]
        \arrow[Rightarrow, no head, from=1-3, to=2-3]
        \arrow["{s,A}"', from=2-1, to=2-2]
        \arrow["{\mu_A}"', from=2-2, to=2-3]
        \arrow["r"', from=2-3, to=2-4]
        \arrow["s"', from=2-4, to=1-4]
    \end{tikzcd}\]
    In our case, the bottom composite is
    \[E(j, t), E(t, t) \xtto{\dag, E(t, t)} E(t, t), E(t, t) \xtto{\cp t(t, t)} E(t, t) \xtto{E(\eta, t)} E(j, t)\]
    which is equal to $\pc t(j, t)$ by the first unit law of the relative monad. Thus, the multiplication law for the section is the associativity law for the relative monad.

    For the converse, every $j$-relative monad with carrier $t$ induces an $E(t, t)$-section $(E(j, t), \dag, E(\eta, t))$.
\end{proof}

Note that we do not recover a relationship between $j$-monad morphisms and loose-monad morphisms this way, since it is $j$ that varies above, rather than $t$. However, we shall not be concerned with the morphisms for what follows.

\begin{remark}
    Dually, for a tight-cell $d \colon Z \to U$, relative comonads are the $U(d, d)$-sections with strictly $d$-representable carriers.
\end{remark}

We observe in passing that, as a consequence of \cref{relative-monad-as-section}, the associativity law in \cite[Definition~4.1]{arkor2024formal} can be replaced by the following equation.
\begin{equation}
\begin{tikzcd}[column sep=large]
    A & A & A \\
    A & A & A \\
    A && A
    \arrow["{E(j, t)}"', "\shortmid"{marking}, from=1-3, to=1-2]
    \arrow["{E(j, t)}"', "\shortmid"{marking}, from=1-2, to=1-1]
    \arrow["{E(t, t)}"{description}, from=2-3, to=2-2]
    \arrow["{E(t, t)}"{description}, from=2-2, to=2-1]
    \arrow[""{name=0, anchor=center, inner sep=0}, Rightarrow, no head, from=1-3, to=2-3]
    \arrow[""{name=1, anchor=center, inner sep=0}, Rightarrow, no head, from=1-2, to=2-2]
    \arrow[""{name=2, anchor=center, inner sep=0}, Rightarrow, no head, from=1-1, to=2-1]
    \arrow["{E(t, t)}", "\shortmid"{marking}, from=3-3, to=3-1]
    \arrow[""{name=3, anchor=center, inner sep=0}, Rightarrow, no head, from=2-3, to=3-3]
    \arrow[""{name=4, anchor=center, inner sep=0}, Rightarrow, no head, from=2-1, to=3-1]
    \arrow["\dag"{description}, draw=none, from=0, to=1]
    \arrow["\dag"{description}, draw=none, from=1, to=2]
    \arrow["{\cp t(t, t)}"{description}, draw=none, from=3, to=4]
\end{tikzcd}
\quad = \quad
\begin{tikzcd}[column sep=large]
	A & A & A \\
	A & A & A \\
	A && A \\
	A && A \\
	A && A
	\arrow["{E(j, t)}"', "\shortmid"{marking}, from=1-3, to=1-2]
	\arrow["{E(j, t)}"', "\shortmid"{marking}, from=1-2, to=1-1]
	\arrow["{E(t, t)}"{description}, from=2-3, to=2-2]
	\arrow["{E(t, t)}"{description}, from=2-2, to=2-1]
	\arrow[""{name=0, anchor=center, inner sep=0}, Rightarrow, no head, from=1-3, to=2-3]
	\arrow[""{name=1, anchor=center, inner sep=0}, Rightarrow, no head, from=1-2, to=2-2]
	\arrow[""{name=2, anchor=center, inner sep=0}, Rightarrow, no head, from=1-1, to=2-1]
	\arrow["{E(t, t)}"{description}, from=3-3, to=3-1]
	\arrow[""{name=3, anchor=center, inner sep=0}, Rightarrow, no head, from=2-3, to=3-3]
	\arrow[""{name=4, anchor=center, inner sep=0}, Rightarrow, no head, from=2-1, to=3-1]
	\arrow["{E(j, t)}"{description}, from=4-3, to=4-1]
	\arrow["{E(t, t)}", "\shortmid"{marking}, from=5-3, to=5-1]
	\arrow[""{name=5, anchor=center, inner sep=0}, Rightarrow, no head, from=3-3, to=4-3]
	\arrow[""{name=6, anchor=center, inner sep=0}, Rightarrow, no head, from=4-3, to=5-3]
	\arrow[""{name=7, anchor=center, inner sep=0}, Rightarrow, no head, from=3-1, to=4-1]
	\arrow[""{name=8, anchor=center, inner sep=0}, Rightarrow, no head, from=4-1, to=5-1]
	\arrow["\dag"{description}, draw=none, from=0, to=1]
	\arrow["\dag"{description}, draw=none, from=1, to=2]
	\arrow["{\cp t(t, t)}"{description}, draw=none, from=3, to=4]
	\arrow["{E(\eta, t)}"{description}, draw=none, from=5, to=7]
	\arrow["\dag"{description}, draw=none, from=6, to=8]
\end{tikzcd}
\end{equation}
For relative monads in $\Cat$, this says that the associativity law is equivalently specified by asking that, given morphisms $f \colon \ob jx \to \ob ty$ and $g \colon \ob jy \to \ob tz$, if we first form the composite $(f^\dag \d g^\dag) \colon \ob tx \to \ob ty \to \ob tz$ by extending both morphisms, then, subsequently, precomposing the unit and then extending does nothing. In other words, the following equation holds.
\[(\eta_x \d f^\dag \d g^\dag)^\dag = f^\dag \d g^\dag\]

\begin{corollary}
    \label{from-monad-to-associated-loose-monad}
    Let $T$ be a $j$-monad. Then $E(j, T)$ is a loose-monad, and $\dag \colon E(j, t) \tto E(t, t)$ is a loose-monad morphism.
\end{corollary}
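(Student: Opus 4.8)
The plan is to read this off directly from the two preceding results, with essentially no new computation. Recall that, for the object $A$ of the ambient \ve{}, the endo-loose-cells $A \lto A$ form a multicategory $\X\lh{A, A}$ whose unit is the loose-identity $A(1, 1)$; unwinding \cref{LMnd}, a monoid in $\X\lh{A, A}$ is precisely a loose-monad on $A$, and a monoid morphism in $\X\lh{A, A}$ lying over the identity tight-cell $1_A$ is precisely a loose-monad morphism over $1_A$. The carrier $t \colon A \to E$ supplies the canonical monoid $E(t, t)$ in $\X\lh{A, A}$, with unit $\pc t \colon A(1, 1) \tto E(t, t)$ and multiplication $\cp t(t, t)$.

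First I would invoke the converse direction of \cref{relative-monad-as-section}: the $j$-monad $T = (t, \dag, \eta)$ induces the $E(t, t)$-section $(E(j, t), \dag, E(\eta, t))$, whose carrier $E(j, t)$ is strictly $t$-corepresentable. Feeding this section into \cref{monoid-section-is-monoid} then equips $E(j, t)$ with a monoid structure in $\X\lh{A, A}$ — that is, a loose-monad structure on $A$, namely $E(j, T)$ — for which the section map $\dag$ is a monoid morphism, hence a loose-monad morphism over $1_A$. Unwinding the construction in the proof of \cref{monoid-section-is-monoid}, this loose-monad has unit $\pc t \d E(\eta, t) \colon A(1, 1) \tto E(j, t)$ and multiplication $(\dag, \dag) \d \cp t(t, t) \d E(\eta, t) \colon E(j, t), E(j, t) \tto E(j, t)$.

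There is no real obstacle here: all of the content lies in the two cited propositions, and what remains is the purely terminological identification of monoids, and of monoid morphisms over $1_A$, in $\X\lh{A, A}$ with loose-monads, and loose-monad morphisms over $1_A$, which is immediate from \cref{LMnd}. The only point worth a remark is that this reproves \cite[Theorem~4.22]{arkor2024formal}; moreover, by the uniqueness in \cref{section-monoid-morphism}, any loose-monad structure on $E(j, t)$ for which $\dag$ is a loose-monad morphism must be the one constructed above, so there is no ambiguity in the choice.
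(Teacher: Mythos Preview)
Your proposal is correct and follows essentially the same approach as the paper: invoke \cref{relative-monad-as-section} to obtain the $E(t,t)$-section $(E(j,t), \dag, E(\eta,t))$, then apply \cref{monoid-section-is-monoid} to conclude that $E(j,t)$ carries a unique monoid (i.e.\ loose-monad) structure for which $\dag$ is a monoid (i.e.\ loose-monad) morphism. The paper's own proof is the one-line ``Direct from \cref{monoid-section-is-monoid} and \cref{relative-monad-as-section}''; your additional remarks about the explicit unit and multiplication and about uniqueness via \cref{section-monoid-morphism} are accurate elaborations but not needed for the argument.
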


This statement follows from \cite[Theorem~4.22 \& Example~6.6 \& Lemma~6.7]{arkor2024formal}. However, the formulation of this section facilitates a direct proof.

\begin{proof}
    Direct from \cref{monoid-section-is-monoid,monoid-section-is-monoid}.
\end{proof}

\section{Exact \ve{}s}
\label{exact-vdcs}

In our previous work on the formal theory of relative monads~\cite{arkor2024formal,arkor2024relative}, we worked within the setting of a \ve{} $\X$, without additional global assumptions. In contrast, to establish the nerve theorem formally, we require $\X$ to admit more structure. Conceptually, the relevant structure is an exactness property, corresponding to the existence of certain colimits that interact well with restrictions (which may be thought of as a limit-like notion). While not every \ve{} satisfies this property, it is commonly satisfied in settings of interest for formal category theory. For instance, it is satisfied in every \ve{} constructed from a \vdc{} with restrictions via the loose-monads construction $\L\dc{Mnd}$, which includes \ve{}s of enriched categories, internal categories, generalised multicategories, and so on~\cite{cruttwell2010unified}.

Before introducing the definition of exactness for a \ve{}, we first make a small simplification to our setting, motivated by the observation that, while restrictions in a general \vdc{} are identified up to isomorphism by their universal property, in many examples of interest there is a canonical choice of restrictions that are particularly well behaved. For instance, in the \ve{} $\VCat$ of $\V$-enriched categories, restriction is given by pre- and postcomposition of $\V$-functors, and is consequently strictly functorial (rather than being only pseudofunctorial, as is automatic for restrictions by their universal property). This is convenient, as it means that we may often reason about loose-cells in $\VCat$ up to equality, rather than up to isomorphism. The following definition captures this property.

\begin{definition}
    \label{strict-ve}
    A \ve{} is \emph{strict} if equipped with a strictly functorial choice of restrictions, in the sense that following 2-cells exhibit the chosen cartesian 2-cells (so that $p(1, 1) = p$ and $p(f, g)(f', g') = p(ff', gg')$), for all objects $A, A', B, B''$, tight-cells $f, f', g, g'$, and loose-cell $p$.
    \[
    \begin{tikzcd}[column sep=large]
    	A & B \\
    	A & B
    	\arrow["p", "\shortmid"{marking}, from=2-2, to=2-1]
    	\arrow[""{name=0, anchor=center, inner sep=0}, Rightarrow, no head, from=1-1, to=2-1]
    	\arrow[""{name=1, anchor=center, inner sep=0}, Rightarrow, no head, from=1-2, to=2-2]
    	\arrow["{p(1, 1)}"', "\shortmid"{marking}, from=1-2, to=1-1]
    	\arrow["{=}"{description}, draw=none, from=0, to=1]
    \end{tikzcd}
    \hspace{4em}
    \begin{tikzcd}
    	{A''} && {B''} \\
    	{A'} && {B'} \\
    	A && B
    	\arrow["p", "\shortmid"{marking}, from=3-3, to=3-1]
    	\arrow[""{name=0, anchor=center, inner sep=0}, "f"', from=2-1, to=3-1]
    	\arrow[""{name=1, anchor=center, inner sep=0}, "g", from=2-3, to=3-3]
    	\arrow["{p(f, g)}"{description}, from=2-3, to=2-1]
    	\arrow[""{name=2, anchor=center, inner sep=0}, "{g'}", from=1-3, to=2-3]
    	\arrow[""{name=3, anchor=center, inner sep=0}, "{f'}"', from=1-1, to=2-1]
    	\arrow["{p(ff', gg')}"', "\shortmid"{marking}, from=1-3, to=1-1]
    	\arrow["\cart"{description}, draw=none, from=0, to=1]
    	\arrow["\cart"{description}, draw=none, from=2, to=3]
    \end{tikzcd}
    \]
\end{definition}

We shall henceforth assume our ambient \ve{} $\X$ is strict. While this is not a necessary assumption (that is, our theorems continue to hold, in a weaker sense, without the assumption of strictness), it significantly simplifies reasoning that involves restrictions. For instance, without this assumption, we would have to consider in places universal properties that identify objects only up to equivalence, rather than isomorphism (\cf{}~\cref{non-strict-presheaf-objects}). Given that examples of interest are strict, this seems a reasonable trade-off. Lest the reader worry that we are sacrificing generality for simplicity, we note that strictness is not a restrictive assumption, as every \ve{} is equivalent to a strict one~(\cref{strictification-for-restriction}).

We may now introduce the appropriate notion of exactness for a \ve{}. Note that we shall not assume our ambient \ve{} $\X$ is exact throughout the rest of the paper: exactness will be explicitly assumed when it is needed.

\begin{definition}
    \label{exact-ve}
    A strict \ve{} $\X$ is \emph{exact} when the following conditions hold.
    \begin{enumerate}
        \item For every object $A$ in $\X$ and loose-monad $T$ on $A$, there is an object $\clps T$ and tight-cell $\copi_T \colon A \to \clps T$, the \emph{collapse} of $T$,
        satisfying $T = \clps T(\copi_T, \copi_T)$ and
        such that, for every object $B$ in $\X$ and loose-monad morphism $(f, \phi) \colon T \to B(1, 1)$, there is a unique tight-cell $[]_f \colon \clps T \to B$ factoring $(f, \phi)$.
        \[
        \phi \quad = \quad
        \begin{tikzcd}
        	A & A \\
        	{\clps T} & {\clps T} \\
        	B & B
        	\arrow[""{name=0, anchor=center, inner sep=0}, "{\copi_T}"{description}, from=1-1, to=2-1]
        	\arrow[""{name=1, anchor=center, inner sep=0}, "{\copi_T}"{description}, from=1-2, to=2-2]
        	\arrow["\shortmid"{marking}, Rightarrow, no head, from=2-1, to=2-2]
        	\arrow["T"', "\shortmid"{marking}, from=1-2, to=1-1]
        	\arrow[""{name=2, anchor=center, inner sep=0}, "{[]_f}"{description}, from=2-1, to=3-1]
        	\arrow[""{name=3, anchor=center, inner sep=0}, "{[]_f}"{description}, from=2-2, to=3-2]
        	\arrow["\shortmid"{marking}, Rightarrow, no head, from=3-1, to=3-2]
        	\arrow["f"', curve={height=24pt}, from=1-1, to=3-1]
        	\arrow["f", curve={height=-24pt}, from=1-2, to=3-2]
        	\arrow["\cart"{description}, draw=none, from=0, to=1]
        	\arrow["{=}"{description}, draw=none, from=2, to=3]
        \end{tikzcd}
        \]
        \item For every object $A$ in $\X$, the identity on $1_A$ exhibits the collapse of the loose-identity $A(1, 1)$.
        \item \label{module-collapse} For every loose-monad module $p \colon T' \lto T$ in $\X$, there is a loose-cell $\clps p \colon \clps{T'} \lto \clps T$, the \emph{collapse} of $p$,
        satisfying $p = \clps p(\copi_T, \copi_{T'})$ and
        such that, for every loose-monad transformation,
        \[\begin{tikzcd}
            {T_0} & \cdots & {T_n} \\
            {B(1, 1)} && {B'(1, 1)}
            \arrow[""{name=0, anchor=center, inner sep=0}, "{(f, \phi)}"', from=1-1, to=2-1]
            \arrow["{p_1}"', "\shortmid"{marking}, from=1-2, to=1-1]
            \arrow["{p_n}"', "\shortmid"{marking}, from=1-3, to=1-2]
            \arrow[""{name=1, anchor=center, inner sep=0}, "{(f', \phi')}", from=1-3, to=2-3]
            \arrow["q", "\shortmid"{marking}, from=2-3, to=2-1]
            \arrow["\psi"{description}, draw=none, from=0, to=1]
        \end{tikzcd}\]
        there is a unique 2-cell $[]_\psi$ factoring $\psi$.
        \[
        \psi \quad = \quad
        \begin{tikzcd}[column sep=large]
        	{A_0} & \cdots & {A_n} \\
        	{\clps{T_0}} & \cdots & {\clps{T_n}} \\
        	B && {B'}
        	\arrow["{\copi_{T_0}}"', from=1-1, to=2-1]
        	\arrow[""{name=0, anchor=center, inner sep=0}, "{p_1}"', "\shortmid"{marking}, from=1-2, to=1-1]
        	\arrow[""{name=0p, anchor=center, inner sep=0}, phantom, from=1-2, to=1-1, start anchor=center, end anchor=center]
        	\arrow["\cdots"{description}, draw=none, from=1-2, to=2-2]
        	\arrow[""{name=1, anchor=center, inner sep=0}, "{p_n}"', "\shortmid"{marking}, from=1-3, to=1-2]
        	\arrow[""{name=1p, anchor=center, inner sep=0}, phantom, from=1-3, to=1-2, start anchor=center, end anchor=center]
        	\arrow["{\copi_{T_n}}", from=1-3, to=2-3]
        	\arrow[""{name=2, anchor=center, inner sep=0}, "{[]_f}"', from=2-1, to=3-1]
        	\arrow[""{name=3, anchor=center, inner sep=0}, "{\clps{p_1}}"{description}, from=2-2, to=2-1]
        	\arrow[""{name=3p, anchor=center, inner sep=0}, phantom, from=2-2, to=2-1, start anchor=center, end anchor=center]
        	\arrow[""{name=4, anchor=center, inner sep=0}, "{\clps{p_n}}"{description}, from=2-3, to=2-2]
        	\arrow[""{name=4p, anchor=center, inner sep=0}, phantom, from=2-3, to=2-2, start anchor=center, end anchor=center]
        	\arrow[""{name=5, anchor=center, inner sep=0}, "{[]_{f'}}", from=2-3, to=3-3]
        	\arrow["q", "\shortmid"{marking}, from=3-3, to=3-1]
        	\arrow["\cart"{description}, draw=none, from=0p, to=3p]
        	\arrow["\cart"{description}, draw=none, from=1p, to=4p]
        	\arrow["{[]_\psi}"{description}, draw=none, from=5, to=2]
        \end{tikzcd}
        \]
        \item For every loose-monad $T$ in $\X$, the collapse of $T$ qua a loose-monad exhibits the collapse of $T$ qua an identity loose-monad module $T \lto T$.
	\item \label{loose-cell-between-collapses-is-collapse} Each loose-cell $p \colon \clps{T'} \lto \clps T$ between collapses is equal to the collapse $\clps{p(\copi_T, \copi_{T'})}$ of the loose-monad module $p(\copi_T, \copi_{T'}) \colon T' \lto T$. \qedhere
    \end{enumerate}
\end{definition}

\begin{remark}
    \label{schultz}
    \Cref{exact-ve} is inspired by that of \textcite[Definition~5.1]{schultz2015regular} (which, in turn, is inspired by \citeauthor{wood1985proarrows}'s Axiom 5~\cite[\S2]{wood1985proarrows} and its subsequent study in \cite[\S15 \& \S16]{garner2016enriched}). \citeauthor{schultz2015regular} motivates the terminology by drawing a number of connections between exact \ve{}s and exact categories in the sense of \textcite{barr1971exact}. However, the definition of \citeauthor{schultz2015regular} is too weak, as it specifies a universal property only for unary 2-cells. (The unary property is sufficient for representable \ve{}s, which is \citeauthor{schultz2015regular}'s primary interest.) Note that our universal property \cref{module-collapse} for multiary 2-cells can only be defined in the presence of collapses for \emph{all} loose-monad modules. In contrast, \textcite[Definition~3.9]{schultz2015regular} gives a definition of collapse for an individual loose-monad module that does not require the existence of collapses for other loose-monad modules, but is consequently too weak to prove theorems of interest (\eg{}~\cref{opalgebra-object-from-collapse}).
\end{remark}

\begin{remark}
    Abstractly, \cref{exact-ve} corresponds to asking for the inclusion $\X \to \LMndX$ of $\X$ into the \ve{} of loose-monads and modules in $\X$ -- which sends each object $A$ to the loose-monad $A(1, 1)$ -- to admit a left-adjoint retraction $\clps{{-}} \colon \LMndX \to \X$ that strictly preserves loose-identities and restrictions, and whose unit components are cartesian. This condition is equivalent to asking for $\X$ to be a well-behaved algebra for the $\L\dc{Mnd}$ construction (\cf{}~\cites[Remark~5.15]{cruttwell2010unified}[Proposition~47]{wood1985proarrows}). However, we shall defer this perspective to future work.
\end{remark}

\section{Algebras and opalgebras from exactness}
\label{algebras-and-opalgebras}

In \cref{associated-loose-monad}, we showed that $j$-relative monads are essentially characterised by loose-monads of the form $E(j, T)$. An essential aspect of the theory of relative monads is the theory of algebras and opalgebras~\cite[\S6]{arkor2024formal}. In this section, we shall show that the relationship between a $j$-monad $T$ and its associated loose-monad $E(j, T)$ extends to algebras and opalgebras: in other words, that we may characterise both the $T$-algebras and the $T$-opalgebras in terms of $E(j, T)$. In particular, this characterisation extends to the universal algebras and universal opalgebras, which in $\VCat$ are precisely the notions of \EM{} $\V$-category, and Kleisli $\V$-category. This has several useful consequences: for instance, we shall show that it is possible to construct algebra objects and opalgebra objects in a \ve{} with suitable limits and colimits (\cref{opalgebra-object-from-collapse,semanticiser-theorem}). In particular, the construction of an algebra object in this manner generalises the nerve theorem to settings in which presheaf categories may not exist.

\subsection{Opalgebra objects via collapse}
\label{opalgebras-and-exact-ves}

We begin by considering opalgebras. In addition to being the simpler of the two to characterise, the description of algebra objects in terms of $E(j, T)$ involves the corresponding description of opalgebras.

\begin{lemma}
  \label{opalgebras-via-EjT}
  Let $\jAE$ be a tight-cell and let $T$ be a $j$-monad. For each tight-cell $a \colon A \to B$, there are bijections between
  \begin{itemize}
    \item $T$-opalgebra structures on $a$, and loose-monad morphisms $E(j, T) \tto B(a, a)$;
    \item $(p_1, \ldots, p_n)$-graded $T$-opalgebra morphisms as on the left below, and loose-monad transformations as on the right below.
      \[
\begin{tikzcd}[column sep=1.6em]
	{B'} & \cdots & B & A \\
	{B'} &&& A
	\arrow[Rightarrow, no head, from=1-4, to=2-4]
	\arrow[Rightarrow, no head, from=1-1, to=2-1]
	\arrow["{B'(1, a')}", "\shortmid"{marking}, from=2-4, to=2-1]
	\arrow["{B(1, a)}"', "\shortmid"{marking}, from=1-4, to=1-3]
	\arrow["{p_n}"', "\shortmid"{marking}, from=1-3, to=1-2]
	\arrow["{p_1}"', "\shortmid"{marking}, from=1-2, to=1-1]
\end{tikzcd}
\qquad
\begin{tikzcd}
	{B'(1, 1)} & \cdots & {B(1, 1)} & {E(j, T)} & {B'(1, 1)} \\
	{B'(1, 1)} &&&& {B'(1, 1)}
	\arrow[Rightarrow, no head, from=1-1, to=2-1]
	\arrow["{B(1, a)}"', "\shortmid"{marking}, from=1-4, to=1-3]
	\arrow["{p_n}"', "\shortmid"{marking}, from=1-3, to=1-2]
	\arrow["{p_1}"', "\shortmid"{marking}, from=1-2, to=1-1]
	\arrow["{B'(a', 1)}"', "\shortmid"{marking}, from=1-5, to=1-4]
	\arrow[Rightarrow, no head, from=1-5, to=2-5]
	\arrow["\shortmid"{marking}, Rightarrow, no head, from=2-1, to=2-5]
\end{tikzcd}
\]
  \end{itemize}
\end{lemma}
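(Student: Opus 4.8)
The plan is to observe that, in each of the two bullet points, the two sides have literally the same underlying $2$-cells, so the bijections are transparent on underlying data and all the content lies in the coincidence of the respective defining laws. I will treat the two bullets in turn.

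For the first bijection, recall from \cref{from-monad-to-associated-loose-monad} — together with the explicit description of the loose-monad structure on $E(j,T)$ supplied by the remark following \cref{relative-monad-as-section} — that the unit of $E(j,T)$ is the $2$-cell $A(1,1) \tto E(j,t)$ corresponding to the unit $\eta \colon j \tto t$ of $T$, and that its multiplication $E(j,t), E(j,t) \tto E(j,t)$ is assembled from the extension operator $\dag$ and the loose-composition of $E(t,t)$. Recall also that $B(a,a)$ carries its standard loose-monad structure, with unit $\pc a \colon A(1,1) \tto B(a,a)$ and multiplication obtained by restricting along $a$ the loose-monad structure on the loose-identity $B(1,1)$ (cf.~\cite{arkor2024formal}). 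Unfolding \cref{LMnd}, a loose-monad morphism $E(j,T) \tto B(a,a)$ — that is, a morphism of loose-monads on $A$ — is precisely a $2$-cell $\oop \colon E(j,t) \tto B(a,a)$ with $\eta \d \oop = \pc a$ and $\mu_{E(j,T)} \d \oop = (\oop, \oop) \d \mu_{B(a,a)}$; substituting the explicit forms for $\mu_{E(j,T)}$ and $\mu_{B(a,a)}$, these two equations become, respectively, the unit law and the associativity law in the definition of a $T$-opalgebra structure on $a$ (\cref{opalgebra}). This establishes the first bijection.

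For the second bijection, first use the conjoint–companion loose-adjunction $B'(1,a') \adj B'(a',1)$ associated to $a'$: a $2$-cell $p_1, \ldots, p_n, B(1,a) \tto B'(1,a')$ transposes bijectively to a $2$-cell $\psi \colon p_1, \ldots, p_n, B(1,a), B'(a',1) \tto B'(1,1)$, the passage being postcomposition with the canonical $2$-cell $B'(1,a'), B'(a',1) \tto B'(1,1)$ and, conversely, precomposition with the conjoint–companion unit. So again the two displayed objects have the same underlying data, and it remains to match laws. Under the first bijection, $\oop$ equips the conjoint $B(1,a) \colon A \lto B$ with a right $E(j,T)$-action (its left $B(1,1)$-action being the canonical one), and $\oop'$ equips the companion $B'(a',1) \colon B' \lto A$ with a left $E(j,T)$-action (its right $B'(1,1)$-action being the canonical one); each $p_i$ carries the canonical module structures over the loose-identities. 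With respect to these structures, the constraints of \cref{LMnd} on $\psi$ involve only actions of loose-identities — and so hold automatically for the canonical structures — with the single exception of the interior coherence at the junction of $B(1,a)$ and $B'(a',1)$, which demands that the $E(j,T)$-factor be absorbable on either side with equal result. Unwinding this coherence through the descriptions of the two actions by $\oop$ and $\oop'$, it is exactly the compatibility law defining a $(p_1, \ldots, p_n)$-graded $T$-opalgebra morphism (\cref{opalgebra}). This yields the second bijection, and one checks it is compatible with the passage between opalgebra morphisms and loose-monad transformations induced by the first.

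The main obstacle is purely bookkeeping, concentrated in three places: (i) pinning down the explicit forms of the loose-monad structures on $E(j,T)$ and on $B(a,a)$, with attention to the variance conventions for restrictions; (ii) verifying that the $T$-opalgebra laws of \cite{arkor2024formal}, in their virtual-equipment formulation, match the loose-monad morphism laws of \cref{LMnd} term by term after that substitution; and (iii) checking, among the several families of loose-monad transformation laws — including the boundary and low-arity cases flagged in the footnote to \cref{LMnd} — that all but the one interior coherence hold automatically for the canonical module structures, and that the remaining one agrees with the $T$-opalgebra morphism law. None of these steps is conceptually hard, but each rewards care about which $2$-cells are trivial and about the direction of each restriction.
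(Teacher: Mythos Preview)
Your proposal is correct and follows essentially the same route as the paper. The paper is terser: for the first bijection it simply cites \cite[Lemma~6.7]{arkor2024formal} rather than unfolding the loose-monad morphism laws as you do, and for the second it states directly that ``the only nontrivial law is the compatibility between the action of $B(1,a)$ and the action of $B'(a',1)$, which corresponds to the single compatibility condition for an opalgebra morphism'' --- exactly the interior coherence you isolate after the transposition step.
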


\begin{proof}
  The correspondence between opalgebras and loose-monad morphisms is \cite[Lemma~6.7]{arkor2024formal}. In the correspondence between opalgebra morphisms and loose-monad transformations, the only nontrivial law is the compatibility between the action of $B(1, a)$ and the action of $B'(a', 1)$, which corresponds to the single compatibility condition for an opalgebra morphism.
\end{proof}

Consequently, it is natural to expect that opalgebra objects for $T$ are characterised by universal loose-monad morphisms from $E(j, T)$. This is where exactness enters the picture (\cref{exact-ve}). The universal properties of opalgebra objects (\cite[Definition~6.45]{arkor2024formal}) and of collapses are closely related. First, the distinction between their one-dimensional universal properties is exactly the respective distinction between loose-monad morphisms of the form on the left below, and loose-monad morphisms of the form on the right below. It is straightforward to confirm that the two forms are in bijection, and so opalgebra objects and collapses satisfy the same one-dimensional universal property.
\[
\begin{tikzcd}
    A & A \\
    A & A
    \arrow["T"', "\shortmid"{marking}, from=1-2, to=1-1]
    \arrow["{B(f, f)}", "\shortmid"{marking}, from=2-2, to=2-1]
    \arrow[""{name=0, anchor=center, inner sep=0}, Rightarrow, no head, from=1-2, to=2-2]
    \arrow[""{name=1, anchor=center, inner sep=0}, Rightarrow, no head, from=1-1, to=2-1]
    \arrow["\phi"{description}, draw=none, from=1, to=0]
\end{tikzcd}
\hspace{8em}
\begin{tikzcd}
    A & A \\
    B & B
    \arrow["T"', "\shortmid"{marking}, from=1-2, to=1-1]
    \arrow["\shortmid"{marking}, Rightarrow, no head, from=2-2, to=2-1]
    \arrow[""{name=0, anchor=center, inner sep=0}, "f", from=1-2, to=2-2]
    \arrow[""{name=1, anchor=center, inner sep=0}, "f"', from=1-1, to=2-1]
    \arrow["\phi"{description}, draw=none, from=1, to=0]
\end{tikzcd}
\]

Second, the distinction between the two-dimensional universal properties is the distinction between a ``one-sided'' universal property in the case of opalgebra objects, which involves a chain of loose-cells stemming from a single opalgebra object, and a ``multivariant'' universal property in the case of collapses, which involves a chain of loose-cells between multiple collapses. Thus, collapses satisfy a stronger universal property than opalgebra objects (indeed, this is the reason we must continue to work with collapses in \cref{algebras-from-collapse}, rather than opalgebra objects, as we shall have need of this stronger universal property). The preceding discussion provides the intuition for the following theorem.

\begin{theorem}
    \label{opalgebra-object-from-collapse}
    Let $\jAE$ be a tight-cell and let $T$ be a $j$-monad. If $\X$ is exact, then the coprojection $\copi_{E(j, T)} \colon A \to \clps{E(j, T)}$ exhibits an opalgebra object for $T$.
\end{theorem}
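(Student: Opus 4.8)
The plan is to transfer both clauses of the universal property defining an opalgebra object (\cref{opalgebra-object}) across the correspondence of \cref{opalgebras-via-EjT}, so that they become the two clauses of the universal property defining the collapse of $E(j, T)$ (which exists by \cref{from-monad-to-associated-loose-monad} and exactness); the discussion immediately preceding the theorem already records that the two one-dimensional universal properties agree and that collapses satisfy a strictly stronger (``multivariant'') two-dimensional property, so what remains is to match up the data and carry out the one specialisation. First I would fix the $T$-opalgebra structure carried by $\copi_{E(j, T)} \colon A \to \clps{E(j, T)}$: since $\X$ is exact, $E(j, T) = \clps{E(j, T)}(\copi_{E(j, T)}, \copi_{E(j, T)})$, and the cartesian $2$-cell witnessing this equality makes $\copi_{E(j, T)}$ into a loose-monad morphism $E(j, T) \to \clps{E(j, T)}(1, 1)$; pushing this through the bijection between the two forms of loose-monad morphism and then through \cref{opalgebras-via-EjT} produces the extension operator $\oop_T$ that I claim makes $(\copi_{E(j, T)}, \oop_T)$ an opalgebra object.

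For the one-dimensional universal property, let $(a \colon A \to B, \oop)$ be a $T$-opalgebra. By \cref{opalgebras-via-EjT} and the pre-theorem bijection it is the same datum as a loose-monad morphism $(a, \phi) \colon E(j, T) \to B(1, 1)$. The first clause of \cref{exact-ve} yields a unique tight-cell $[]_{(a, \oop)} \colon \clps{E(j, T)} \to B$ with $\copi_{E(j, T)} \d []_{(a, \oop)} = a$ and factoring $\phi$ through the cartesian $2$-cell; unwinding \cref{opalgebras-via-EjT}, this factorisation of $\phi$ is exactly the equation $\oop_T \d []_{(a, \oop)} = \oop$. Any tight-cell satisfying these two equations factors $(a, \phi)$, so uniqueness follows from the uniqueness clause of \cref{exact-ve}.

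For the two-dimensional universal property, I would first identify the collapses of the relevant modules: for a $T$-opalgebra $(a \colon A \to B, \oop)$, the second clause of \cref{exact-ve} gives $\copi_{B(1, 1)} = 1_B$, so the hom-restriction $B(1, []_{(a, \oop)})$ restricts along $(1_B, \copi_{E(j, T)})$ to $B(1, a)$ and is therefore, by \cref{loose-cell-between-collapses-is-collapse}, the collapse of the module $B(1, a)$; symmetrically the collapse of $B'(a', 1)$ is $B'([]_{(a', \oop')}, 1)$, and each ungraded $p_i$ is its own collapse. Now, given a graded $T$-opalgebra morphism $\alpha \colon p_1, \ldots, p_n, B(1, a) \tto B'(1, a')$, \cref{opalgebras-via-EjT} presents it as a loose-monad transformation $\psi$ of the displayed shape; \cref{module-collapse} (with the fourth clause of \cref{exact-ve}) supplies a unique factoring $2$-cell $p_1, \ldots, p_n, B(1, []_{(a, \oop)}), B'([]_{(a', \oop')}, 1) \tto B'(1, 1)$, which transposes across the loose-adjunction $B'(1, []_{(a', \oop')}) \adj B'([]_{(a', \oop')}, 1)$ to a $2$-cell of the shape required of $[]_\alpha$. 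Transporting the cartesian pasting equation of \cref{module-collapse} through these bijections recovers the pasting equation of \cref{opalgebra-object}, uniqueness transfers in the same way, and the nullary case $n = 0$ is handled identically using the $n = 0$ clause of the loose-monad transformation laws.

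The step I expect to be the main obstacle is the bookkeeping in the two-dimensional part: matching the chain $B'(1, 1), p_1, \ldots, p_n, B(1, a), B'(a', 1)$ of modules appearing in $\psi$ against the data of \cref{module-collapse}, checking that the left and right module actions on $B(1, a)$ and $B'(a', 1)$ induced by the opalgebra structures coincide with those used in \cref{opalgebras-via-EjT}, and verifying that the cartesian-$2$-cell pasting produced by exactness translates, under the strict-restriction conventions of \cref{strict-ve}, into the pasting appearing in \cref{opalgebra-object}. The one-dimensional clause and the identification of the relevant collapses are routine once these conventions are pinned down.
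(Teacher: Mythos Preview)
Your proposal is correct and follows essentially the same approach as the paper: both arguments transport the opalgebra-object universal property across \cref{opalgebras-via-EjT} and read it off directly from the exactness axioms, with the one-dimensional clause coming from the collapse of the loose-monad and the two-dimensional clause from \cref{module-collapse} together with \cref{loose-cell-between-collapses-is-collapse}. The only noticeable difference is presentational: where the paper simply asserts that the factored $2$-cell with $\clps{B'(a',1)}$ on the right ``is equivalent to'' a $2$-cell of the required shape $p_1, \ldots, p_n, B(1, []_\oop) \tto B'(1, []_{\oop'})$, you make the companion--conjoint transposition $B'(1, []_{(a',\oop')}) \adj B'([]_{(a',\oop')}, 1)$ explicit, which is arguably cleaner.
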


\begin{proof}
    For convenience, we shall denote by $\copi$ the tight-cell $\copi_{E(j, T)}$. We shall show that the canonical opcartesian 2-cell $E(j, t) \tto \clps{E(j, T)}(\copi, \copi)$ exhibits the opalgebra object.

    By definition, we have that $E(j, T) \tto \clps{E(j, T)}(\copi, \copi)$ is a loose-monad morphism, hence is an opalgebra by \cref{opalgebras-via-EjT}. Let $(a, \oop)$ be a $T$-opalgebra, \ie{} a loose-monad morphism ${E(j, T) \tto B(a, a)}$. The universal property of the collapse gives a unique factorisation of $a$ through $\copi$, and $\oop$ through the cartesian 2-cell defining the collapse, as below on the left. This is equivalent to a unique factorisation of $\oop$ through the induced opcartesian 2-cell (by definition of opcartesianness), as below on the right, and hence exhibits the one-dimensional universal property of an opalgebra object.
    \[
    \begin{tikzcd}[column sep=huge]
    	A & A \\
    	{\clps{E(j, T)}} & {\clps{E(j, T)}} \\
    	B & B
    	\arrow["{E(j, T)}"', "\shortmid"{marking}, from=1-2, to=1-1]
    	\arrow[""{name=0, anchor=center, inner sep=0}, "\copi"{description}, from=1-1, to=2-1]
    	\arrow[""{name=1, anchor=center, inner sep=0}, "\copi"{description}, from=1-2, to=2-2]
    	\arrow[""{name=2, anchor=center, inner sep=0}, "{[]_a}"{description}, from=2-1, to=3-1]
    	\arrow[""{name=3, anchor=center, inner sep=0}, "{[]_a}"{description}, from=2-2, to=3-2]
    	\arrow["\shortmid"{marking}, Rightarrow, no head, from=3-2, to=3-1]
    	\arrow["\shortmid"{marking}, Rightarrow, no head, from=2-2, to=2-1]
    	\arrow["a"', curve={height=40pt}, from=1-1, to=3-1]
    	\arrow["a", curve={height=-40pt}, from=1-2, to=3-2]
    	\arrow["{[]_\oop}"{description}, draw=none, from=2, to=3]
    	\arrow["\cart"{description}, draw=none, from=0, to=1]
    \end{tikzcd}
    \hspace{6em}
    \begin{tikzcd}[column sep=8em]
    	A & A \\
    	A & A \\
    	A & A
    	\arrow["{E(j, T)}"', "\shortmid"{marking}, from=1-2, to=1-1]
    	\arrow[""{name=0, anchor=center, inner sep=0}, Rightarrow, no head, from=1-1, to=2-1]
    	\arrow[""{name=1, anchor=center, inner sep=0}, Rightarrow, no head, from=1-2, to=2-2]
    	\arrow[""{name=2, anchor=center, inner sep=0}, Rightarrow, no head, from=2-1, to=3-1]
    	\arrow[""{name=3, anchor=center, inner sep=0}, Rightarrow, no head, from=2-2, to=3-2]
    	\arrow["{B(a, a)}", "\shortmid"{marking}, from=3-2, to=3-1]
    	\arrow["{\clps{E(j, T)}(\copi, \copi)}"{description}, from=2-2, to=2-1]
    	\arrow[draw=none, from=2, to=3]
    	\arrow["\opcart"{description}, draw=none, from=0, to=1]
    \end{tikzcd}
    \]

    Now let $\alpha \colon p_1, \ldots, p_n, B(1, a) \tto B'(1, a')$ be a $(p_1, \ldots, p_n)$-graded $T$-opalgebra morphism, which is equivalently a loose-monad transformation of the following form by \cref{opalgebras-via-EjT}.
    \[\begin{tikzcd}
    	{B'(1, 1)} & \cdots & {B(1, 1)} & {E(j, T)} & {B'(1, 1)} \\
    	{B'(1, 1)} &&&& {B'(1, 1)}
    	\arrow[Rightarrow, no head, from=1-1, to=2-1]
    	\arrow["{B(1, a)}"', "\shortmid"{marking}, from=1-4, to=1-3]
    	\arrow["{p_n}"', "\shortmid"{marking}, from=1-3, to=1-2]
    	\arrow["{p_1}"', "\shortmid"{marking}, from=1-2, to=1-1]
    	\arrow["{B'(a', 1)}"', "\shortmid"{marking}, from=1-5, to=1-4]
    	\arrow[Rightarrow, no head, from=1-5, to=2-5]
    	\arrow["\shortmid"{marking}, Rightarrow, no head, from=2-1, to=2-5]
    \end{tikzcd}\]
    Exactness thus gives a unique factorisation of $\alpha$ through the cartesian 2-cells defining the collapses of $B(1, a)$ and $B'(a', 1)$, as below.
    \[\begin{tikzcd}[column sep=6em]
    	{B'} & \cdots & B & A & {B'} \\
    	{B'} & \cdots & B & {\clps{E(j, T)}} & {B'} \\
    	{B'} &&&& {B'}
    	\arrow["{B(1, a)}"', "\shortmid"{marking}, from=1-4, to=1-3]
    	\arrow["{p_n}"', "\shortmid"{marking}, from=1-3, to=1-2]
    	\arrow["{p_1}"', "\shortmid"{marking}, from=1-2, to=1-1]
    	\arrow["{B'(a', 1)}"', "\shortmid"{marking}, from=1-5, to=1-4]
    	\arrow["\shortmid"{marking}, Rightarrow, no head, from=3-1, to=3-5]
    	\arrow[Rightarrow, no head, from=2-1, to=3-1]
    	\arrow["{p_n}"', "\shortmid"{marking}, from=2-3, to=2-2]
    	\arrow["{p_1}"', "\shortmid"{marking}, from=2-2, to=2-1]
    	\arrow[""{name=0, anchor=center, inner sep=0}, Rightarrow, no head, from=1-1, to=2-1]
    	\arrow[""{name=1, anchor=center, inner sep=0}, Rightarrow, no head, from=1-3, to=2-3]
    	\arrow["{\clps{B(1, a)}}"{description}, from=2-4, to=2-3]
    	\arrow[Rightarrow, no head, from=2-5, to=3-5]
    	\arrow["{\clps{B'(a', 1)}}"{description}, from=2-5, to=2-4]
    	\arrow[""{name=2, anchor=center, inner sep=0}, Rightarrow, no head, from=1-5, to=2-5]
    	\arrow[""{name=3, anchor=center, inner sep=0}, "\copi"{description}, from=1-4, to=2-4]
    	\arrow["{=}"{description}, draw=none, from=0, to=1]
    	\arrow["\cart"{description}, draw=none, from=1, to=3]
    	\arrow["\cart"{description}, draw=none, from=3, to=2]
    \end{tikzcd}\]
    By \cref{loose-cell-between-collapses-is-collapse}, we have $\clps{B(1, a)} \iso B(1, []_a)$ and $\clps{B'(a', 1)} \iso B'([]_{a'}, 1)$, and so this is equivalent to a factorisation of $\alpha$ through $\copi$ as below, which thus exhibits the collapse as satisfying the two-dimensional universal property of an opalgebra object for $T$.
    \[\begin{tikzcd}
    	{B'} & \cdots & B & {\clps{E(j, T)}} \\
    	{B'} &&& {\clps{E(j, T)}}
    	\arrow["{B'(1, []_{\oop'})}", "\shortmid"{marking}, from=2-4, to=2-1]
    	\arrow[Rightarrow, no head, from=1-1, to=2-1]
    	\arrow["{p_n}"', "\shortmid"{marking}, from=1-3, to=1-2]
    	\arrow["{p_1}"', "\shortmid"{marking}, from=1-2, to=1-1]
    	\arrow["{B(1, []_\oop)}"', "\shortmid"{marking}, from=1-4, to=1-3]
    	\arrow[Rightarrow, no head, from=1-4, to=2-4]
    \end{tikzcd}\]
\end{proof}

As a consequence, every exact \ve{} admits opalgebra objects for relative monads. However, we shall see that, since the universal property of a collapse is stronger than that of an opalgebra object, opalgebra objects in exact equipments are particularly well behaved.

\subsection{Algebra objects via collapse}
\label{algebras-from-collapse}

We now consider algebras. In contrast to the consideration of opalgebras in \cref{opalgebras-and-exact-ves}, it will be necessary here to assume that $j$ is dense. We first observe that $T$-algebras and their graded morphisms may be characterised in terms of the loose-monad $E(j, T)$. In contrast to opalgebras (\cref{opalgebras-via-EjT}), algebras may not be characterised in terms of loose-monad morphisms: instead, they may be characterised in terms of loose-monad modules. (Note that, by taking companions, we could also have characterised $T$-opalgebras in terms of loose-monad modules $E(j, T) \lto B(1, 1)$.)

\begin{lemma}\label{algebras-via-EjT}
  Let $\jAE$ be a dense tight-cell and let $T$ be a $j$-monad. For each tight-cell $e \colon D \to E$, there are bijections between
  \begin{itemize}
    \item $T$-algebra structures on $e$, and loose-monad modules $D(1, 1) \lto E(j, T)$ with carrier $E(j, e) \colon D \lto A$;
    \item $(p_1, \ldots, p_n)$-graded $T$-algebra morphisms as on the left below, and loose-monad transformations as on the right below.
      \[
\begin{tikzcd}
	E & D & \cdots & {D'} \\
	E &&& {D'}
	\arrow[Rightarrow, no head, from=1-1, to=2-1]
	\arrow["{p_n}"', "\shortmid"{marking}, from=1-4, to=1-3]
	\arrow["{p_1}"', "\shortmid"{marking}, from=1-3, to=1-2]
	\arrow["{E(1, e)}"', "\shortmid"{marking}, from=1-2, to=1-1]
	\arrow["{E(1, e')}", "\shortmid"{marking}, from=2-4, to=2-1]
	\arrow[Rightarrow, no head, from=1-4, to=2-4]
\end{tikzcd}
\qquad
\begin{tikzcd}
	{E(j, T)} & {D(1, 1)} & \cdots & {D'(1, 1)} \\
	{E(j, T)} &&& {D'(1, 1)}
	\arrow[Rightarrow, no head, from=1-1, to=2-1]
	\arrow["{p_n}"', "\shortmid"{marking}, from=1-4, to=1-3]
	\arrow["{p_1}"', "\shortmid"{marking}, from=1-3, to=1-2]
	\arrow["{E(j, e)}"', "\shortmid"{marking}, from=1-2, to=1-1]
	\arrow["{E(j, e')}", "\shortmid"{marking}, from=2-4, to=2-1]
	\arrow[Rightarrow, no head, from=1-4, to=2-4]
\end{tikzcd}
\]
  \end{itemize}
\end{lemma}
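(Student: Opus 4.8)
The plan is to deduce both bijections from a single reformulation of density. Density of $j$ says precisely that for all tight-cells $f, g$ into $E$ the canonical comparison exhibits $E(f, g)$ as a right extension relative to $j$, in the sense that $2$-cells $s \tto E(f, g)$ correspond naturally to $2$-cells $E(j, f), s \tto E(j, g)$ for every loose-cell $s$ of matching shape; this is the only use we make of the hypothesis that $j$ is dense. Everything else rests on the fact that $D(1, 1), D'(1, 1), \dots$ are loose-identities: a loose-identity absorbs into any adjacent loose-cell by a canonical $2$-cell, so every right $D(1, 1)$-action, and every bimodule structure on the $p_i$ over loose-identities, is this canonical one, with automatic laws.

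\textbf{First bijection.} I would first unfold a loose-monad module $D(1, 1) \lto E(j, T)$ with carrier $E(j, e)$: it amounts to a left $E(j, T)$-action $\lambda \colon E(j, t), E(j, e) \tto E(j, e)$ subject to the left-module unit and associativity laws, together with a right $D(1, 1)$-action and the left--right compatibility. Since $D(1, 1)$ is a loose-identity, the right-unit law forces the right action to be the canonical absorption $2$-cell, so the three remaining laws hold automatically; thus such modules are exactly left $E(j, T)$-module structures on $E(j, e)$. A $T$-algebra structure on $e$ is, on the other hand, a $2$-cell $\aop \colon E(j, e) \tto E(t, e)$ satisfying the two $T$-algebra laws, and the right-extension property with $f = t$, $g = e$, $s = E(j, e)$ transposes $\aop$ bijectively to a $2$-cell $E(j, t), E(j, e) \tto E(j, e)$. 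It then remains to verify that, along this transposition, the $T$-algebra unit law becomes the left-module unit law and the $T$-algebra compatibility law with $\dag$ becomes the left-module associativity law. I expect this to be the crux: one must unwind the loose-monad structure on $E(j, T)$, which is given only implicitly through the $E(t, t)$-section description of \cref{relative-monad-as-section} and \cref{from-monad-to-associated-loose-monad}, and run a diagram chase (most cleanly in string diagrams) to match the two pairs of equations; naturality of the transposition is what carries an equation of pasted $2$-cells on one side to its counterpart on the other. At the level of $\Cat$ this chase is essentially the computation in the proof of \cref{unenriched-pullback-theorem}.

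\textbf{Second bijection.} By \cref{alternative-graded-morphism}, a $(p_1, \dots, p_n)$-graded $T$-algebra morphism from $(e, \aop)$ to $(e', \aop')$ is a $2$-cell $\epsilon \colon p_1, \dots, p_n \tto E(e, e')$ subject to a single compatibility law with $\aop, \aop'$, and the right-extension property with $f = e$, $g = e'$, $s = (p_1, \dots, p_n)$ transposes such $\epsilon$ bijectively to $2$-cells $\psi \colon E(j, e), p_1, \dots, p_n \tto E(j, e')$. A loose-monad transformation of the displayed shape is exactly such a $\psi$ satisfying the transformation laws of \cref{LMnd}; as every loose-monad in that chain other than $E(j, T)$ is a loose-identity, all of those laws reduce to coherence and hold automatically save the one asserting compatibility of $\psi$ with the left $E(j, T)$-actions on $E(j, e)$ and $E(j, e')$. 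Under the transposition, and using the identification of those actions with $\aop, \aop'$ from the first bijection, this lone surviving law is precisely the $T$-algebra morphism compatibility. That the two assignments are mutually inverse is then formal, by naturality of the transpositions and uniqueness of the module data over the loose-identities.

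In short, the main obstacle is the bookkeeping inside the first bijection: reconciling the two $T$-algebra laws with the two module laws forces one to expand the implicit description of the loose-monad $E(j, T)$ and to keep track of the naturality of the right-extension transposition. The second bijection and the inverse-ness then follow by routine checks.
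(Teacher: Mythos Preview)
Your approach is essentially the same as the paper's: transpose along density (turning $2$-cells into $E(t,e)$ or $E(e,e')$ into $2$-cells with source $E(j,t)$ or $E(j,e)$ and target $E(j,e)$ or $E(j,e')$), then match the algebra laws with the module laws and the single morphism law with the single transformation law; you are also right that the right $D(1,1)$-action and the intermediate bimodule laws collapse because loose-identities are involved. The paper's proof simply asserts the law correspondences without the diagram chase you anticipate as the crux, and for the second bijection it works directly with the form $E(1,e),p_1,\dots,p_n \tto E(1,e')$ rather than passing through \cref{alternative-graded-morphism}, but this is a cosmetic difference.
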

\begin{proof}
  A $T$-algebra structure on $e$ is a 2-cell $E(j, e) \tto E(t, e)$ compatible with the unit and extension operators of $T$.
  A 2-cell $E(j, e) \tto E(t, e)$ is equivalently a 2-cell $E(1, t), E(j, e) \tto E(1, e)$, and hence, by density of $j$, equivalently a 2-cell $E(j, t), E(j, e) \tto E(j, e)$.
  The unit and extension laws for a $T$-algebra then correspond to the unit and multiplication laws of a loose-monad module.

  Similarly, 2-cells $E(1, e), p_1, \dots, p_n \tto E(1, e')$ are, by density of $j$, in bijection with 2-cells $E(j, e), p_1, \dots, p_n \tto E(j, e')$.
  The former is a $T$-algebra morphism exactly when the latter is a loose-monad transformation.
\end{proof}

Perhaps surprisingly, the asymmetry between algebras and opalgebras means that universal algebras are characterised quite differently (with respect to the associated loose-monad) to universal opalgebras. In fact, universal algebras are characterised in terms of universal opalgebras, together with the following virtual double categorical limit notion.

\begin{definition}
    \label{semanticiser}
    Let $n \colon E \lto A$ be a loose-cell and let $k \colon A \to K$ be a tight-cell.
    A \emph{semanticiser} of $k$ relative to $n$ comprises an object $n \times_A k$, a tight-cell $\pi_1 \colon n \times_A k \to E$, and a loose-cell $\pi_2 \colon n \times_A k \lto K$ satisfying $\pi_2(k, 1) = n(1, \pi_1)$, which is universal in the following sense.
    \[\begin{tikzcd}
    	{n \times_A k} & K \\
    	E & A
    	\arrow["n"', "\shortmid"{marking}, from=2-1, to=2-2]
    	\arrow["k"', from=2-2, to=1-2]
    	\arrow["{\pi_1}"', from=1-1, to=2-1]
    	\arrow["{\pi_2}", "\shortmid"{marking}, from=1-1, to=1-2]
    \end{tikzcd}\]
    \begin{enumerate}
        \item \label{semanticiser-1} For each tight-cell $e \colon \cdot \to E$ and loose-cell $p \colon \cdot \lto K$ such that $p(k, 1) = n(1, e)$, there is a unique tight-cell $\tp{e, p} \colon \cdot \to n \times_A k$ such that $\tp{e, p} \d \pi_1 = e$ and $\pi_2(1, \tp{e, p}) = p$.
        \[\begin{tikzcd}
        	\cdot \\
        	& {n \times_A k} & K \\
        	& E & A
        	\arrow["n"', "\shortmid"{marking}, from=3-2, to=3-3]
        	\arrow["k"', from=3-3, to=2-3]
        	\arrow["{\pi_1}"{description}, from=2-2, to=3-2]
        	\arrow["{\pi_2}"{description}, from=2-2, to=2-3]
        	\arrow["{\tp{e, p}}"{description}, dashed, from=1-1, to=2-2]
        	\arrow["p", "\shortmid"{marking}, curve={height=-12pt}, from=1-1, to=2-3]
        	\arrow["e"', curve={height=12pt}, from=1-1, to=3-2]
        \end{tikzcd}\]
        \item \label{semanticiser-2} For each pair of 2-cells
          \[
            \chi_1 \colon E(1, e), s_1, \dots, s_n \tto E(1, e')
            \qquad
            \chi_2 \colon p, s_1, \dots, s_n \tto p'
            \qquad
            (n \geq 0)
          \]
          that induce the same 2-cell $n(1, e), s_1, \dots, s_n \tto n(1, e')$ by composition with $n$ and $K(k, 1)$ respectively, there is a unique 2-cell \[\tp{\chi_1, \chi_2} \colon ({n \times_A k})(1, \tp{e, p}), s_1, \dots, s_n \tto ({n \times_A k})(1, \tp{e', p'})\] that induces $\chi_1$ and $\chi_2$ by composition with $E(1, \pi_1)$ and $\pi_2$ respectively. \qedhere
    \end{enumerate}
\end{definition}

\begin{remark}
    The name \emph{semanticiser} is intended to be suggestive of the operation of taking the semantics of an algebraic theory in \citeauthor{lawvere1963functorial}'s \ssa{} and in \citeauthor{linton1969outline}'s subsequent generalisation. We will expand upon this connection in future work.
\end{remark}

We observe that \cref{semanticiser} simplifies in examples of interest. First, we must introduce the notion of density for a loose-cell.

\begin{definition}
    \label{dense-loose-cell}
    A loose-cell $p \colon A \lto B$ is \emph{dense} when the identity 2-cell $1_p \colon p \tto p$ exhibits $A(1, 1)$ as the right lift $p \rf p$.
\end{definition}

Observe that a tight-cell $\jAE$ is dense in the sense of \cite[Definition~3.19]{arkor2024formal} if and only if its conjoint $E(j, 1) \colon E \lto A$ is dense in the sense of \cref{dense-loose-cell}.

\begin{lemma}
    \label{semanticiser-density}
    Consider a square of the following shape, such that $\pi_2(k, 1) = n(1, e)$ and in which $n$ is dense.
    \[\begin{tikzcd}
    	{n \times_A k} & K \\
    	E & A
    	\arrow["n"', "\shortmid"{marking}, from=2-1, to=2-2]
    	\arrow["k"', from=2-2, to=1-2]
    	\arrow["{\pi_1}"', from=1-1, to=2-1]
    	\arrow["{\pi_2}", "\shortmid"{marking}, from=1-1, to=1-2]
    \end{tikzcd}\]
    Then \cref{semanticiser-2} is equivalent to the condition that, for every pair of tight-cells $e$ and $e'$ with codomain $E$, every pair of loose-cells $p$ and $p'$ with codomain $K$, and every 2-cell
    \[
        \chi_2 \colon p, s_1, \dots, s_n \tto p'
        \qquad
        (n \geq 0)
    \]
    such that $p(k, 1) = n(1, e)$ and $p'(k, 1) = n(1, e')$, there is a unique 2-cell \[({n \times_A k})(1, \tp{e, p}), s_1, \dots, s_n \tto ({n \times_A k})(1, \tp{e', p'})\] that induces $\chi_2$ by composition with $\pi_2$.
    Consequently, the above square is a semanticiser if and only if it satisfies \cref{semanticiser-1} and $\pi_2$ is dense.
\end{lemma}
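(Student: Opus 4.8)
The plan is to reduce the lemma to the bijection-of-2-cells characterisation of density already exploited in the proof of \cref{algebras-via-EjT}: when $n \colon E \lto A$ is dense, composition with the canonical density 2-cell identifies 2-cells whose relevant boundaries are taken along $E$ (through the restrictions $E(1, {-})$) with the corresponding 2-cells whose boundaries are taken along $n$ (through the restrictions $n(1, {-})$). For $n$ a conjoint this is the consequence of tight-cell density used in \cref{algebras-via-EjT}; for a general dense loose-cell it is the analogous consequence of the universal property of the right lift $n \rf n = A(1, 1)$, and the same characterisation will be applied to $\pi_2 \rf \pi_2 = ({n \times_A k})(1, 1)$ at the end.

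First I would prove that \cref{semanticiser-2} is equivalent to the displayed simplified condition. In a compatible pair $(\chi_1, \chi_2)$ as in \cref{semanticiser-2}, the datum $\chi_1$ is redundant: its defining compatibility with $\chi_2$ says precisely that the 2-cell $n(1, e), s_1, \dots, s_n \tto n(1, e')$ obtained from $\chi_1$ by composition with $n$ coincides with the one obtained from $\chi_2$ by composition with $K(k, 1)$ (well-defined since $p(k, 1) = n(1, e)$ and $p'(k, 1) = n(1, e')$), and by density of $n$ the first passage is a bijection; hence $\chi_1$ is uniquely recovered from $\chi_2$, and compatible pairs correspond exactly to the 2-cells $\chi_2$ of the simplified condition. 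Under this correspondence the requirement in \cref{semanticiser-2} that $\tp{\chi_1, \chi_2}$ induce $\chi_1$ by composition with $E(1, \pi_1)$ becomes automatic: since $\pi_2(k, 1) = n(1, \pi_1)$, composing $\tp{\chi_1, \chi_2}$ first with $E(1, \pi_1)$ and then with $n$ gives the same 2-cell as composing it first with $\pi_2$ and then with $K(k, 1)$, so density of $n$ forces the $\chi_1$-clause (both existence and uniqueness) from the $\chi_2$-clause. This yields the first assertion.

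For the ``consequently'' part, assume \cref{semanticiser-1}. Then $(e, p) \mapsto \tp{e, p}$ is a bijection from the set of pairs $(e \colon \cdot \to E, p \colon \cdot \lto K)$ with $p(k, 1) = n(1, e)$ onto the set of tight-cells $\cdot \to n \times_A k$, with inverse $x \mapsto (x \d \pi_1, \pi_2(1, x))$ --- the identity $\pi_2(1, x)(k, 1) = n(1, x \d \pi_1)$ certifying that this is a valid pair is just $\pi_2(k, 1) = n(1, \pi_1)$ restricted along $x$, and $\tp{e, p} \d \pi_1 = e$, $\pi_2(1, \tp{e, p}) = p$ supply the two composites of the bijection. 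Rewriting the simplified condition of the previous paragraph along this bijection turns it verbatim into the statement that composition with $\pi_2$ is a bijection from 2-cells $\pi_2(1, x), s_1, \dots, s_n \tto \pi_2(1, x')$ to 2-cells $({n \times_A k})(1, x), s_1, \dots, s_n \tto ({n \times_A k})(1, x')$ for all tight-cells $x, x' \colon \cdot \to n \times_A k$, which is exactly density of $\pi_2$. Hence, under \cref{semanticiser-1}, the simplified condition is equivalent to density of $\pi_2$; combining this with the first assertion and with the square's defining equation $\pi_2(k, 1) = n(1, \pi_1)$ gives that the square is a semanticiser if and only if it satisfies \cref{semanticiser-1} and $\pi_2$ is dense.

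The argument is conceptually short, so the main work is bookkeeping: spelling out the ``compose with $n$'', ``compose with $E(1, \pi_1)$'' and ``compose with $\pi_2$'' operations in terms of the chosen restrictions, checking that $\pi_2(k, 1) = n(1, \pi_1)$ really does equate the two composites appearing in the second paragraph, and confirming that the density bijections --- being whiskering with fixed 2-cells --- are compatible with the parameters $s_1, \dots, s_n$, so that transporting \cref{semanticiser-2} along $(e, p) \leftrightarrow \tp{e, p}$ introduces no further coherence to verify. I expect the only point needing a sentence of independent justification to be the bijection-of-2-cells characterisation of density for a general (non-conjoint) loose-cell, which I have used for $\pi_2$ in both directions; everything else is of the same nature as the computations in the proof of \cref{algebras-via-EjT}.
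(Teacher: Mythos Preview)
Your proposal is correct and follows essentially the same approach as the paper: use density of $n$ to show $\chi_1$ is redundant (and its recovery condition automatic), then identify the simplified condition with density of $\pi_2$. The paper is terser---it reduces the second part directly to the case $x = x' = 1$ (i.e.\ $e = e' = \pi_1$, $p = p' = \pi_2$), recovering the general case by restriction---whereas you carry the general $x, x'$ through via the bijection $(e, p) \leftrightarrow \tp{e, p}$; these are equivalent reductions.
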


\begin{proof}
  Each 2-cell $\chi_2 \colon p, s_1, \dots, s_n \tto p'$ induces a 2-cell $n(1, e), s_1, \dots, s_n \tto n(1, e')$ by postcomposing $K(k, 1)$, but these are in bijection, by density of $n$, with 2-cells $E(1, e), s_1, \dots, s_n \tto E(1, e')$.
  Hence, in \cref{semanticiser-2}, the 2-cell $\chi_2$ uniquely determines $\chi_1$, and the equivalence follows.

  Assuming the square satisfies \cref{semanticiser-1}, it suffices to consider the case in which ${e = e' = \pi_1}$ and $p = p' = \pi_2$; the general case follows by expressing $p$ and $p'$ as restrictions of $\pi_2$.
  The required condition then expresses a bijection between 2-cells $\pi_2, s_1, \dots, s_n \tto \pi_2$ and 2-cells ${s_1, \dots, s_n \tto ({n \times_A k})(1, 1)}$; this bijection is precisely density of $\pi_2$.
\end{proof}

We are now in a position to prove our main result, characterising the algebra object for a relative monad in terms of its associated loose-monad and its opalgebra object $k_T \colon A \to \Opalg(T)$.

\begin{theorem}
    \label{semanticiser-theorem}
    Let $\jAE$ be a dense tight-cell and let $T$ be a $j$-monad.
    If $\X$ is exact, then a tight-cell $u \colon D \to E$ exhibits the algebra object for $T$ if and only if there exists a loose-cell $\pi \colon D \lto \Opalg(T)$ exhibiting the following square as a semanticiser.
    \[\begin{tikzcd}
    	D & {\Opalg(T)} \\
    	E & A
    	\arrow["{E(j, 1)}"', "\shortmid"{marking}, from=2-1, to=2-2]
    	\arrow["{k_T}"', from=2-2, to=1-2]
    	\arrow["u"', from=1-1, to=2-1]
    	\arrow["\pi", "\shortmid"{marking}, from=1-1, to=1-2]
    \end{tikzcd}\]
\end{theorem}

\begin{proof}
  The tight-cell $k_T \colon A \to \Opalg(T)$ exhibits the collapse of $E(j, T)$ by \cref{opalgebra-object-from-collapse}, and, by \cref{algebras-via-EjT}, we can view $T$-algebras and their morphisms equivalently as loose-monad modules and transformations.

  To give a loose-cell $\pi \colon D \lto \Opalg(T)$ such that $\pi(k_T, 1) = E(j, u)$ is thus equivalently to give a $T$-algebra with carrier $u$.
  We show that such a $T$-algebra is the algebra object exactly when the square above is a semanticiser.

  A $j$-representable loose-monad module $E(j, e) \colon D(1, 1) \lto T$ is equivalently a loose-cell $\clps{E(j, e)} \colon D \lto \Opalg(T)$, by taking the collapse.
  Given this data, the universal properties of the semanticiser and of the algebra object both ask for a unique tight-cells $\unit_e \colon \cdot \to D$ such that $e = \unit_d \d u$.
  The semanticiser additionally requires $\pi(1, \unit_e) = \clps{E(j, e)}$, while the algebra object additionally requires $E(j, e)$ to be the module induced by restricting $E(j, u)$ along $\unit_e$.
  These two conditions are equivalent because the collapse of the latter loose-module is $\pi(1, \unit_e)$.

  Let $E(j, e)$ and $E(j, e') \colon D(1, 1) \lto T$ be $j$-representable loose-monad modules.
  A loose-monad transformation $E(j, e), s_1, \dots, s_n \tto E(j, e')$ is equivalently a 2-cell $\chi_2 \colon \clps{E(j, e)}, s_1, \dots, s_n \tto \clps{E(j, e')}$.
  By \cref{semanticiser-density} it follows that the two-dimensional universal properties of the semanticiser and of the algebra object are equivalent.
\end{proof}

\begin{remark}
    From \cref{semanticiser-theorem}, taking $j = 1$ and assuming $\X$ is representable, we recover \cite[Proposition~7]{wood1985proarrows}, in which semanticisers relative to $A(1, 1)$ are called \emph{universal kones}.
\end{remark}

Recall that the opalgebra and algebra objects for a $j$-monad $T$ form initial and terminal resolutions $k_T \jadj v_T$ and $f_T \jadj u_T$ respectively~\cite[Corollary~6.41 \& Corollary~6.51]{arkor2024formal}. As a consequence, they are related by a unique morphism of resolutions $i_T \colon \Opalg(T) \to \Alg(T)$~\cite[Definition~5.23 \& Remark~6.54]{arkor2024formal}, the comparison tight-cell, which renders the following diagram commutative.
\[\begin{tikzcd}[row sep=small]
	& {\Opalg(T)} \\
	A && E \\
	& {\Alg(T)}
	\arrow["{i_T}"{description}, dashed, from=1-2, to=3-2]
	\arrow["{k_T}", from=2-1, to=1-2]
	\arrow["{f_T}"', from=2-1, to=3-2]
	\arrow["{v_T}", from=1-2, to=2-3]
	\arrow["{u_T}"', from=3-2, to=2-3]
\end{tikzcd}\]
What is remarkable about \cref{semanticiser-theorem} is that, in the setting of exact \ve{}s, it exhibits a \emph{joint} universal property of the opalgebra and algebra objects, mediated by the comparison tight-cell. As a consequence, in this setting, $i_T$ is particularly well behaved. We summarise this observation in the following proposition.

\begin{proposition}
    \label{density-of-comparison}
    Let $j \colon A \to E$ be a dense tight-cell and let $T$ be a $j$-monad admitting an algebra object.
    Assume that $\X$ is exact.
    Then the loose-cell $\Alg(T)(i_T, 1) \colon \Alg(T) \lto \Opalg(T)$ is isomorphic to a loose-cell $\pi$ that exhibits the following square as a semanticiser.
    \[\begin{tikzcd}
    	{\Alg(T)} & {\Opalg(T)} \\
    	E & A
    	\arrow["{E(j, 1)}"', "\shortmid"{marking}, from=2-1, to=2-2]
    	\arrow["{k_T}"', from=2-2, to=1-2]
    	\arrow["{u_T}"', from=1-1, to=2-1]
    	\arrow["\pi", "\shortmid"{marking}, from=1-1, to=1-2]
    \end{tikzcd}\]
    Consequently, the comparison tight-cell $i_T \colon \Opalg(T) \to \Alg(T)$ is dense and \ff{}.
\end{proposition}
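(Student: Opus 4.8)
The plan is to derive the whole statement from \cref{semanticiser-theorem} by identifying the semanticiser loose-cell with the conjoint of the comparison tight-cell. Since $j$ is dense, its conjoint $E(j, 1)$ is dense by the observation following \cref{dense-loose-cell}, so \cref{semanticiser-density} applies to the square in question. As $T$ admits an algebra object by hypothesis, \cref{semanticiser-theorem} produces a loose-cell $\pi \colon \Alg(T) \lto \Opalg(T)$ with $\pi(k_T, 1) = E(j, u_T)$ exhibiting the displayed square as a semanticiser; by \cref{semanticiser-density} this forces $\pi$ to be dense. It then suffices to prove $\pi \iso \Alg(T)(i_T, 1)$: density of $\pi$ transports along this isomorphism to density of the conjoint of $i_T$, hence — again by the observation following \cref{dense-loose-cell}, now applied to $i_T$ — to density of $i_T$ itself.

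To compare the two loose-cells I will exploit that $\Opalg(T)$ is a collapse. By \cref{opalgebra-object-from-collapse}, $k_T \colon A \to \Opalg(T)$ exhibits $\Opalg(T)$ as $\clps{E(j, T)}$; and since, by \cref{exact-ve}, every object is the collapse of its own loose-identity, \cref{loose-cell-between-collapses-is-collapse} shows that \emph{any} loose-cell with codomain $\Opalg(T)$ is equal to the collapse of its restriction along $k_T$. Applied to $\pi$ this recovers $\pi = \clps{E(j, u_T)}$, the collapse of the loose-monad module that, under \cref{algebras-via-EjT}, corresponds to the universal $T$-algebra structure $\aop_T$ on $u_T$; applied to $\Alg(T)(i_T, 1)$, together with $f_T = k_T \d i_T$ and strict functoriality of restriction, it gives $\Alg(T)(i_T, 1) = \clps{\Alg(T)(i_T, 1)(k_T, 1)} = \clps{\Alg(T)(f_T, 1)}$. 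The relative adjunction $f_T \jadj u_T$ supplies an isomorphism $\Alg(T)(f_T, 1) \iso E(j, u_T)$, and the key point is that this is an isomorphism of loose-monad modules over $E(j, T)$, matching the module structure induced by the opalgebra structure carried by $f_T$ with the one induced by $\aop_T$. Applying $\clps{{-}}$ then yields $\Alg(T)(i_T, 1) \iso \pi$.

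For full faithfulness I run the same computation one step further. We have $\Alg(T)(i_T, i_T) = \Alg(T)(i_T, 1)(1, i_T)$, which restricts along $(k_T, k_T)$ to $\Alg(T)(f_T, f_T) \iso E(j, f_T \d u_T) = E(j, t)$ — the loose-monad $E(j, T)$ with its tautological bimodule structure — using $f_T \jadj u_T$ once more; so by \cref{loose-cell-between-collapses-is-collapse} and \cref{exact-ve} it is the loose-identity $\Opalg(T)(1, 1)$. Checking that the canonical 2-cell $\pc{i_T} \colon \Opalg(T)(1, 1) \tto \Alg(T)(i_T, i_T)$ restricts along $(k_T, k_T)$ to the identity under these identifications — and invoking the two-dimensional universal property of the collapse to conclude that a 2-cell between collapses with identity restriction is itself invertible — shows $\pc{i_T}$ is invertible, i.e.\ $i_T$ is \ff{}.

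The hard part will be the module-compatibility assertion in the second paragraph (and its analogue in the third): that the relative-adjunction isomorphism $\Alg(T)(f_T, 1) \iso E(j, u_T)$ respects the $E(j, T)$-actions. Concretely this asks that the comparison $i_T$, qua the unique morphism from the initial resolution $k_T \jadj v_T$ to the terminal resolution $f_T \jadj u_T$, intertwines the opalgebra structure on $f_T$ with the algebra structure $\aop_T$ on $u_T$ in exactly the way needed to identify the two module structures. This is a routine but delicate unwinding of the universal properties of opalgebra and algebra objects from \cite{arkor2024formal}, and is the one place where the argument is not pure formal manipulation of collapses.
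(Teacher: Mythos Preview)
Your proposal is correct and follows essentially the same route as the paper: both arguments identify $\pi$ with $\clps{E(j,u_T)}$ via \cref{loose-cell-between-collapses-is-collapse}, compare it to $\Alg(T)(i_T,1)=\clps{\Alg(T)(f_T,1)}$ through the loose-monad-module isomorphism $\Alg(T)(f_T,1)\iso E(j,u_T)$ coming from $f_T\jadj u_T$, and then read off density of $i_T$ from \cref{semanticiser-density}. The only real difference is in the treatment of \ffness{}: the paper dispatches this by citing \cite[Corollary~3.28]{arkor2024formal} and writing down a chain of isomorphisms $\Opalg(T)(1,1)\iso\Alg(T)(i_T,i_T)$ built from the collapse identities, whereas you unwind this directly, checking that $\pc{i_T}$ itself restricts to an isomorphism along $(k_T,k_T)$ and appealing to the two-dimensional universal property of the collapse --- a slightly more self-contained argument that avoids the external citation. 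You are also right to flag the module-compatibility of the relative-adjunction isomorphism as the one substantive verification; the paper asserts this without comment.
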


\begin{proof}
  Since $k_T \colon A \to \Opalg(T)$ exhibits the collapse of the loose-monad $E(j, T)$, the restriction $\Alg(T)(i_Tk_T, 1) = \Alg(T)(f_T, 1)$ forms a loose-monad module $\Alg(T)(f_T, 1) \colon \Alg(T)(1, 1) \lto E(j, T)$, whose collapse is the loose-cell $\Alg(T)(i_T, 1)$.
  There is an isomorphism $\Alg(T)(f_T, 1) \iso E(j, u_T)$ of loose-monad modules, and hence an isomorphism $\Alg(T)(i_T, 1) \iso \clps{\Alg(T)(j, u_T)}$ between their collapses.
  We can therefore take the collapse $\clps{\Alg(T)(j, u_T)}$ for $\pi$ by \cref{semanticiser-theorem}.

  Density of $i_T$ is immediate from density of $\pi$, which follows from \cref{semanticiser-density}.
  For \ffness{} of $i_T$, it suffices by \cite[Corollary~3.28]{arkor2024formal} to exhibit an isomorphism $\Opalg(T)(1, 1) \iso \Alg(T)(i_T, i_T)$, which we do as follows.
  \begin{align*}
    \Opalg(T)(1, 1)
    =
    \clps{E(j, T)}
    =
    \clps{E(j, u_T)(1, i_Tk_T)}
    =
    \clps{E(j, u_T)}(1, i_T)
    \iso
    \Alg(T)(i_T, i_T)
  \end{align*}
\end{proof}

\section{The formal nerve theorem}
\label{the-pullback-theorem}

The semanticiser theorem of \cref{semanticiser-theorem} may be viewed as a formulation of the nerve theorem that involves neither pullbacks nor presheaves. This makes it particularly general, as it holds even in settings in which neither exist. However, it is also a somewhat less convenient formulation, as semanticisers are an unfamiliar notion. In this section, we give sufficient conditions for semanticisers to exist in a \ve{}, showing that it suffices for presheaf objects and pullbacks to exist. This provides the final step in our formal understanding of the nerve theorem, allowing us, in \cref{pullback-theorem}, to capture the algebra object for a relative monad as a pullback of the expected form.

\subsection{Presheaves in \ve{}s}

We begin by defining the notion of a presheaf object in a \ve{}, which classifies loose-cells into an object $A$ by tight-cells into a corresponding object of presheaves $\P A$.

\begin{definition}
  \label{presheaf-object}
  Let $A$ be an object of a strict \ve{} $\X$. A \emph{presheaf object} for $A$ comprises an object $\P A$ and a dense loose-cell $\pi_A \colon \P A \lto A$, such that, for every loose-cell $p \colon X \lto A$, there is a unique tight-cell $\breve p \colon X \to \P A$ satisfying $p = \pi_A(1, \breve p)$.
  \begin{enumerate}
      \item We denote by
      \[\yo_A \defeq \widebreve{A(1, 1)} \colon A \to \P A\]
      the \emph{presheaf embedding}, which is the unique tight-cell satisfying $A(1, 1) = \pi_A(1, \yo_A)$.
      \item For a tight-cell $j \colon A \to E$, we denote by
      \[n_j \defeq \widebreve{E(j, 1)} \colon E \to \P A\]
      the \emph{nerve of $j$}, which is the unique tight-cell satisfying $E(j, 1) = \pi_A(1, n_j)$. (By definition, $\yo_A$ is the nerve of the identity $1_A$.)
      \item For $k \colon A \to B$ a tight-cell between objects admitting presheaf objects, we denote by
      \[k^* \defeq \widebreve{\pi_B(k, 1)} \colon \P B \to \P A\]
      the \emph{restriction\footnotemark{} along $k$}, which is the unique tight-cell satisfying $\pi_B(k, 1) = \pi_A(1, k^*)$. \qedhere
      \footnotetext{Here, \emph{restriction} is used in a different sense than the restriction $p(f, g)$ of a loose-cell along a pair of tight-cells, but there is no ambiguity between the two usages.}
  \end{enumerate}
\end{definition}

\begin{remark}
    \label{presheaf-object-as-limit}
    Observe that \cref{presheaf-object} is essentially a double categorical (or, more precisely, an equipment-theoretic) notion of limit over a diagram with a single object $A$, where the projection $\pi_A \colon \P A \lto A$ is required to be loose, and the unique mediating morphisms $\breve p \colon X \to \P A$ are required to be tight: the difference in tightness between the projections and mediating morphisms is what permits the limit of a one-object diagram to be nontrivial. However, \cref{presheaf-object} is not a limit in the sense of \cite{grandis1999limits} or \cite{patterson2024products}, nor, as far as we are aware, in any other sense defined in the literature. This suggests it may be fruitful to study more general notions of double limit capturing this one. (Note that a general definition would be expected also to impose the two-dimensional universal property of \cref{presheaf-two-dimensional-UP}, which is automatic under our assumptions.)
\end{remark}

\begin{remark}
    \label{non-strict-presheaf-objects}
    By virtue of \cref{Yoneda}, our definition of presheaf object is a strict analogue of the \emph{Yoneda embeddings} of \cite[Definition~1.1.2.2.1]{kern2021categorified}, and of the dual to the \emph{Yoneda morphisms} of \cite[Definition~4.2]{koudenburg2024formal} (the difference in variance is due to \citeauthor{koudenburg2024formal}'s convention for the direction of distributors, which is opposite to ours). \citeauthor{kern2021categorified} and \citeauthor{koudenburg2024formal} do not require uniqueness of the mediating tight-cell $\breve p \colon X \to \P A$, though density of $\pi_A$ implies that it is automatically essentially unique. Our (strict) presheaf objects are thus analogous to 2-categorical limits (\aka{} 2-limits), whereas \citeauthor{kern2021categorified}'s and \citeauthor{koudenburg2024formal}'s (non-strict) presheaf objects are analogous to bicategorical limits (\aka{} bilimits).

    In a strict representable \ve{}, our definition coincides with the \emph{power objects} of \cite[Definition~11.4]{lambert2022double} assuming density of $\P A \lto A$, which are motivated by the \emph{power allegories} of \cite[\S2.41]{freyd1990categories}. Similarly, by \cref{presheaf-via-adjunction}, presheaf objects are strongly related to the \emph{power objects} of \cite[\S2.5]{marmolejo2009duality} and \emph{strict representability} of tight-cells in the sense of \cites[11]{pare2021double}[22]{pare2022horizontal}, though neither definition require density of $\pi_A$ (equivalently of $\yo_A$). Related also are the \emph{Yoneda situations} of \cite[Definition~6]{mellies2008free}, though these do not classify loose-cells, but merely exhibit an embedding of tight-cells $X \to \P A$ into loose-cells $X \lto A$ (\cf~\cite[\S7(2)]{street1978yoneda}).
\end{remark}

\begin{lemma}
    \label{presheaf-hom-is-right-lift}
    Let $A$ be an object admitting a presheaf object. For every pair of loose-cells $p \colon X \lto A$ and $q \colon Y \lto A$, the canonical 2-cell
    \[\begin{tikzcd}
    	& X \\
    	Y && A
    	\arrow[""{name=0, anchor=center, inner sep=0}, "q"', "\shortmid"{marking}, from=2-1, to=2-3]
    	\arrow["p", "\shortmid"{marking}, from=1-2, to=2-3]
    	\arrow["{\P A(\breve p, \breve q)}", "\shortmid"{marking}, from=2-1, to=1-2]
    	\arrow[shorten <=3pt, shorten >=6pt, Rightarrow, from=1-2, to=0]
    \end{tikzcd}\]
    exhibits $\P A(\breve p, \breve q)$ as the right lift $q \rf p$ of $q$ through $p$.
\end{lemma}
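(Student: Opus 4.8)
The plan is to reduce the statement to the density of the presheaf projection $\pi_A$, together with the standard fact that right lifts are stable under restriction along tight-cells. By \cref{presheaf-object} we have $p = \pi_A(1, \breve p)$ and $q = \pi_A(1, \breve q)$, and $\pi_A$ is dense, so by \cref{dense-loose-cell} the identity $2$-cell $1_{\pi_A}$ exhibits $\P A(1, 1)$ as the right lift $\pi_A \rf \pi_A$. It therefore suffices to show that restricting this right lift along $\breve p \colon X \to \P A$ and $\breve q \colon Y \to \P A$ produces a right lift of the corresponding restrictions, i.e.\ that
\[
\P A(\breve p, \breve q) \;=\; \P A(1,1)(\breve p, \breve q) \;=\; (\pi_A \rf \pi_A)(\breve p, \breve q)
\]
exhibits $\pi_A(1, \breve q) \rf \pi_A(1, \breve p) = q \rf p$.

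The key step is thus the general principle that, for loose-cells $m \colon W \lto A$ and $n \colon W' \lto A$ admitting a right lift $n \rf m$, and tight-cells $f \colon X \to W$ and $g \colon Y \to W'$, the restriction $(n \rf m)(f, g)$ exhibits $n(1, g) \rf m(1, f)$, the witnessing $2$-cell being the evident pasting of the cartesian $2$-cells with the $2$-cell witnessing $n \rf m$ (a routine consequence of the universal properties of restrictions and right lifts in a \ve{}; \cite{arkor2024formal}). Applying this with $m = n = \pi_A$, $f = \breve p$, $g = \breve q$ and using density of $\pi_A$ to identify $\pi_A \rf \pi_A$ with $\P A(1,1)$ yields precisely the claim, once one checks that the $2$-cell obtained this way agrees with the canonical $2$-cell in the statement — which is immediate on unwinding the constructions, since both are built by pasting the cartesian $2$-cells $\P A(\breve p, \breve q) \tto \P A(1,1)$ and $p = \pi_A(1, \breve p) \tto \pi_A$ with the action of $\P A(1,1)$ on $\pi_A$, and then transposing along the cartesian $2$-cell $q = \pi_A(1, \breve q) \tto \pi_A$.

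I expect the restriction-stability lemma for right lifts to be the main point requiring care: although elementary, its verification must cope with the usual asymmetry that a restriction $m(1, f)$ has a good universal property for $2$-cells \emph{into} it but not for $2$-cells with $m(1,f)$ in the source, so one is forced to use the full universal property of right lifts in a \ve{} (which already subsumes restrictions). If a self-contained argument is preferred, one can instead verify the right-lift universal property of $\P A(\breve p, \breve q)$ directly: given a composable string and a $2$-cell $\phi \colon p, s_1, \dots, s_n \tto q$, transpose $\phi$ along the cartesian $2$-cell $q \tto \pi_A$, then use density of $\pi_A$ (the universal property of $\P A(1,1) = \pi_A \rf \pi_A$) together with the cartesian $2$-cell defining $p = \pi_A(1, \breve p)$ to extract a unique factorisation through $\P A(\breve p, \breve q)$; this is the same computation, merely reorganised.
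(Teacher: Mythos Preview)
Your argument is correct and follows essentially the same route as the paper: density of $\pi_A$ gives $\P A(1,1)\cong\pi_A\rf\pi_A$, then stability of right lifts under restriction (which the paper cites as \cite[Lemma~3.6]{arkor2024formal}) yields $(\pi_A\rf\pi_A)(\breve p,\breve q)\cong \pi_A(1,\breve q)\rf\pi_A(1,\breve p)=q\rf p$. The paper's proof is a one-line chain of isomorphisms invoking exactly these three ingredients; your additional remarks about the canonical 2-cell agreeing and the alternative direct verification are sound but not needed.
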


\begin{proof}
    We have $\P A(\breve p, \breve q) \iso (\pi_A \rf \pi_A)(\breve p, \breve q) \iso \pi_A(1, \breve q) \rf \pi_A(1, \breve p) = q \rf p$ using density of $\pi_A$, \cite[Lemma~3.6]{arkor2024formal}, and the universal property of the presheaf object.
\end{proof}

Consequently, while \cref{presheaf-object} appears only to specify a one-dimensional universal property, density of the projection $\pi_A \colon \P A \lto A$ implies that presheaf objects further satisfy a two-dimensional universal property, analogous to the two-dimensional universal property of an algebra object (\cf{}~\cite[Definition~6.33]{arkor2024formal}).

\begin{proposition}
    \label{presheaf-two-dimensional-UP}
    Let $A$ be an object admitting a presheaf object. Every 2-cell $\phi$ of the following form
    \[\begin{tikzcd}
    	A & X & \cdots & {X'} \\
    	A &&& {X'}
    	\arrow["q", "\shortmid"{marking}, from=2-4, to=2-1]
    	\arrow[""{name=0, anchor=center, inner sep=0}, Rightarrow, no head, from=1-1, to=2-1]
    	\arrow["p"', "\shortmid"{marking}, from=1-2, to=1-1]
    	\arrow["{p_1}"', "\shortmid"{marking}, from=1-3, to=1-2]
    	\arrow["{p_n}"', "\shortmid"{marking}, from=1-4, to=1-3]
    	\arrow[""{name=1, anchor=center, inner sep=0}, Rightarrow, no head, from=1-4, to=2-4]
    	\arrow["\phi"{description}, draw=none, from=0, to=1]
    \end{tikzcd}\]
    factors uniquely through a 2-cell $\breve\phi$, as below.
    \[\begin{tikzcd}
    	A & {\P A} & X & \cdots & {X'} \\
    	A & {\P A} &&& {X'}
    	\arrow[""{name=0, anchor=center, inner sep=0}, Rightarrow, no head, from=1-2, to=2-2]
    	\arrow["{\P A(1, \breve p)}"', "\shortmid"{marking}, from=1-3, to=1-2]
    	\arrow["{\P A(1, \breve q)}", "\shortmid"{marking}, from=2-5, to=2-2]
    	\arrow["{p_1}"', "\shortmid"{marking}, from=1-4, to=1-3]
    	\arrow["{p_n}"', "\shortmid"{marking}, from=1-5, to=1-4]
    	\arrow[""{name=1, anchor=center, inner sep=0}, Rightarrow, no head, from=1-5, to=2-5]
    	\arrow["{\pi_A}"', "\shortmid"{marking}, from=1-2, to=1-1]
    	\arrow["{\pi_A}", "\shortmid"{marking}, from=2-2, to=2-1]
    	\arrow[""{name=2, anchor=center, inner sep=0}, Rightarrow, no head, from=1-1, to=2-1]
    	\arrow["\breve\phi"{description}, draw=none, from=0, to=1]
    	\arrow["{=}"{description}, draw=none, from=2, to=0]
    \end{tikzcd}\]
\end{proposition}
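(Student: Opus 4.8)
The plan is to exhibit the claimed correspondence $\phi \leftrightarrow \breve\phi$ as a composite of three elementary bijections, all driven by the fact that, by the one‑dimensional universal property of $\P A$ (\cref{presheaf-object}), both $p = \pi_A(1, \breve p)$ and $q = \pi_A(1, \breve q)$ are restrictions of the projection $\pi_A$, together with the defining right‑lift characterisation of density of $\pi_A$ (\cref{dense-loose-cell}). Since the correspondence will be a bijection, uniqueness and existence of the factorisation both come for free.

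First I would strip the codomain: as $q = \pi_A(1, \breve q)$, the cartesian $2$-cell defining this restriction puts $2$-cells $(p, p_1, \ldots, p_n) \tto q$ in bijection with $2$-cells $(p, p_1, \ldots, p_n) \tto \pi_A$ whose right‑hand tight boundary is $\breve q$ and whose left‑hand boundary is $1_A$. Second I would expand the domain occurrence of $p$: again $p = \pi_A(1, \breve p)$, and since $\P A(1, \breve p)$ is the companion of $\breve p \colon X \to \P A$, the canonical $2$-cell $(\pi_A, \P A(1, \breve p)) \tto \pi_A(1, \breve p) = p$ is opcartesian — this is the standard interplay between companions and restriction, cf.\ \S2 of \cite{arkor2024formal} — so the previous set of $2$-cells is in bijection with $2$-cells $(\pi_A, \P A(1, \breve p), p_1, \ldots, p_n) \tto \pi_A$ carrying the same tight boundaries. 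Third, and this is the only step that uses the hypothesis, density of $\pi_A$ says precisely that $1_{\pi_A}$ exhibits $\P A(1, 1)$ as the right lift $\pi_A \rf \pi_A$; by the universal property of right lifts in a \vdc{} (which quantifies over arbitrary chains), $2$-cells $(\pi_A, \P A(1, \breve p), p_1, \ldots, p_n) \tto \pi_A$ with left boundary $1_A$ correspond to $2$-cells $(\P A(1, \breve p), p_1, \ldots, p_n) \tto \P A(1, 1)$, and absorbing the right boundary $\breve q$ through the cartesian cell for $\P A(1, \breve q) = \P A(1,1)(1, \breve q)$ gives a bijection with globular $2$-cells $(\P A(1, \breve p), p_1, \ldots, p_n) \tto \P A(1, \breve q)$, i.e.\ with $2$-cells $\breve\phi$. (The degenerate case $n = 0$ is included with no change, and one may alternatively invoke \Cref{presheaf-hom-is-right-lift} in its special form $\P A(1, \breve q) \iso q \rf \pi_A$ in place of the bare density condition.)

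Composing the three bijections produces, for each $\phi$, a unique $\breve\phi$. What then remains is to unwind the constructions and confirm that this composite bijection is exactly the pasting displayed in the statement — namely, that $\phi$ is recovered from $\breve\phi$ by substituting $\breve\phi$ into the opcartesian cell $(\pi_A, \P A(1, \breve q)) \tto q$ while leaving $\pi_A$ fixed via $1_{\pi_A}$ (equivalently, in the representable case, by composing $\breve\phi$ with $\pi_A$). I expect the genuine work to lie entirely in this bookkeeping: tracking the variances and keeping straight which tight boundary sits on which side as one passes through the cartesian, opcartesian, and right‑lift universal properties, and recognising the companion‑versus‑restriction cell and the density cell in precisely the forms needed. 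None of the individual steps is deep; the hypothesis enters only through the right‑lift description of density, which is what turns the one‑dimensional universal property into the two‑dimensional one.
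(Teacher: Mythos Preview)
Your proposal is correct and is essentially the paper's argument unwound: the paper simply invokes \cref{presheaf-hom-is-right-lift} to get $q \rf p \iso \P A(\breve p, \breve q)$ and then appeals once to the universal property of the right lift, whereas you inline the proof of that lemma by passing through $\pi_A$ and the density condition directly. Your aside that one may instead invoke \cref{presheaf-hom-is-right-lift} is precisely how the paper proceeds, and collapses your three steps into one.
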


\begin{proof}
    Since $q \rf p \iso \P A(\breve p, \breve q)$ by \cref{presheaf-hom-is-right-lift}, the factorisation follows directly from the universal property of the right lift~\cite[Definition~3.1]{arkor2024formal}.
\end{proof}

Consequently, the universal property of a presheaf object may be rephrased in terms of a two-dimensional adjointness property.

\begin{lemma}
    \label{presheaf-via-adjunction}
    A dense loose-cell $\pi_A \colon \P A \lto A$ exhibits a presheaf object for $A$ if and only if, for each object $X \in \X$,
    \[\widebreve{\ph} \colon \X\lh{X, A} \rightleftarrows \X[X, \P A] \cocolon \pi_A(1, {-})\]
    exhibits an isomorphism of categories.
\end{lemma}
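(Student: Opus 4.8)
The plan is to split the biconditional into the two halves of the defining universal property of a presheaf object (\cref{presheaf-object})---bijectivity of $\pi_A(1,{-})$ on objects and on 2-cells---and to recognise these as, respectively, the one-dimensional universal property of \cref{presheaf-object} itself and the two-dimensional universal property of \cref{presheaf-two-dimensional-UP}.

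For the reverse implication ($\Leftarrow$) the argument is essentially immediate. If, for each $X \in \X$, the functor $\pi_A(1,{-}) \colon \X[X, \P A] \to \X\lh{X, A}$ is an isomorphism of categories, then in particular it is a bijection on objects: every loose-cell $p \colon X \lto A$ factors as $\pi_A(1, \breve p)$ for a unique tight-cell $\breve p \colon X \to \P A$. Together with the density of $\pi_A$, which is a standing hypothesis, this is exactly the content of \cref{presheaf-object}, so $\pi_A$ exhibits a presheaf object for $A$. (Only bijectivity on objects is used here, not the full strength of the isomorphism; its inverse is then what deserves the name $\widebreve{\ph}$.)

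For the forward implication ($\Rightarrow$), suppose $\pi_A$ exhibits a presheaf object. Functoriality of $\pi_A(1,{-})$ is routine, following from functoriality of restriction. Bijectivity on objects is once more literally the universal property of \cref{presheaf-object}, and the resulting assignment $p \mapsto \breve p$ is the object part of the candidate inverse $\widebreve{\ph}$. For bijectivity on hom-sets, fix $g, g' \colon X \to \P A$ and put $p \defeq \pi_A(1, g)$ and $q \defeq \pi_A(1, g')$, so that $g = \breve p$ and $g' = \breve q$. Specialising \cref{presheaf-two-dimensional-UP} to the empty chain ($n = 0$) shows that every globular 2-cell $p \tto q$ factors uniquely through a 2-cell $\breve\phi$ between the corresponding tight-cells, with the displayed factorisation equation asserting precisely that $\pi_A(1,{-})$ carries $\breve\phi$ back to the given 2-cell; existence yields surjectivity and uniqueness yields injectivity. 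Setting $\widebreve{\ph}$ on 2-cells to be $\phi \mapsto \breve\phi$, the uniqueness clauses of \cref{presheaf-object,presheaf-two-dimensional-UP} force $\widebreve{\ph}$ to preserve identities and composites and to be a two-sided inverse of $\pi_A(1,{-})$, so the latter is an isomorphism of categories.

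The one delicate point---but a purely bookkeeping one---is matching the factorisation supplied by \cref{presheaf-two-dimensional-UP} with the functor $\pi_A(1,{-})$: one must invoke the standard identification of a 2-cell of $\X[X, \P A]$ between $\breve p$ and $\breve q$ with the datum $\breve\phi$ appearing there (equivalently, of 2-cells between tight-cells with 2-cells between their conjoints), and check that the family of hom-bijections assembles together with the object bijection into a genuine functor. All of this follows formally from the uniqueness parts of the two universal properties, so I expect no genuine obstacle; density of $\pi_A$ enters only through \cref{presheaf-object} and, via \cref{presheaf-hom-is-right-lift}, through \cref{presheaf-two-dimensional-UP}.
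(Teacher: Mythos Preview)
Your proposal is correct and follows essentially the same approach as the paper: the forward direction uses the one-dimensional universal property for bijectivity on objects and \cref{presheaf-two-dimensional-UP} (specialised to $n=0$) for bijectivity on morphisms, while the reverse direction extracts the one-dimensional property from bijectivity on objects. The paper's proof is simply terser, omitting the bookkeeping remarks you include about functoriality and the identification of 2-cells between tight-cells with 2-cells between their conjoints.
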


\begin{proof}
    Suppose that $\pi_A \colon \P A \lto A$ exhibits a presheaf object. That the assignment forms a bijection on objects is immediate from the universal property; that it forms a bijection on morphisms is immediate from \cref{presheaf-two-dimensional-UP}.

    Conversely, suppose that $\pi_A \colon \P A \lto A$ exhibits an isomorphism of categories. The universal property of \cref{presheaf-object} follows trivially.
\end{proof}

The following lemma shows that presheaves interact nicely with restrictions.

\begin{lemma}
    \label{presheaf-restriction}
    Consider a diagram $A \xto f B \xlfrom p C \xfrom g D$. Suppose that $A$ and $B$ admit presheaf objects. Then $\widebreve{p(f, g)} = (g \d \breve p \d f^*) \colon D \to \P A$.
\end{lemma}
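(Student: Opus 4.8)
The plan is to invoke the uniqueness clause in \cref{presheaf-object}: the tight-cell $\widebreve{p(f,g)} \colon D \to \P A$ is by definition the unique one satisfying $p(f,g) = \pi_A(1, \widebreve{p(f,g)})$, so it suffices to check that the composite $g \d \breve p \d f^*$ also satisfies this equation, i.e.\ that $\pi_A(1, g \d \breve p \d f^*) = p(f, g)$. Everything then reduces to a short computation using strict functoriality of restriction (\cref{strict-ve}) together with the defining equations $\pi_B(1, \breve p) = p$ and $\pi_A(1, f^*) = \pi_B(f, 1)$ furnished by the universal property of presheaf objects.

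Concretely, I would argue as follows. Splitting the restriction along the composite tight-cell $g \d \breve p \d f^*$ via strict functoriality gives $\pi_A(1, g \d \breve p \d f^*) = \pi_A(1, f^*)(1, \breve p)(1, g)$. By the definition of $f^*$ as the restriction along $f$, the first factor $\pi_A(1, f^*)$ equals $\pi_B(f, 1)$, so (again by functoriality) the expression becomes $\pi_B(f, 1)(1, \breve p)(1, g) = \pi_B(f, \breve p)(1, g)$. Now $\pi_B(f, \breve p) = \pi_B(1, \breve p)(f, 1) = p(f, 1)$, using functoriality once more and the defining equation $\pi_B(1, \breve p) = p$ of $\breve p$; hence $\pi_B(f, \breve p)(1, g) = p(f, 1)(1, g) = p(f, g)$. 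This is exactly the equation we needed, and the lemma follows.

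I do not expect any genuine obstacle here: the argument is pure bookkeeping, entirely at the level of the one-dimensional universal property, and makes no use of density of $\pi_A$ or of the two-dimensional universal property. The only point demanding care is the orientation convention for restriction — the target-side tight-cell occupies the first slot and the source-side tight-cell the second — together with the direction of composition when applying $p(f, g)(f', g') = p(f f', g g')$; keeping these straight is all that is needed. Because we are working in a strict \ve{}, all the intermediate steps are genuine equalities rather than coherence isomorphisms; in a non-strict setting the statement would instead hold up to the canonical isomorphisms mediating the pseudofunctoriality of restriction.
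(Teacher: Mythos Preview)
Your proof is correct and is essentially the same as the paper's: both verify $\pi_A(1, g \d \breve p \d f^*) = p(f,g)$ by repeatedly applying strict functoriality of restriction together with the defining equations $\pi_A(1, f^*) = \pi_B(f,1)$ and $\pi_B(1, \breve p) = p$, then invoke uniqueness in the universal property of $\P A$. The only cosmetic difference is that the paper keeps $\breve p g$ grouped throughout and passes through $\pi_B(f, \breve p g) = \pi_B(1, \breve p)(f, g)$ in one step, whereas you peel off $g$ earlier; the computations are otherwise identical.
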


\begin{proof}
    We have
    \[
    C(1, f^* \breve p g) = \P A(1, f^*)(1, \breve p g)
    = \pi_B(f, 1)(1, \breve p g)
    = \pi_B(f, \breve p g)
    = \pi_B(1, \breve p)(f, g)
    = p(f, g)
    \]
    hence $f^* \breve p g = \widebreve{p(f, g)}$ using the universal property of the presheaf object.
\end{proof}

We defer an in-depth exploration of the consequences of admitting a presheaf object. However, it is instructive to observe that it is possible to deduce from \cref{presheaf-object} a formal statement of the Yoneda lemma, as follows.

\begin{lemma}
    \label{Yoneda}
    For every object $A$ admitting a presheaf object, there is an isomorphism ${\P A(\yo_A, 1) \iso \pi_A}$.
\end{lemma}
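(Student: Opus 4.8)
The plan is to reduce the statement to a single application of \cref{presheaf-hom-is-right-lift}, after which only the triviality of right-lifting through a loose-identity remains.

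First I would unwind the two tight-cells appearing on the left-hand side. By definition $\yo_A = \widebreve{A(1, 1)}$ classifies the loose-identity, so $A(1, 1) = \pi_A(1, \yo_A)$. Likewise, by strictness of restriction we have $\pi_A = \pi_A(1, 1_{\P A})$, whence uniqueness of classifiers in \cref{presheaf-object} forces $\widebreve{\pi_A} = 1_{\P A}$. Thus the loose-cells classified by the tight-cells $\yo_A \colon A \to \P A$ and $1_{\P A} \colon \P A \to \P A$ are, respectively, $A(1, 1) \colon A \lto A$ and $\pi_A \colon \P A \lto A$.

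Next I would apply \cref{presheaf-hom-is-right-lift} with $p \defeq A(1, 1)$ and $q \defeq \pi_A$, whose classifiers are $\breve p = \yo_A$ and $\breve q = 1_{\P A}$ by the previous paragraph. This produces a canonical isomorphism
\[
  \P A(\yo_A, 1) \;=\; \P A(\breve p, \breve q) \;\iso\; q \rf p \;=\; \pi_A \rf A(1, 1).
\]
Finally, the right lift of any loose-cell with codomain $A$ through the loose-identity $A(1, 1)$ is canonically isomorphic to that loose-cell: a 2-cell $m, A(1, 1) \tto \pi_A$ is the same as a 2-cell $m \tto \pi_A$ by the universal property of $A(1, 1)$, so $\pi_A \rf A(1, 1)$ and $\pi_A$ satisfy the same universal property. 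Composing the two isomorphisms yields $\P A(\yo_A, 1) \iso \pi_A$.

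I expect the only delicate point to be bookkeeping: checking that the loose-identity being lifted through is $A(1, 1)$ rather than $\P A(1, 1)$, which is precisely what makes the last step a triviality; this is ensured by the typing in \cref{presheaf-hom-is-right-lift}, since the loose-cell classified by $\yo_A$ has both legs at $A$. No genuine computation is involved — the content has been front-loaded into \cref{presheaf-hom-is-right-lift} and the unit law for right lifts.
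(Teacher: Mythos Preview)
Your proposal is correct and follows exactly the same route as the paper's proof: apply \cref{presheaf-hom-is-right-lift} with $p = A(1,1)$ and $q = \pi_A$ (whose classifiers are $\yo_A$ and $1_{\P A}$), obtaining $\P A(\yo_A, 1) \iso \pi_A \rf A(1,1) \iso \pi_A$. You have simply spelled out the bookkeeping that the paper leaves implicit.
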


\begin{proof}
    By \cref{presheaf-hom-is-right-lift}, $\P A(\yo_A, 1) \iso \pi_A \rf A(1, 1) \iso \pi_A$.
\end{proof}

In particular, for every tight-cell $\breve p \colon \cdot \to \P A$, which we may view as a presheaf on $A$, we have $\P A(\yo_A, \breve p) \iso \pi_A(1, \breve p) = p$. It will also be useful, for \cref{loose-monads-and-yo-relative-monads}, to observe that nerves are right adjoints relative to the presheaf embedding.

\begin{lemma}
    \label{nerve-is-relative-adjoint}
    Let $\jAE$ be a tight-cell for which $A$ admits a presheaf object. Then $j \radj{\yo_A} n_j$.
\end{lemma}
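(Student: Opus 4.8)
The plan is to compute the loose-cell $\P A(\yo_A, n_j)$ and to recognise the resulting isomorphism as exactly the datum of a $\yo_A$-relative adjunction. Recall that, for tight-cells $\ell \colon A \to C$ and $r \colon C \to E'$ with root $j' \colon A \to E'$, a $j'$-relative adjunction $\ell \radj{j'} r$ is witnessed by an isomorphism of loose-cells $C(\ell, 1) \iso E'(j', r)$ \cite{arkor2024formal}. Instantiating with $\ell \defeq j \colon A \to E$, $r \defeq n_j \colon E \to \P A$, and $j' \defeq \yo_A \colon A \to \P A$, the claim $j \radj{\yo_A} n_j$ amounts to producing a (canonical) isomorphism $E(j, 1) \iso \P A(\yo_A, n_j)$.

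First I would invoke \cref{presheaf-hom-is-right-lift} with the loose-cells $A(1, 1) \colon A \lto A$ and $E(j, 1) \colon E \lto A$. By \cref{presheaf-object}, their mediating tight-cells are $\widebreve{A(1, 1)} = \yo_A$ and $\widebreve{E(j, 1)} = n_j$ respectively, so the lemma provides an isomorphism $\P A(\yo_A, n_j) \iso E(j, 1) \rf A(1, 1)$, together with a realisation of the right-hand side as a genuine right lift.

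Second I would use the fact that right lifting through a loose-identity is trivial, $E(j, 1) \rf A(1, 1) \iso E(j, 1)$ — the same step used in the proof of \cref{Yoneda} for $\pi_A \rf A(1, 1) \iso \pi_A$. Composing the two isomorphisms yields $\P A(\yo_A, n_j) \iso E(j, 1)$, which is the sought isomorphism of loose-cells.

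The argument is short, and there is no genuine obstacle beyond bookkeeping: the only point requiring attention is to check that the composite isomorphism is the canonical comparison 2-cell, so that it genuinely assembles into a relative adjunction rather than merely an abstract isomorphism of loose-cells. This follows by tracing through the universal properties of the presheaf object and of the right lift, both of which are strict under our standing assumptions, so that the two-dimensional coherence — which, via \cref{presheaf-two-dimensional-UP}, is automatic — comes for free. As a sanity check, taking $j = 1_A$ (so that $n_j = \yo_A$) specialises the statement to $1_A \radj{\yo_A} \yo_A$, which holds trivially.
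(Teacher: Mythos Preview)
Your argument is correct and essentially the same as the paper's: the paper writes $E(j,1) = \pi_A(1,n_j) \iso \P A(\yo_A, n_j)$, invoking \cref{Yoneda} for the isomorphism, while you invoke \cref{presheaf-hom-is-right-lift} directly to obtain $\P A(\yo_A, n_j) \iso E(j,1) \rf A(1,1) \iso E(j,1)$. Since \cref{Yoneda} is itself an immediate corollary of \cref{presheaf-hom-is-right-lift}, the two routes differ only in packaging; your additional remarks about canonicity are more cautious than necessary, as the isomorphism $C(\ell,1) \iso E'(j',r)$ is precisely the datum of a relative adjunction in this setting.
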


\begin{proof}
    Using \cref{Yoneda}, we have $E(j, 1) = \pi_A(1, n_j) \iso \P A(\yo_A, n_j)$.
\end{proof}

\subsection{Pullbacks in \ve{}s}

Next, inevitably, we require a notion of pullback in a \ve{}.
The appropriate notion of pullback in a 2-category is a pullback in the underlying 1-category, together with a universal property on 2-cells. A pullback in a \ve{} is similar, except that we require the universal property on 2-cells to be suitably multiary.

\begin{definition}
    \label{pullback-of-2-cells}
    A \emph{pullback} of a cospan of 2-cells $\phi \colon p \tto r \tfrom q \cocolon \psi$ in a \vdc{} is a commuting square of 2-cells
    \[\begin{tikzcd}
    	{\phi \times_r \psi} & q \\
    	p & r
    	\arrow["{\pi_1}"', Rightarrow, from=1-1, to=2-1]
    	\arrow["{\pi_2}", Rightarrow, from=1-1, to=1-2]
    	\arrow["\psi", Rightarrow, from=1-2, to=2-2]
    	\arrow["\phi"', Rightarrow, from=2-1, to=2-2]
    \end{tikzcd}\]
      such that, for each span of 2-cells
      \[
        p \xtfrom{\chi_1} s_1, \dots, s_n \xtto{\chi_2} q
        \qquad(n \geq 0)
      \]
      such that $\chi_1 \d \phi = \chi_2 \d \psi$, there is a unique
      2-cell $\tp{\chi_1, \chi_2} \colon s_1, \dots, s_n \tto \phi \times_r \psi$ rendering the following diagram commutative.
    \[\begin{tikzcd}
    	{s_1, \dots, s_n} \\
    	& {\phi \times_r \psi} & q \\
    	& p & r
    	\arrow["{\pi_1}"', Rightarrow, from=2-2, to=3-2]
    	\arrow["{\pi_2}", Rightarrow, from=2-2, to=2-3]
    	\arrow["\psi", Rightarrow, from=2-3, to=3-3]
    	\arrow["\phi"', Rightarrow, from=3-2, to=3-3]
    	\arrow["{\chi_1}"', curve={height=12pt}, Rightarrow, from=1-1, to=3-2]
    	\arrow["{\chi_2}", curve={height=-12pt}, Rightarrow, from=1-1, to=2-3]
    	\arrow["{\tp{\chi_1, \chi_2}}"{description}, Rightarrow, dashed, from=1-1, to=2-2]
    \end{tikzcd}\]
\end{definition}

Concisely, a pullback of 2-cells is a pullback in the corresponding hom-multicategory of $\X$. Pullbacks of 2-cells interact nicely with restrictions.

\begin{lemma}
  \label{restriction-of-pullback}
  In a \ve{} $\X$, the restriction of any pullback of 2-cells is also a pullback.
\[\begin{tikzcd}[column sep=large]
	{(\phi \times_r \psi)(x, y)} & {q(x, y)} \\
	{p(x, y)} & {r(x, y)}
	\arrow["{\pi_1(x, y)}"', Rightarrow, from=1-1, to=2-1]
	\arrow["{\pi_2(x, y)}", Rightarrow, from=1-1, to=1-2]
	\arrow["{\psi(x, y)}", Rightarrow, from=1-2, to=2-2]
	\arrow["{\phi(x, y)}"', Rightarrow, from=2-1, to=2-2]
\end{tikzcd}\]
\end{lemma}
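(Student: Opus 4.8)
The plan is to reduce the statement to the defining universal property of cartesian 2-cells. Write $c_p \colon p(x, y) \tto p$, $c_q \colon q(x, y) \tto q$, $c_r \colon r(x, y) \tto r$, and $c \colon (\phi \times_r \psi)(x, y) \tto \phi \times_r \psi$ for the cartesian 2-cells exhibiting the four restrictions appearing in the square; these exist because $\X$ is a \ve{}. Recall that the restricted 2-cells are characterised by the identities
\begin{gather*}
    c_p \d \phi = \phi(x, y) \d c_r, \qquad c_q \d \psi = \psi(x, y) \d c_r, \\
    c \d \pi_1 = \pi_1(x, y) \d c_p, \qquad c \d \pi_2 = \pi_2(x, y) \d c_q,
\end{gather*}
and that composing with a cartesian 2-cell is injective on 2-cells whose outer tight-cell legs have the appropriate form (this is the uniqueness part of the cartesian universal property). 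As a warm-up one checks that the restricted square commutes: $\pi_1(x, y) \d \phi(x, y) \d c_r = c \d \pi_1 \d \phi = c \d \pi_2 \d \psi = \pi_2(x, y) \d \psi(x, y) \d c_r$, and $c_r$ may be cancelled.

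For the universal property, I would start from a span $\chi_1 \colon s_1, \dots, s_n \tto p(x, y)$, $\chi_2 \colon s_1, \dots, s_n \tto q(x, y)$ with $\chi_1 \d \phi(x, y) = \chi_2 \d \psi(x, y)$, and transport it along the cartesian 2-cells by setting $\overline{\chi_1} \defeq \chi_1 \d c_p$ and $\overline{\chi_2} \defeq \chi_2 \d c_q$. The displayed identities give $\overline{\chi_1} \d \phi = \chi_1 \d \phi(x, y) \d c_r = \chi_2 \d \psi(x, y) \d c_r = \overline{\chi_2} \d \psi$, so the universal property of the pullback $\phi \times_r \psi$ supplies a unique 2-cell $\tp{\overline{\chi_1}, \overline{\chi_2}} \colon s_1, \dots, s_n \tto \phi \times_r \psi$ with $\tp{\overline{\chi_1}, \overline{\chi_2}} \d \pi_i = \overline{\chi_i}$. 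Since all four loose-cells of the square were restricted along the \emph{same} pair $x, y$, and $\pi_1$ is globular, the outer legs of $\tp{\overline{\chi_1}, \overline{\chi_2}}$ coincide with those of $\overline{\chi_1}$, namely the legs of $\chi_1$ post-composed with $x$ and $y$; hence $\tp{\overline{\chi_1}, \overline{\chi_2}}$ factors uniquely through $c$, giving a 2-cell $\tp{\chi_1, \chi_2} \colon s_1, \dots, s_n \tto (\phi \times_r \psi)(x, y)$ with $\tp{\chi_1, \chi_2} \d c = \tp{\overline{\chi_1}, \overline{\chi_2}}$.

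It remains to check that $\tp{\chi_1, \chi_2}$ mediates the restricted square and is unique with this property. For the former, $\tp{\chi_1, \chi_2} \d \pi_1(x, y) \d c_p = \tp{\chi_1, \chi_2} \d c \d \pi_1 = \tp{\overline{\chi_1}, \overline{\chi_2}} \d \pi_1 = \overline{\chi_1} = \chi_1 \d c_p$, and cancelling $c_p$ gives $\tp{\chi_1, \chi_2} \d \pi_1(x, y) = \chi_1$; the analogous computation with $c_q$ gives $\tp{\chi_1, \chi_2} \d \pi_2(x, y) = \chi_2$. For uniqueness, given any $\theta \colon s_1, \dots, s_n \tto (\phi \times_r \psi)(x, y)$ with $\theta \d \pi_i(x, y) = \chi_i$, composing with $c$ and then $\pi_i$ yields $(\theta \d c) \d \pi_i = \theta \d \pi_i(x, y) \d c_p = \chi_i \d c_p = \overline{\chi_i}$ (resp.\ with $c_q$ for $i = 2$), so $\theta \d c = \tp{\overline{\chi_1}, \overline{\chi_2}} = \tp{\chi_1, \chi_2} \d c$ by uniqueness in the original pullback, and $\theta = \tp{\chi_1, \chi_2}$ by cancelling $c$.

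The one step needing genuine care is the boundary bookkeeping in the second paragraph: one must confirm that the outer tight-cell legs of $\tp{\overline{\chi_1}, \overline{\chi_2}}$ really do factor through $x$ and $y$, since this is exactly what licenses the factorisation through the cartesian 2-cell $c$ of the restriction along $x, y$. Everything else is a routine diagram chase through the four defining identities above together with cancellation of cartesian 2-cells, and no property of $\X$ is used beyond the existence of the restrictions and of the pullback $\phi \times_r \psi$ itself.
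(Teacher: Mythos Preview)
Your argument is correct. It differs from the paper's proof in a useful way: the paper exploits the full \ve{} structure by invoking the natural bijection between 2-cells $s_1,\dots,s_n \tto t(x,y)$ and 2-cells $X(1,x),\, s_1,\dots,s_n,\, Y(y,1) \tto t$ (which uses loose-identities to bend the tight legs $x,y$ into companion and conjoint loose-cells), reducing the lemma to a one-line appeal to the original pullback with the extended source chain. You instead work directly with the cartesian 2-cells $c_p,c_q,c_r,c$, transporting data back and forth and cancelling. Your route is longer but more elementary: as you observe at the end, it uses only the existence of restrictions, so it establishes the lemma in any \vdc{} with restrictions, not merely in a \ve{}. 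The paper's route is slicker but genuinely needs loose-identities. The ``boundary bookkeeping'' step you flag is indeed routine once stated: since $\pi_1$ is globular, the tight legs of $\tp{\overline{\chi_1},\overline{\chi_2}}$ equal those of $\overline{\chi_1}$, which are by construction the legs of $\chi_1$ post-composed with $x$ and $y$, so the cartesian factorisation through $c$ (with the original legs of $\chi_1$) is licensed.
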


\begin{proof}
  Since $\X$ admits loose-identities, 2-cells of the form
  $s_1, \dots, s_n \tto t(x, y)$ are in bijection with 2-cells of the form
  $X(1, x), s_1, \dots, s_n, Y(y, 1) \tto t$, natural in $t$.
  The result follows immediately by taking $t = (\phi \times_r \psi)$ and using the universal property.
\end{proof}

\begin{definition}
    \label{pullback-of-tight-cells}
    A \emph{pullback} of a cospan $f \colon A \to C \from B \cocolon g$ of tight-cells in a \ve{} $\X$ is a pullback in the underlying
    1-category of objects and tight-cells of $\X$, as on the left below, such that the induced square on the right below is a pullback of 2-cells.
    \[\begin{tikzcd}
    	{f \times_C g} & B \\
    	A & C
    	\arrow["{\pi_2}", from=1-1, to=1-2]
    	\arrow["{\pi_1}"', from=1-1, to=2-1]
    	\arrow[""{name=0, anchor=center, inner sep=0}, "f"', from=2-1, to=2-2]
    	\arrow["g", from=1-2, to=2-2]
    	\arrow["\lrcorner"{anchor=center, pos=0.125}, draw=none, from=1-1, to=0]
    \end{tikzcd}
    \hspace{6em}
    \begin{tikzcd}
        {(f \times_C g)(1, 1)} & {B(\pi_2, \pi_2)} \\
        {A(\pi_1, \pi_1)} & {C(f\pi_1, g\pi_2)}
        \arrow["{\pc{\pi_1}}"', Rightarrow, from=1-1, to=2-1]
        \arrow["{\pc{\pi_2}}", Rightarrow, from=1-1, to=1-2]
        \arrow["{\pc{g}(\pi_2, \pi_2)}", Rightarrow, from=1-2, to=2-2]
        \arrow["{\pc{f}(\pi_1, \pi_1)}"', Rightarrow, from=2-1, to=2-2]
    \end{tikzcd}\]
\end{definition}

In particular, a pullback in $\X$ is a 2-pullback in the tight 2-category $\tX$, but in general the pullback has a stronger universal property.

\subsection{The nerve theorem}

In the presence of presheaf objects, we can give a construction of semanticisers in terms of pullbacks.

\begin{lemma}
    \label{semanticiser-via-pullback}
    Let $n \colon E \lto A$ be a loose-cell and let $k \colon A \to K$ be a tight-cell for which $A$ and $K$ admit presheaf objects (\cref{presheaf-object}). Then the diagram on the left exhibits a semanticiser (\cref{semanticiser}) if and only if the diagram on the right exhibits a pullback (\cref{pullback-of-tight-cells}).
    \[
    \begin{tikzcd}
    	{n \times_A k} & K \\
    	E & A
    	\arrow["n"', "\shortmid"{marking}, from=2-1, to=2-2]
    	\arrow["k"', from=2-2, to=1-2]
    	\arrow["{\pi_1}"', from=1-1, to=2-1]
    	\arrow["{\pi_2}", "\shortmid"{marking}, from=1-1, to=1-2]
    \end{tikzcd}
    \hspace{4em}
    \begin{tikzcd}
    	{n \times_A k} & {\P K} \\
    	E & {\P A}
    	\arrow["{\pi_1}"', from=1-1, to=2-1]
    	\arrow["{\breve n}"', from=2-1, to=2-2]
    	\arrow["{k^*}", from=1-2, to=2-2]
    	\arrow["{\widebreve{\pi_2}}", from=1-1, to=1-2]
\end{tikzcd}
    \]
\end{lemma}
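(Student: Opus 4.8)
The plan is to transport the universal property of the semanticiser across the classification of loose-cells by presheaf objects, one dimension at a time, the only tools being \cref{presheaf-via-adjunction,presheaf-restriction,presheaf-hom-is-right-lift,restriction-of-pullback}. First I would note that the right-hand square commutes: by \cref{presheaf-restriction} we have $\widebreve{\pi_2}\d k^* = \widebreve{\pi_2(k,1)}$ and $\pi_1\d\breve n = \widebreve{n(1,\pi_1)}$, and these agree by the standing hypothesis $\pi_2(k,1) = n(1,\pi_1)$ on the left-hand square. For the one-dimensional universal properties: a cone over $E\xto{\breve n}\P A\xfrom{k^*}\P K$ with apex $D$ is a pair $(e\colon D\to E,\ g\colon D\to\P K)$ with $e\d\breve n = g\d k^*$; by \cref{presheaf-via-adjunction}, $g$ corresponds to a loose-cell $p\colon D\lto K$ with $\widebreve p = g$, and by \cref{presheaf-restriction} the condition $e\d\breve n = g\d k^*$ becomes $n(1,e) = p(k,1)$, which is precisely the test datum of \cref{semanticiser-1}; likewise the two factorisation equations $\tp{e,p}\d\pi_1 = e$ and $\pi_2(1,\tp{e,p}) = p$ become $\tp{e,p}\d\pi_1 = e$ and $\tp{e,p}\d\widebreve{\pi_2} = \widebreve p = g$. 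Hence the semanticiser's one-dimensional universal property holds if and only if $n\times_A k$, equipped with $\pi_1$ and $\widebreve{\pi_2}$, is a pullback in the underlying $1$-category of $\X$.

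For the two-dimensional part I would use \cref{restriction-of-pullback} to reduce the requirement that the induced square of $2$-cells in \cref{pullback-of-tight-cells} be a pullback of $2$-cells to the requirement that each of its restrictions along a pair of tight-cells $x,y\colon\cdot\to n\times_A k$ be a pullback of $2$-cells (the converse reduction being trivial, restricting along the identity). Since every such $x$ equals $\tp{\pi_1 x,\ \pi_2(1,x)}$ by uniqueness in the one-dimensional universal property, it suffices to treat $x = \tp{e,p}$, $y = \tp{e',p'}$, in which case the restricted square has corners $(n\times_A k)(x,y)$, $E(e,e')$, $\P K(\widebreve p,\widebreve{p'})$ and $\P A(\breve n e,k^*\widebreve{p'})$. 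Now by \cref{presheaf-hom-is-right-lift} we have $\P K(\widebreve p,\widebreve{p'})\iso p'\rf p$ and $\P A(\breve n e,\breve n e')\iso n(1,e')\rf n(1,e)$, so that, using the universal property of right lifts together with the conjoint-transposition identifying $2$-cells $E(1,e),s_1,\dots,s_n\tto E(1,e')$ with $2$-cells $s_1,\dots,s_n\tto E(e,e')$ (and the analogous identity over $n\times_A k$), a span into the restricted square of $2$-cells corresponds to exactly a pair $(\chi_1,\chi_2)$ as in \cref{semanticiser-2}: the agreement condition in $\P A(\breve n e,k^*\widebreve{p'})$ corresponds to inducing the same $2$-cell $n(1,e),s_1,\dots,s_n\tto n(1,e')$, and the induced factorisation through $(n\times_A k)(x,y)$ corresponds to $\tp{\chi_1,\chi_2}$. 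This yields the equivalence of the two-dimensional universal properties, and hence the lemma. (For the direction from the semanticiser to the pullback one can also obtain the multiary universal property of \cref{pullback-of-2-cells} directly, taking $e = e' = \pi_1$ and $p = p' = \pi_2$ so that $\tp{e,p} = 1$ and the same translations apply verbatim.)

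The main obstacle is the bookkeeping in this last step: verifying that the chain of bijections — classification of loose-cells by presheaf objects, the right-lift description of presheaf-homs, and conjoint-transposition of graded morphisms — is compatible with the various operations of composing with structural $2$-cells that occur in \cref{semanticiser,pullback-of-2-cells}. Concretely, one must check that the cartesian cells defining the restrictions $E(\pi_1,\pi_1)$ and $\P K(\widebreve{\pi_2},\widebreve{\pi_2})$ are carried to the companion/conjoint units $\pc{\pi_1}$, $\pc{\widebreve{\pi_2}}$, $\pc{\breve n}$, $\pc{k^*}$ appearing in \cref{pullback-of-tight-cells}, and that ``inducing the same $2$-cell into $n$'' really matches ``agreeing in $\P A(\breve n\pi_1,k^*\widebreve{\pi_2})$''. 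None of this is hard, but it is where the actual content lies; the logical skeleton is entirely forced by the cited lemmas.
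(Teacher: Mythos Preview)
Your proposal is correct and follows essentially the same approach as the paper: both arguments translate the one-dimensional universal property via \cref{presheaf-restriction}, and handle the two-dimensional property by restricting the pullback square along $\tp{e,p}$ and $\tp{e',p'}$ using \cref{restriction-of-pullback}, then transporting the resulting cospan across the presheaf-object classification. The paper phrases the last transport as ``transposing $e$ and $\breve p$, and using the universal property of $\P K$'' where you invoke \cref{presheaf-hom-is-right-lift} and right lifts explicitly, but these are the same manoeuvre (the former is proved via the latter in \cref{presheaf-two-dimensional-UP}).
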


\begin{proof}
    This is a simple exercise in the use of \cref{presheaf-restriction}. To show the two diagrams satisfy the same one-dimensional universal property, we need (1) their cones to be equivalent; (2) their mediating morphisms to be equivalent.

    For (1), observe that a pair $(e, p)$ of a tight-cell $e \colon \cdot \to E$ and $p \colon \cdot \lto K$ satisfying $n(1, e) = p(k, 1)$ is equivalent to a pair $(e, \breve p)$ of tight-cells $e \colon \cdot \to E$ and $\breve p \colon \cdot \to \P K$ satisfying $e \d \breve n = \breve p \d k^*$ since we have $\widebreve{n(1, e)} = e \d \breve n$ and $\widebreve{p(k, 1)} = \breve p \d k^*$.

    For (2), observe that a tight-cell $\tp{e, p} \colon \cdot \to n \times_A k$ satisfying $\tp{e, p} \d \pi_1 = e$ and $\pi_2(1, \tp{e, p}) = p$ is equivalent to a tight-cell $\tp{e, \breve p} \colon \cdot \to n \times_A k$ satisfying $\tp{e, p} \d \pi_1 = e$ and $\tp{e, \breve p} \d \widebreve{\pi_2} = \breve p$ since, by \cref{presheaf-restriction}, we have $\widebreve{\pi_2(1, \tp{e, p})} = \tp{e, p} \d \widebreve{\pi_2}$.

    To show the two diagrams satisfy the same two-dimensional universal property involves similar reasoning involving (1) equivalence of cones; and (2) equivalence of mediating 2-cells. For (1), observe that the universal property of the semanticiser involves a pair of 2-cells of the following form for each pair $e, e' \colon \cdot \to E$ of tight-cells and each pair $p, p' \colon \cdot \lto K$ of loose-cells,
    \begin{align*}
        \chi_1 & \colon E(1, e), s_1, \ldots, s_n \tto E(1, e') \\
        \chi_2 & \colon p, s_1, \ldots, s_n \tto p'
    \end{align*}
    whereas the universal property of the pullback involves a pair of 2-cells of the following form.
    \begin{align*}
        \chi_1' \colon s_1, \ldots, s_n \tto E(\pi_1, \pi_1) \\
        \chi_2' \colon s_1, \ldots, s_n \tto \P K(\pi_2, \pi_2)
    \end{align*}
    However, by \cref{restriction-of-pullback}, restricting the pullback of the cospan $E(\pi_1, \pi_1) \tto \P A(\breve n \pi_1, k^* \pi_2) \tfrom \P K(\pi_2, \pi_2)$ along $\tp{e, \breve p}$ and $\tp{e', \widebreve{p'}}$ exhibits a pullback of the cospan $E(e, e') \tto \P A(\breve n e, k^* \widebreve{p'}) \tfrom \P K(\breve p, \widebreve{p'})$, which satisfies a universal property involving a pair of 2-cells of the following form.
    \begin{align*}
        \chi_1'' \colon s_1, \ldots, s_n \tto E(e, e') \\
        \chi_2'' \colon s_1, \ldots, s_n \tto \P K(\breve p, \widebreve{p'})
    \end{align*}
    By transposing $e$ and $\breve p$, and using the universal property of $\P K$, the 2-cells $\chi_1''$ and $\chi_2''$ are equivalent to the 2-cells $\chi_1$ and $\chi_2$. Conversely, taking $\tp{e, \breve p}$ and $\tp{e', \widebreve{p'}}$ to be identities in $\chi_1''$ and $\chi_2''$ recovers $\chi_1'$ and $\chi_2'$. (2) then follows by completely analogous reasoning.
\end{proof}

\begin{theorem}[Nerve theorem]
    \label{pullback-theorem}
    Let $\X$ be an exact \ve{}, let $\jAE$ be a dense tight-cell, and let $T$ be a $j$-monad. Suppose that $A$ and $\Opalg(T)$ admit presheaf objects. Then $u \colon D \to E$ exhibits an algebra object for $T$ if and only if there is a loose-cell $\breve p \colon D \to \P(\Opalg(T))$ exhibiting the following diagram as a pullback.
    \[\begin{tikzcd}
    	D & {\P(\Opalg(T))} \\
    	E & {\P A}
    	\arrow["{\breve p}", hook, from=1-1, to=1-2]
    	\arrow["u"', from=1-1, to=2-1]
    	\arrow["{{k_T}^*}", from=1-2, to=2-2]
    	\arrow["{n_j}"', hook, from=2-1, to=2-2]
    \end{tikzcd}\]
    In this case, $\breve p \iso n_{i_T}$, exhibiting the comparison tight-cell $i_T \colon \Opalg(T) \to \Alg(T)$ as dense and \ff{}.
\end{theorem}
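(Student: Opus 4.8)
The plan is to deduce the theorem by composing two results already established: the semanticiser characterisation of algebra objects (\cref{semanticiser-theorem}) and the construction of semanticisers from presheaf objects and pullbacks (\cref{semanticiser-via-pullback}). Since $\X$ is exact and $j$ is dense, \cref{semanticiser-theorem} says that $u \colon D \to E$ exhibits the algebra object for $T$ if and only if there is a loose-cell $\pi \colon D \lto \Opalg(T)$, necessarily satisfying $\pi(k_T, 1) = E(j, u)$, exhibiting the square with legs $E(j, 1) \colon E \lto A$ and $k_T \colon A \to \Opalg(T)$ as a semanticiser. I would then invoke \cref{semanticiser-via-pullback} with $n \defeq E(j, 1)$ and $k \defeq k_T$, so that $K = \Opalg(T)$; its hypotheses hold because $A$ and $\Opalg(T)$ admit presheaf objects by assumption. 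That lemma states precisely that the semanticiser square with loose projection $\pi$ is exhibited if and only if the transposed square with tight leg $\widebreve{\pi} \colon D \to \P(\Opalg(T))$, bottom leg $\breve n$, and right leg $k^*$ is a pullback. By \cref{presheaf-object}, $\breve n = \widebreve{E(j, 1)}$ is the nerve $n_j$ and $k^* = {k_T}^*$, so the transposed square is exactly the one displayed in the statement, with $\breve p \defeq \widebreve{\pi}$. As transposition along the presheaf object $\P(\Opalg(T))$ is a bijection between loose-cells $D \lto \Opalg(T)$ and tight-cells $D \to \P(\Opalg(T))$, quantifying over $\pi$ is the same as quantifying over $\breve p$, and the biconditional follows.

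For the remaining assertions, take $u = u_T$ (any tight-cell exhibiting the algebra object is isomorphic to it over $E$). By \cref{density-of-comparison}, the conjoint $\Alg(T)(i_T, 1) \colon \Alg(T) \lto \Opalg(T)$ is isomorphic to a loose-cell that exhibits the semanticiser square with leg $u_T$; transposing as in the previous paragraph gives $\breve p \iso \widebreve{\Alg(T)(i_T, 1)} = n_{i_T}$, the last equality being the definition of the nerve of $i_T$ (\cref{presheaf-object}). That $i_T$ is dense and \ff{} is precisely the conclusion of \cref{density-of-comparison}; the hook on $n_j$ in the diagram records its \ffness{} (immediate from density of $j$), and the hook on $\breve p$ records the \ffness{} of $n_{i_T}$.

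I expect the only real obstacle to be bookkeeping: one must check that the data $(n, k)$ required by \cref{semanticiser-via-pullback} matches, on the nose, the data produced by \cref{semanticiser-theorem}, and that the transposes $\breve n$, $k^*$, $\widebreve{\pi}$ are literally the tight-cells $n_j$, ${k_T}^*$, $\breve p$ of the statement. All of this is immediate from \cref{presheaf-object} together with \cref{presheaf-restriction}, so no new argument is needed — the substance has been carried by \cref{semanticiser-theorem}, \cref{semanticiser-via-pullback}, and \cref{density-of-comparison}.
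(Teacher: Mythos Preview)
Your proposal is correct and matches the paper's own proof, which simply reads ``Direct from \cref{semanticiser-theorem,semanticiser-via-pullback,density-of-comparison}.'' You have spelled out the bookkeeping that the paper leaves implicit, but the decomposition and the lemmas invoked are identical.
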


\begin{proof}
    Direct from \cref{semanticiser-theorem,semanticiser-via-pullback,density-of-comparison}.
\end{proof}

\begin{remark}
    From \cref{pullback-theorem}, taking $j = 1$, we essentially recover \cites[Theorem~35]{street1974elementary}[Proposition~22]{street1978yoneda}, which establish analogous theorems in the context of 2-categories endowed with attributes and Yoneda structures respectively (there is a strong relationship between Yoneda structures and presheaf objects in virtual equipments, as established in \cite[Theorem~4.24]{koudenburg2024formal}).
\end{remark}

The following establishes that every suitably complete virtual equipment admits algebra objects for relative monads with small domain. In the case of enriched categories, it is weaker than the existence theorem of \cite[Corollary~8.20]{arkor2024formal} (which requires neither smallness nor density), but the assumptions are often convenient to verify in practice.

\begin{corollary}
    Let $\X$ be an exact strict \ve{} and let $\jAE$ be a dense tight-cell. If $A$ admits a presheaf object, and $\X$ admits pullbacks along $n_j \colon E \to \P A$, then a $j$-monad admits an algebra object if its opalgebra object admits a presheaf object.
\end{corollary}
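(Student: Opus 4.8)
The plan is to obtain this as an immediate consequence of the nerve theorem (\cref{pullback-theorem}). The first step is to check that all of its hypotheses are in force: $\X$ is exact and $j$ is dense by assumption, and $A$ admits a presheaf object; moreover, exactness guarantees via \cref{opalgebra-object-from-collapse} that the opalgebra object $\Opalg(T)$ exists (with $k_T = \copi_{E(j, T)}$), and it admits a presheaf object by the standing hypothesis on $T$. In particular the restriction tight-cell ${k_T}^* \colon \P(\Opalg(T)) \to \P A$ and the nerve $n_j \colon E \to \P A$ (\cref{presheaf-object}) are both defined.

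Next I would form the pullback. By hypothesis $\X$ admits pullbacks along $n_j$, so the cospan $E \xto{n_j} \P A \xfrom{{k_T}^*} \P(\Opalg(T))$ has a pullback $D$ in the sense of \cref{pullback-of-tight-cells}, with projections $\pi_1 \colon D \to E$ and $\pi_2 \colon D \to \P(\Opalg(T))$. Since $\P(\Opalg(T))$ is a presheaf object, the tight-cell $\pi_2$ is of the form $\breve p$ for the loose-cell $p \defeq \pi_{\Opalg(T)}(1, \pi_2) \colon D \lto \Opalg(T)$, by the universal property in \cref{presheaf-object}. Thus the pullback square just constructed is precisely a square of the shape appearing in \cref{pullback-theorem}, with $u \defeq \pi_1$ and $\breve p = \pi_2$ exhibiting the pullback. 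Applying the ``if'' direction of \cref{pullback-theorem} then shows that $\pi_1 \colon D \to E$ exhibits an algebra object for $T$, which completes the argument.

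I do not expect a genuine obstacle here: the proof is a short assembly of results already established. The only point deserving attention is that ``admits pullbacks along $n_j$'' must be read as the virtual-equipment pullback of \cref{pullback-of-tight-cells}, carrying the multiary two-dimensional universal property, rather than a merely $1$- or $2$-categorical pullback, since it is exactly that stronger property that \cref{pullback-theorem} consumes. Granting this reading, the chain \cref{opalgebra-object-from-collapse}, formation of the pullback, \cref{pullback-theorem} closes the proof.
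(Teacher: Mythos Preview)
Your proposal is correct and takes essentially the same approach as the paper, which simply says the result is direct from \cref{pullback-theorem}. You have just spelled out the routine verification of hypotheses and the formation of the pullback that the paper leaves implicit.
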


\begin{proof}
    Direct from \cref{pullback-theorem}.
\end{proof}

\begin{remark}
    \Cref{pullback-theorem} involves several concepts that look like colimit notions (opalgebra objects and collapses) and several concepts that look like limit notions (algebra objects, semanticisers, pullbacks, and presheaf objects). In this light, \cref{semanticiser-theorem,pullback-theorem} correspond to a notion of exactness, describing an interaction between certain limits and colimits, while \cref{semanticiser-via-pullback,semanticiser-iff-presheaf-object} corresponds to the construction of one kind of limit from another, akin to the construction of arbitrary limits from products and equalisers. Limits and colimits for \vdcs{} have not yet been defined. However, even in the representable case, these concepts do not appear to be double limits and colimits in the classical sense (\cf{}~\cref{presheaf-object-as-limit}), since they make fundamental use of restrictions. It remains to be seen whether there are natural notions of limit and colimit that capture these concepts.
\end{remark}

\section{The enriched nerve theorem}
\label{loose-monads-in-VCat}

We have now developed the abstract theory necessary to establish the nerve theorem for enriched relative monads. In particular, we shall show that \cref{pullback-theorem} may be applied in the \ve{} $\VCat$ of categories enriched in a monoidal category $\V$ (\cite[Definition~8.1]{arkor2024formal}), thereby obtaining the desired nerve theorem. We make no assumptions on the monoidal category $\V$ except where otherwise specified.

First, we must verify the assumptions of \cref{pullback-theorem} on the \ve{} $\VCat$. Recall that a $\V$-distributor $p \colon A \lto A$ comprises an object $p(x, y) \in \V$ for each $x, y \in \ob A$, along with pre- and postcomposition actions. A $\V$-enriched loose-monad thus comprises
\begin{enumerate}
    \item a $\V$-distributor $T \colon A \lto A$;
    \item a $\V$-natural transformation $\circ \colon T, T \tto T$;
    \item a $\V$-natural transformation $\I \colon {} \tto T$,
\end{enumerate}
satisfying associativity and unitality laws.
Unwrapping the definition, we see that each loose-monad comprises, for each pair of objects $x, y \in \ob A$, an object $T(x, y) \in \V$; for each heteromorphism $f \in T(x, y)$ and heteromorphism $g \in T(y, z)$, a heteromorphism $(g \circ f) \in T(x, z)$; and for each morphism $f \colon x \to y$ in $A$, a heteromorphism $\I_f \in T(x, y)$, subject to unitality and associativity axioms.
This structure strongly resembles that of a $\V$-category. In fact, every $\V$-enriched loose-monad defines a $\V$-category $\clps p$, together with an \ioo{} $\V$-functor $A \to \clps p$.\footnotemark{}
\footnotetext{The converse is also true: one may show that $\V$-enriched loose-monads on $A$ are in bijection with \ioo{} $\V$-functors from $A$ (\cf{}~\cites[Corollary~10.4]{lucyshyn2016enriched}[Theorem~2.3.18]{spivak2017string}). However, we defer a general proof to future work, as we have no need for it here.}

\begin{definition}
    Let $T$ be a $\V$-enriched loose-monad on a $\V$-category $A$. The \emph{collapse} of $T$ is the category $\clps T$ defined by $\ob{\clps T} \defeq \ob A$ and $\clps T(x, y) \defeq T(x, y)$. Denote by $\copi_T \colon A \to \clps T$ the \ioo{} $\V$-functor defined by the unit of $T$.
\end{definition}

As the terminology suggests, the collapse of a $\V$-enriched loose-monad exhibits a collapse in the sense of \cref{exact-ve}.

\begin{proposition}
    \label{VCat-is-exact}
    Let $\V$ be a monoidal category. The \ve{} $\VCat$ is exact.
\end{proposition}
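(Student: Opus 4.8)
The plan is to verify, in turn, the five conditions of \cref{exact-ve}, using throughout the explicit description of the collapse $\clps T$ of a $\V$-enriched loose-monad $T$ on $A$ given above. The key observation is that, since $\VCat$ is strict, restriction along the \ioo{} $\V$-functor $\copi_T \colon A \to \clps T$ is essentially transparent: it acts as the identity on objects, and on hom-objects it is computed by whiskering with the functorial action $\mu \mapsto \I_\mu$ of $\copi_T$. Consequently the entire proof reduces to matching up the loose-monad, loose-monad module, and loose-monad transformation axioms of \cref{LMnd} with, respectively, the axioms of a $\V$-functor, a $\V$-distributor, and a $\V$-natural transformation for the collapse $\V$-categories.

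First I would treat the first condition. The equality $\clps T(\copi_T, \copi_T) = T$ of loose-cells follows from the unit laws of the loose-monad $T$, which say precisely that the biaction of $A(1, 1)$ on the loose-cell $T$ is recovered from the multiplication of $T$ by composing with $\I$. For the universal property, a loose-monad morphism $(f, \phi) \colon T \to B(1, 1)$ amounts to a $\V$-functor $f \colon A \to B$ together with morphisms $\phi_{x, y} \colon T(x, y) \to B(\ob f x, \ob f y)$ in $\V$ compatible with the multiplication and unit of $T$ and with composition in $B$; this is exactly the data of a $\V$-functor $[]_f \colon \clps T \to B$ that agrees with $\ob f$ on objects and with $\phi$ on hom-objects, the loose-monad morphism laws translating verbatim into the functoriality axioms. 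Uniqueness is forced: the condition $\copi_T \d []_f = f$ determines the action on objects, and the factorisation of $\phi$ through the cartesian 2-cell $T \tto \clps T(\copi_T, \copi_T)$ determines the action on hom-objects.

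The second condition is immediate, since the collapse of the loose-identity loose-monad $A(1, 1)$ is the $\V$-category with the same objects as $A$ and hom-objects $A(x, y)$ -- that is, $A$ itself -- with $\copi_{A(1, 1)} = 1_A$. For the third condition (\cref{module-collapse}) and the fourth I would argue similarly: by the unit laws, a loose-monad module $p \colon T' \lto T$ of $\V$-enriched loose-monads is exactly a $\V$-distributor $\clps p \colon \clps{T'} \lto \clps T$ with $\clps p(x, y) = p(x, y)$, so that $p = \clps p(\copi_T, \copi_{T'})$ by transparency of restriction; and a loose-monad transformation is exactly a 2-cell of $\VCat$ of the corresponding shape between the collapse $\V$-categories and the $\V$-distributors $\clps{p_i}$, once one checks that the compatibility laws of \cref{LMnd} with the module actions -- \emph{including} the nullary coherence condition -- unwind to $\V$-naturality relative to the collapse $\V$-categories. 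The factorising 2-cell $[]_\psi$ and its uniqueness then follow because the $\V$-naturality conditions on the two sides literally coincide, and the fourth condition is the observation that the collapse of $T$, viewed as the identity module on itself, is the loose-identity $\clps T(1, 1)$ on the object $\clps T$. Finally, for the fifth condition (\cref{loose-cell-between-collapses-is-collapse}), given a $\V$-distributor $q \colon \clps{T'} \lto \clps T$, the module $q(\copi_T, \copi_{T'})$ has the same underlying objects and, after re-indexing along \ioo{} functors, the same actions as $q$, whence $\clps{q(\copi_T, \copi_{T'})} = q$.

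There is no essential obstacle here; the proof is a careful unwinding of definitions. The two places that demand the most attention are the matching of the multiary loose-monad transformation laws of \cref{LMnd} with $\V$-naturality -- in particular the $n = 0$ coherence that, as noted there, is absent from \textcite{leinster2004higher}'s original definition -- and the two-dimensional universal property of \cref{module-collapse}, where one must confirm that the factorisation $[]_\psi$ through the collapses is both well defined and unique as a $\V$-natural transformation.
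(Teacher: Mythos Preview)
Your proposal is correct and follows essentially the same approach as the paper: both arguments verify the five exactness conditions by unwinding the definitions, exploiting that $\copi_T$ is \ioo{} so that restriction along it is transparent on underlying families, and matching the loose-monad, module, and transformation axioms against the $\V$-functor, $\V$-distributor, and $\V$-natural transformation axioms for the collapse categories. If anything, your account is slightly more explicit than the paper's --- you spell out why the unit laws give $\clps T(\copi_T, \copi_T) = T$ and flag the nullary coherence law in \cref{LMnd} --- whereas the paper compresses the construction of $[]_f$ into the phrase ``full image factorisation of $f$'' and leaves the remaining verifications largely implicit.
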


\begin{proof}
    First observe that, for every $\V$-enriched loose-monad $T$, the identity natural transformation on its underlying $\V$-distributor exhibits a cartesian 2-cell, since $\copi_T$ is \ioo{} and the hom-objects of $\clps T$ are defined by $T$.
    \[\begin{tikzcd}
    	A & A \\
    	{\clps T} & {\clps T}
    	\arrow[""{name=0, anchor=center, inner sep=0}, "{\copi_T}"', from=1-1, to=2-1]
    	\arrow[""{name=1, anchor=center, inner sep=0}, "{\copi_T}", from=1-2, to=2-2]
    	\arrow["T"', "\shortmid"{marking}, from=1-2, to=1-1]
    	\arrow["\shortmid"{marking}, Rightarrow, no head, from=2-1, to=2-2]
    	\arrow["\cart"{description}, draw=none, from=0, to=1]
    \end{tikzcd}\]
    Note that $\clps{A(1, 1)} = A(1, 1)$ by definition. Consider a loose-monad morphism $(f, \phi) \colon T \to B(1, 1)$, \ie{} a $\V$-natural transformation
    \[\phi_{x, y} \colon T(x, y) \tto B(\ob f x, \ob f y)\]
    compatible with the multiplication and unit of $T$. The full image factorisation of $f$ into an \ioo{} $\V$-functor followed by a \ff{} $\V$-functor defines a unique factorisation of $(f, \phi)$.

    Now consider a loose-monad module $p \colon T' \lto T$. We define a $\V$-distributor $\clps p \colon \clps{T'} \lto \clps T$ by $\clps p(x, y) \defeq p(x, y)$. The compatibility laws for $p$ correspond exactly to the compatibility laws for $\clps p$. Since $\copi_{T'}$ and $\copi_T$ are identity-on-objects, the identity natural transformation exhibits a cartesian 2-cell.
    \[\begin{tikzcd}
    	A & {A'} \\
    	{\clps T} & {\clps{T'}}
    	\arrow[""{name=0, anchor=center, inner sep=0}, "{\copi_T}"', from=1-1, to=2-1]
    	\arrow[""{name=1, anchor=center, inner sep=0}, "{\copi_{T'}}", from=1-2, to=2-2]
    	\arrow["{\clps p}", "\shortmid"{marking}, from=2-2, to=2-1]
    	\arrow["p"', "\shortmid"{marking}, from=1-2, to=1-1]
    	\arrow["\cart"{description}, draw=none, from=0, to=1]
    \end{tikzcd}\]
    Note that the collapse of the identity loose-monad module corresponding to a loose-monad $T$ is precisely the collapse of $T$. Consider a loose-monad transformation $\psi$ as in \cref{exact-ve}, which is a $\V$-natural transformation $p_1(x_0, x_1), \ldots, p_n(x_{n - 1}, x_n) \tto q(\ob f x_0, \ob{f'} x_n)$ satisfying compatibility laws. It is clear that $\psi$ defines a $\V$-natural transformation framed by the full images as previously described, factoring $\psi$. Finally, consider a loose-monad module $p \colon \clps T \lto \clps{T'}$ between collapses. Since the coprojections $\copi_T$ and $\copi_{T'}$ are \ioo{}, we trivially have $\clps{p(\copi_T, \copi_{T'})} = p$.
\end{proof}

\begin{remark}
    \label{arrows}
    Loose-monads have also been called \emph{promonads} or \emph{profunctor monads}, in accordance with the terminology \emph{profunctors} for what we call distributors. In theoretical computer science, they appear as the \emph{arrows} of \cite[\S4]{hughes2000generalising} (\cf{}~\cite[Definition~4.1]{heunen2006arrows}).

    Modulo strength, loose-monad morphisms from $T$ to $A(1, 1)$ are the \emph{algebras for an arrow} of \cites[Definition~3]{jacobs2006freyd}[Definition~6.5]{jacobs2009categorical}. The collapse appears as the \emph{Freyd category} associated to the arrow $T$ (\cf{}~\cite[Theorem~5.4]{heunen2006arrows}), and the one-dimensional universal property of a collapse is consequently established for the equipment $\Cat$ in \cites[Lemma~7]{jacobs2006freyd}[Lemma~6.1]{jacobs2009categorical}.
\end{remark}

\begin{corollary}[{\cite[Theorem~8.21]{arkor2024formal}}]
    Let $\V$ be a monoidal category. Every relative monad in $\VCat$ admits an opalgebra object.
\end{corollary}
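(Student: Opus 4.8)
The plan is to obtain this statement as an immediate consequence of the formal results already in place, with essentially no additional work. First I would invoke \cref{VCat-is-exact} to conclude that the \ve{} $\VCat$ is exact. Given a relative monad $T$ in $\VCat$ with root $\jAE$ and carrier $t \colon A \to E$, \cref{from-monad-to-associated-loose-monad} furnishes the associated $\V$-enriched loose-monad $E(j, T)$ on $A$, whose underlying $\V$-distributor has hom-objects $E(j, t)(x, y) = E(\ob j x, \ob t y)$ and whose multiplication and unit are built from the extension operator $\dag$ and unit $\eta$ of $T$.

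Next I would apply \cref{opalgebra-object-from-collapse}: since $\VCat$ is exact, the coprojection $\copi_{E(j, T)} \colon A \to \clps{E(j, T)}$ exhibits an opalgebra object for $T$, so that $\Opalg(T)$ may be taken to be $\clps{E(j, T)}$. It then only remains to identify this datum concretely, using the description of collapses in $\VCat$ preceding \cref{VCat-is-exact}: the collapse $\clps{E(j, T)}$ is the $\V$-category with objects $\ob A$ and hom-objects $\clps{E(j, T)}(x, y) = E(\ob j x, \ob t y)$, with composition given by the loose-monad multiplication (that is, Kleisli extension) and identities given by its unit, while $\copi_{E(j, T)}$ is the \ioo{} $\V$-functor $k_T$ determined by that unit. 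Thus $\clps{E(j, T)}$ is precisely the Kleisli $\V$-category of $T$, recovering \cite[Theorem~8.21]{arkor2024formal}.

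Since the substantive content --- exactness of $\VCat$ and the construction of opalgebra objects from collapses --- has already been carried out in \cref{VCat-is-exact} and \cref{opalgebra-object-from-collapse}, there is no real obstacle; the proof is a one-line composition of those two results. The only point worth remarking is that this yields a route to the existence of Kleisli $\V$-categories that proceeds via the collapse (equivalently, Freyd category) construction for loose-monads, rather than by a direct verification of their universal property.
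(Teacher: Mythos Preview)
Your proposal is correct and takes essentially the same approach as the paper: invoke \cref{VCat-is-exact} for exactness of $\VCat$, then apply \cref{opalgebra-object-from-collapse}. The paper's proof is precisely this one-line composition; your additional identification of the collapse as the Kleisli $\V$-category is correct but not needed for the bare existence statement.
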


\begin{proof}
    By \cref{VCat-is-exact}, $\VCat$ is exact, so admits opalgebra objects by \cref{opalgebra-object-from-collapse}.
\end{proof}

Next, we verify that the notion of presheaf object in \cref{presheaf-object} does indeed capture the notion of presheaf $\V$-category (\cite[Definition~8.5]{arkor2024formal}).

\begin{proposition}
    \label{VCat-admits-presheaf-objects}
    Let $\V$ be a monoidal category and let $A$ be a small $\V$-category admitting a presheaf $\V$-category. The presheaf $\V$-category $\P A$ is a presheaf object in $\VCat$, for which the Yoneda embedding $\yo_A \colon A \to \P A$ exhibits a presheaf embedding.
\end{proposition}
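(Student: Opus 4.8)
The plan is to realise the presheaf object concretely as the evaluation distributor on the presheaf $\V$-category, which we may present as $\P A = [A\op, \V]$ and which exists by hypothesis in the sense of \cite[Definition~8.5]{arkor2024formal}, and then to verify the two requirements of \cref{presheaf-object}: density of the projection, and the one-dimensional universal property. Concretely, I would take $\pi_A \colon \P A \lto A$ to be the $\V$-distributor with components $\pi_A(a, \phi) \defeq \phi(a) \in \V$ for $a \in \ob A$ and $\phi \in \ob{\P A}$, with left $A$-action induced by the functorial action of presheaves and right $\P A$-action induced by evaluation; these satisfy naturality and associativity by the very axioms for a $\V$-functor $A\op \to \V$, so $\pi_A$ is a well-defined loose-cell in $\VCat$.

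For the one-dimensional universal property, recall that a loose-cell $p \colon X \lto A$ in $\VCat$ is a $\V$-distributor, i.e.\ a family $p(a, x) \in \V$ ($a \in \ob A$, $x \in \ob X$) with compatible left $A$-action and right $X$-action. The defining property of the presheaf $\V$-category (this is the classical correspondence between $\V$-distributors into $A$ and $\V$-functors into $[A\op, \V]$; see \cite[Definition~8.5]{arkor2024formal}) yields a bijection between such distributors and $\V$-functors $X \to \P A$, sending $p$ to the $\V$-functor $\breve p$ with $\breve p(x) = p(-, x)$, the right $X$-action of $p$ supplying the functorial structure. One then checks that $\pi_A(1, \breve p)$ has component $\pi_A(a, \breve p(x)) = \breve p(x)(a) = p(a, x)$ at $(a, x)$, so that $\pi_A(1, \breve p) = p$, and $\breve p$ is the unique $\V$-functor with this property by the bijection. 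This is precisely the universal property demanded in \cref{presheaf-object}.

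For density (\cref{dense-loose-cell}), I must check that $1_{\pi_A}$ exhibits $\P A(1, 1)$ as the right lift $\pi_A \rf \pi_A$. Unwinding the universal property of right lifts in $\VCat$ pointwise, a loose-cell $r \colon \P A \lto \P A$ together with a 2-cell $r, \pi_A \tto \pi_A$ is a right lift exactly when, compatibly, $r(\phi, \psi) \iso \int_{a \in A}[\pi_A(a, \phi), \pi_A(a, \psi)] = \int_{a \in A}[\phi(a), \psi(a)]$. But by the construction of the presheaf $\V$-category these ends are precisely the hom-objects $\P A(\phi, \psi)$ — indeed their existence is what the hypothesis ``$A$ admits a presheaf $\V$-category'' guarantees, so no closedness or completeness of $\V$ beyond this is needed — and the comparison 2-cell associated to $r = \P A(1, 1)$ is the evaluation, which is the identity. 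Hence $\pi_A$ is dense, and $(\P A, \pi_A)$ is a presheaf object.

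Finally, the presheaf embedding of \cref{presheaf-object} is $\yo_A = \widebreve{A(1, 1)}$, and by the explicit formula for $\breve{(-)}$ obtained above we have $\yo_A(a) = A(1, 1)(-, a) = A(-, a)$, the representable presheaf, so $\yo_A$ coincides with the classical Yoneda $\V$-functor and exhibits a presheaf embedding by definition. The only genuinely delicate point I anticipate is bookkeeping: matching the conventions of \cite{arkor2024formal} for the variance of $\V$-distributors and the exact shape of the universal property of $\P A$ recorded there; the end computation underlying density, though the most substantive step, is the textbook formula for hom-objects of a presheaf $\V$-category.
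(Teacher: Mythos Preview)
Your proposal is correct and follows essentially the same approach as the paper: both take $\pi_A$ to be the evaluation distributor, obtain $\breve p$ from the standard distributor/functor correspondence (the paper spells out more explicitly why the action on hom-objects is forced, via the identification $\P A(\breve p x, \breve p x') = (p \rf p)(x,x')$), and derive density from the fact that the hom-objects of $\P A$ compute the right lift $\pi_A \rf \pi_A$, which the paper packages as a citation to \cite[Lemma~8.7]{arkor2024formal}. One small caution: your end-of-internal-homs formula $\int_a[\phi(a),\psi(a)]$ presupposes that $\V$ is closed, which is not assumed here; you correctly note that only the existence of the hom-objects $\P A(\phi,\psi)$ is needed, so phrase the density step directly in terms of their universal property rather than via that formula.
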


\begin{proof}
    We define a $\V$-distributor $\pi_A \colon \P A \lto A$ assigning $(a, p) \mapsto p(a)$, with left-action
    \[\{ \circ^p_{a', a} \colon A(a', a) \otimes p(a) \to p(a') \}_{a' \in \ob A, a \in \ob A, p \in \ob{\P A}}\]
    given by the left-action of the presheaf $p$, and right-action given by the universal $\V$-natural transformation associated to the object of $\V$-natural transformations from $p$ to $p'$.
    \[\{ \varpi^{p, p'}_a \colon p(a) \otimes \P A(p, p') \to p'(a) \}_{a \in \ob A, p \in \ob{\P A}, p' \in \ob{\P A}}\]
    The laws for a $\V$-distributor follow from the laws for a presheaf, together with $\V$-naturality. Density of $\pi_A$ follows from \cite[Lemma~8.7]{arkor2024formal}.

    Let $p \colon X \lto A$ be a $\V$-distributor. We aim to define a $\V$-functor $\breve p \colon X \to \P A$. Observe that, in order that $\pi_A$ exhibit a presheaf object, for each $x \in \ob X$ and $a \in \ob A$, we must have $(\pi_A(1, \breve p))(a, x) = \pi_A(a, \ob{\breve p}(x)) = \ob{\breve p}(x)(a)$, with the left-action of $\ob{\breve p}(x)$ given by the left-action of $p$ at $x$. This uniquely determines the action of $\breve p$ on objects. Indeed, this is a valid assignment, as the laws for the $\V$-presheaf then follow from the laws for the $\V$-distributor. For the action of $\breve p$ on hom-objects, observe that, for each pair of objects $x, x' \in \ob X$, we have $\P A(\ob{\breve p}(x), \ob{\breve p}(x')) = \P A(p({-}, x), p({-}, x')) = (p \rf p)(x, x')$ by \cite[Lemma~8.7]{arkor2024formal}. The right-action of $\pi_A$ thus uniquely determines the choice of the morphism $\breve p_{x, x'} \colon X(x, x') \to \P A(\ob{\breve p}(x), \ob{\breve p}(x'))$ in $\V$ to be that corresponding to the canonical $\V$-natural transformation $X(1, 1) \tto p \rf p$.

    That the Yoneda embedding exhibits a presheaf embedding is precisely the Yoneda lemma~\cite[Example~8.6]{arkor2024formal}.
\end{proof}

\begin{remark}
    While presheaf $\V$-categories may exist even on $\V$-categories that are not small, existence of presheaf $\V$-categories is generally a strong size constraint. For instance, when $\V = \Set$, a $\V$-category admits a presheaf $\V$-category (\ie{} a locally small presheaf category) if and only it is essentially small~\cite{foltz1979legitimite,freyd1995size}. We do not know whether there exists an example of a non-thin monoidal category $\V$ together with a $\V$-category $A$ that admits a presheaf $\V$-category $\P A$ but that is not \emph{Morita-small}, \ie{} for which there does not exist a small $\V$-category $A'$ for which $\P A \equiv \P(A')$.
\end{remark}

Finally, we observe that the usual construction of pullbacks of $\V$-functors produces pullbacks in the \ve{} $\VCat$.

\begin{proposition}
    \label{pullbacks-of-enriched-categories}
    Let $\V$ be a monoidal category admitting pullbacks. Then the \ve{} $\VCat$ admits pullbacks.
\end{proposition}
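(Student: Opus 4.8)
The plan is to construct the pullback directly and then verify both the one- and two-dimensional universal properties of \cref{pullback-of-tight-cells}. Given a cospan $f \colon A \to C \from B \cocolon g$ of $\V$-functors, I would define a $\V$-category $f \times_C g$ whose objects are pairs $(a, b)$ with $\ob f a = \ob g b$, and whose hom-object $(f \times_C g)\big((a, b), (a', b')\big)$ is the pullback in $\V$ of the cospan
\[
A(a, a') \xto{\ob f} C(\ob f a, \ob f a') \xfrom{\ob g} B(b, b'),
\]
which exists by hypothesis. The pullback projections assemble into $\V$-functors $\pi_1 \colon f \times_C g \to A$ and $\pi_2 \colon f \times_C g \to B$, and the composition and identities of $f \times_C g$ are induced by the universal property of the pullbacks in $\V$: the compatibility needed to factor through the relevant pullback (that the two resulting maps into $C(\ob f a, \ob f a'')$ agree) follows from $\V$-functoriality of $f$ and $g$ together with the defining condition $\ob f \c \pi_1 = \ob g \c \pi_2$ on hom-objects. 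The associativity and unit laws, and the $\V$-functoriality of $\pi_1$ and $\pi_2$, then follow from the corresponding laws in $A$ and $B$ using that the pair of projections out of a pullback in $\V$ is jointly monic.

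First I would check that $(f \times_C g, \pi_1, \pi_2)$ is a pullback in the underlying $1$-category of objects and tight-cells: given $\V$-functors $h_1 \colon X \to A$ and $h_2 \colon X \to B$ with $h_1 \d f = h_2 \d g$, the assignment $x \mapsto (\ob{h_1} x, \ob{h_2} x)$ on objects, together with the map into the pullback induced by $(h_1)_{x, y}$ and $(h_2)_{x, y}$ on hom-objects, defines the unique $\V$-functor $\tp{h_1, h_2} \colon X \to f \times_C g$ through which $h_1$ and $h_2$ factor, uniqueness following once more from joint monicity of $\pi_1$ and $\pi_2$.

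The heart of the argument is the two-dimensional universal property: that the square of $\V$-distributors and $\V$-natural transformations obtained from the hom-objects, displayed on the right of \cref{pullback-of-tight-cells}, is a pullback of $2$-cells in the sense of \cref{pullback-of-2-cells}. I would verify this componentwise. For a composable chain $s_1, \dots, s_n$ of $\V$-distributors on $f \times_C g$, a $2$-cell $s_1, \dots, s_n \tto A(\pi_1, \pi_1)$ is a $\V$-natural family of maps $s_1(x_0, x_1) \otimes \dots \otimes s_n(x_{n-1}, x_n) \to A(\ob{\pi_1} x_0, \ob{\pi_1} x_n)$, and likewise for $B(\pi_2, \pi_2)$; the compatibility required in \cref{pullback-of-2-cells} says precisely that the two composites into $C(\ob f \ob{\pi_1} x_0, \ob f \ob{\pi_1} x_n) = C(\ob g \ob{\pi_2} x_0, \ob g \ob{\pi_2} x_n)$ coincide. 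Since $(f \times_C g)(1, 1)(x_0, x_n)$ is by construction the pullback in $\V$ of that cospan through $C$, the universal property of the pullback yields a unique family of maps into it; this family is automatically $\V$-natural because precomposing it with either jointly monic projection recovers one of the two given $\V$-natural families, and it is the required mediating $2$-cell $\tp{\chi_1, \chi_2}$. (Alternatively, once the statement is established for the loose-identity square this can be propagated using \cref{restriction-of-pullback}.)

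I expect the main obstacle to be purely bookkeeping rather than conceptual: at each step — composition, identities, and the mediating cells — the needed $\V$-naturality or $\V$-functoriality reduces, by joint monicity of pullback projections in $\V$, to the already-known $\V$-functoriality of $f$ and $g$ or $\V$-naturality of the given cells. In particular, it is worth noting that no exactness hypothesis on $\V$ (such as $\otimes$ preserving pullbacks) is required, since the monoidal product never needs to be pushed through a pullback in the construction.
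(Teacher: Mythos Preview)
Your proposal is correct and follows essentially the same approach as the paper: construct the pullback $\V$-category with hom-objects given by pullbacks in $\V$, check the one-dimensional universal property directly, and verify the two-dimensional universal property componentwise via the pullback universal property in $\V$. You spell out more details than the paper does (in particular the joint-monicity argument for $\V$-naturality and the observation that no exactness of $\otimes$ is needed), but the strategy is identical.
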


\begin{proof}
  Given a cospan of $\V$-functors $f \colon A \to C \from B \cocolon g$, the apex of the pullback is the $\V$-category $f \times_C g$, whose objects are pairs $(a \in \ob{A}, b \in \ob{B})$ such that $\ob{f}a = \ob{g}b$, and whose hom-objects are given by pullbacks in $\V$:
\[\begin{tikzcd}
	{(f \times_C g)((a, b), (a', b'))} & {B(b, b')} \\
	{A(a, a')} & {C(\ob fa, \ob gb')}
	\arrow["{g_{b, b'}}", from=1-2, to=2-2]
	\arrow[""{name=0, anchor=center, inner sep=0}, "{f_{a, a'}}"', from=2-1, to=2-2]
	\arrow[from=1-1, to=2-1]
	\arrow[from=1-1, to=1-2]
	\arrow["\lrcorner"{anchor=center, pos=0.125}, draw=none, from=1-1, to=0]
\end{tikzcd}\]
  Composition and identities in $f \times_C g$ are uniquely determined by the fact that the projections form a span of $\V$-functors $A \xfrom{\pi_1} f \times_C g \xto{\pi_2} C$: this is the pullback in the 1-category of $\V$-categories and $\V$-functors.
  The actions of the $\V$-functors form a pullback of 2-cells in $\VCat$, since every $\V$-natural transformation $s_1, \dots, s_n \tto (A \times_C B)(1,1)$ is, in particular, a family of morphisms to which we may apply the universal property of pullbacks in $\V$.
\end{proof}

The following is now immediate.

\begin{theorem}
    \label{enriched-pullback-theorem}
    Let $\V$ be a complete left- and right-closed monoidal category and let $\jAE$ be a dense $\V$-functor with small domain. For a $j$-monad $T$, the $\V$-category of $T$-algebras may be constructed as the following pullback in $\VCat$.
    \[\begin{tikzcd}
    	{\Alg(T)} & {\P(\Kl(T))} \\
    	E & {\P A}
    	\arrow["{u_T}"', from=1-1, to=2-1]
    	\arrow["{{k_T}^*}", from=1-2, to=2-2]
    	\arrow[""{name=0, anchor=center, inner sep=0}, "{n_j}"', hook, from=2-1, to=2-2]
    	\arrow[hook, from=1-1, to=1-2]
    	\arrow["\lrcorner"{anchor=center, pos=0.125}, draw=none, from=1-1, to=0]
    \end{tikzcd}\]
    Above, we denote by $\P$ the $\V$-enriched presheaf construction; and by $n_j \colon E \to \P A$ the nerve of $j$, which sends an object $e \in \ob E$ to the restricted Yoneda embedding $E(j{-}, e)$.

    Furthermore, the unlabelled $\V$-functor is $\V$-naturally isomorphic to the nerve of the comparison $\V$-functor $i_T \colon \Kl(T) \to \Alg(T)$, exhibiting it as dense and \ff{}.
\end{theorem}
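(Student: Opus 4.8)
The plan is to instantiate the abstract nerve theorem (\cref{pullback-theorem}) in the virtual equipment $\X = \VCat$, so that the work consists entirely of checking its hypotheses and then translating the abstract universal properties into their concrete incarnations in $\VCat$. First I would record that $\VCat$ is a strict exact \ve{} by \cref{VCat-is-exact}, and that the root $j$ is dense by assumption, so the two standing structural hypotheses of \cref{pullback-theorem} are met.

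Next I would verify the presheaf-object hypotheses. Since $A$ is small and $\V$ is complete and left- and right-closed, $A$ admits a presheaf $\V$-category $\P A$ (this is the standard existence result: the closedness conditions provide the internal homs in each variance and completeness the requisite ends), and by \cref{VCat-admits-presheaf-objects} the $\V$-category $\P A$ is a presheaf object in $\VCat$, with $\yo_A$ as presheaf embedding. It then remains to see that $\Opalg(T)$ admits a presheaf object. Here I would use that in $\VCat$ the opalgebra object for $T$ is the Kleisli $\V$-category $\Kl(T)$~\cite[\S8.4]{arkor2024formal}; since $k_T \colon A \to \Kl(T)$ is \ioo{}, this $\V$-category is again small, so $\P(\Kl(T))$ likewise exists and is a presheaf object by \cref{VCat-admits-presheaf-objects}. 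Completeness of $\V$ also supplies pullbacks, whence $\VCat$ admits pullbacks by \cref{pullbacks-of-enriched-categories}; in particular the pullback of the cospan formed by $n_j \colon E \to \P A$ and ${k_T}^* \colon \P(\Kl(T)) \to \P A$ exists.

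With all hypotheses in place, \cref{pullback-theorem} applies. Unwinding it: the algebra object for $T$ in $\VCat$ is, by \cite[\S8.3]{arkor2024formal}, the $\V$-category $\Alg(T)$ together with the forgetful $\V$-functor $u_T \colon \Alg(T) \to E$, and \cref{pullback-theorem} then asserts precisely that $u_T$, together with a fully faithful $\V$-functor $\breve p \colon \Alg(T) \to \P(\Kl(T))$, exhibits the displayed square as a pullback in $\VCat$, with $\breve p \iso n_{i_T}$ and hence the comparison $\V$-functor $i_T \colon \Kl(T) \to \Alg(T)$ dense and \ff{}; here ${k_T}^* \colon \P(\Kl(T)) \to \P A$ is the restriction along the opalgebra coprojection $k_T$, which matches the label in the statement. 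I do not anticipate a genuine obstacle, since all the substantive content is already carried by \cref{pullback-theorem}, \cref{VCat-is-exact}, \cref{VCat-admits-presheaf-objects}, and \cref{pullbacks-of-enriched-categories}; the only point needing care is the bookkeeping that identifies $\Opalg(T)$ with $\Kl(T)$ and confirms that it is small, so that it admits a presheaf $\V$-category, together with noting that the left- and right-closedness of $\V$ is invoked solely to guarantee the existence of the two presheaf $\V$-categories.
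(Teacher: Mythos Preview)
Your proposal is correct and follows essentially the same approach as the paper: instantiate \cref{pullback-theorem} in $\VCat$, invoking \cref{VCat-is-exact} for exactness, \cref{VCat-admits-presheaf-objects} for presheaf objects, and \cref{pullbacks-of-enriched-categories} for pullbacks. You even spell out the one step the paper leaves implicit, namely that $\Opalg(T) = \Kl(T)$ is small because $k_T$ is \ioo{}, so that it too admits a presheaf object.
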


\begin{proof}
    Follows directly from \cref{pullback-theorem}, using that $\VCat$ is exact (\cref{VCat-is-exact}) and, under the given assumptions, admits presheaf objects for small $\V$-categories (\cite[\S8.1]{arkor2024formal} and \cref{VCat-admits-presheaf-objects}) and pullbacks (\cref{pullbacks-of-enriched-categories}).
\end{proof}

\begin{remark}
    When $A$ is large, presheaf $\V$-categories can no longer be assumed to exist, and so \cref{pullback-theorem} may not be applied. In this case, \cref{semanticiser-theorem} may be used directly.
\end{remark}

\begin{remark}
    If an algebra object for $T$ is already known to exist, the relative monadicity theorem~\cite{arkor2024relative} facilitates an alternative proof of \cref{pullback-theorem}, as was sketched in \cite[Example~5.11]{arkor2024relative}. However, we find this alternative proof strategy to be conceptually dissatisfying, as the assumption that the algebra object exists obscures the fact that the existence of the pullback is both necessary and sufficient for the existence of the algebra object.
\end{remark}

It follows from \cite[Corollary~8.20]{arkor2024formal} that, when $\V$ is complete and closed, $\V$-enriched relative monads with small domains admit algebra objects. \Cref{enriched-pullback-theorem} thus establishes the same result under almost the same conditions, except with the additional assumption that the root be dense (however, the conclusion is also stronger, establishing that $i_T$ is dense).

\section{Loose-monads and \texorpdfstring{$\yo$}{Yoneda}-relative monads}
\label{loose-monads-and-yo-relative-monads}

We conclude by observing that presheaf objects permit loose-monads to be captured by relative monads. In doing so, we give another application of the semanticiser theorem, in characterising the algebras for $\yo$-relative monads.

\begin{theorem}
    \label{yo-relative-monads}
    Let $\X$ be a strict virtual equipment, and let $A$ be an object admitting a presheaf object. There is an isomorphism
    \[\RMnd(\yo_A) \iso \LMnd(A)\]
    between the category of $\yo_A$-monads and the category of loose-monads on $A$.
\end{theorem}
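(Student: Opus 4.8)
The plan is to read the correspondence off the universal property of the presheaf object $\P A$, match the structure 2-cells using the Yoneda lemma and the right-lift description of presheaf homs, and then identify the axioms via \cref{relative-monad-as-section}. Fix $A$ with its presheaf object $\pi_A \colon \P A \lto A$ and presheaf embedding $\yo_A = \widebreve{A(1,1)}$. To a $\yo_A$-monad with carrier $t \colon A \to \P A$ I associate the loose-cell $T \defeq \pi_A(1, t) \colon A \lto A$, and to a loose-monad on $A$ with underlying loose-cell $T$ I associate the tight-cell $t \defeq \breve T \colon A \to \P A$. By \cref{presheaf-via-adjunction} these are mutually inverse and form the object part of an isomorphism of categories $\X[A, \P A] \iso \X\lh{A, A}$ sending $\yo_A$ to $A(1,1)$; hence a 2-cell $\gamma \colon t \tto t'$ between carriers is the same datum as a globular 2-cell $\phi \colon T \tto T'$ between loose-cells (this will match morphisms on the two sides once the monad structures are matched), and a 2-cell $A(1,1) \tto T$ is the same datum as a 2-cell $\yo_A \tto t$ (this already identifies the loose-monad unit with the relative-monad unit).

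For the extension operator against the multiplication I use that $\P A$ provides the relevant right lifts. By \cref{Yoneda}, restricting $\P A(\yo_A, 1) \iso \pi_A$ along $t$ gives $\P A(\yo_A, t) \iso \pi_A(1, t) = T$; and by \cref{presheaf-hom-is-right-lift}, since $\breve T = t$, we have $\P A(t, t) \iso T \rf T$. The universal property of the right lift identifies 2-cells $T \tto T \rf T$ with 2-cells $T, T \tto T$, so the extension operator $\dag \colon \P A(\yo_A, t) \tto \P A(t, t)$ is the same datum as a candidate loose-monad multiplication $\mu \colon T, T \tto T$, and vice versa. Thus the underlying data of the two structures are in bijection.

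To match the axioms, I invoke \cref{relative-monad-as-section}: in a strict virtual equipment a $\yo_A$-monad with carrier $t$ is exactly a $\P A(t,t)$-section whose carrier is the restriction $\P A(\yo_A, t)$ (the root recovered by the proposition being $\yo_A$ for this presentation). Under the identifications above, the monoid $\P A(t, t)$ induced by $t$ becomes the endomorphism monoid $T \rf T$ of the loose-cell $T = \pi_A(1, t)$ in the multicategory $\X\lh{A, A}$, and a section of such an endomorphism monoid with carrier $T$ is precisely a monoid --- i.e.\ loose-monad --- structure on $T$: the section map is the transpose $T \tto T \rf T$ of the multiplication, the retraction evaluates at the unit, the splitting equation is the left-unit law, and the two equations of \cref{monoid-section} are the right-unit and associativity laws, with \cref{section-monoid-morphism,monoid-section-is-monoid} supplying uniqueness and bookkeeping (this is the Cayley-type representation of a monoid as a section of its endomorphism monoid, remarked on after \cref{relative-monad-as-section}). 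So $\yo_A$-monad structures on $t$ and loose-monad structures on $T$ coincide. Finally, a 2-cell $\gamma \colon t \tto t'$ is a morphism of $\yo_A$-monads iff it is compatible with the units and extension operators, and transposing this along the presheaf and right-lift universal properties yields exactly the conditions that the corresponding $\phi \colon T \tto T'$ be compatible with the units and multiplications, i.e.\ be a loose-monad morphism. All correspondences used are natural, so functoriality in both directions is automatic, giving the isomorphism $\RMnd(\yo_A) \iso \LMnd(A)$.

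The one step I expect to require genuine care is the coherence assertion in the third paragraph: that the monoid structure carried by $\P A(t, t)$ qua a $t$-induced representable monoid coincides, under $\P A(t, t) \iso T \rf T$, with the canonical monoid structure of $T \rf T$ coming from the right-lift counit $T, (T \rf T) \tto T$ in $\X\lh{A, A}$. This is a diagram chase using that counit, the projection $\pi_A$, and strict functoriality of restriction; an alternative route --- matching the three relative-monad laws term by term against the three loose-monad laws purely by transposition, without passing through \cref{relative-monad-as-section} --- sidesteps the abstract comparison but carries out the same computation by hand.
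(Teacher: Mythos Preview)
Your route is correct in outline but differs from the paper's. The paper's proof is a two-liner: \cite[Theorem~4.22]{arkor2024formal} already provides, for any root $j$, a pullback
\[
\begin{tikzcd}
\RMnd(j) \ar[r] \ar[d] & \LMnd(A) \ar[d] \\
\X[A,E] \ar[r,"{E(j,-)}"'] & \X\lh{A,A}
\end{tikzcd}
\]
and specialising to $j=\yo_A$, $E=\P A$, the bottom functor is $\P A(\yo_A,-)$, which is an isomorphism by \cref{Yoneda} and \cref{presheaf-via-adjunction}; hence so is the top. All the axiom-matching you carry out by hand is packaged inside that external reference.

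Your approach trades the external citation for an argument internal to this paper, via \cref{relative-monad-as-section}. That buys self-containment but costs you two steps the paper's proof avoids. First, the coherence you flag --- that the representable monoid $\P A(t,t)$ agrees under $\P A(t,t)\iso T\rf T$ with the canonical right-lift monoid structure on $T\rf T$ --- really must be checked; it amounts to verifying that the counit $\cp t$ transports to the evaluation $T,(T\rf T)\tto T$, which is a short but necessary chase. Second, your appeal to ``the Cayley-type representation \ldots\ remarked on after \cref{relative-monad-as-section}'' overshoots: that remark is one-directional (monoids embed as sections), whereas you need the full bijection between monoid structures on $T$ and $(T\rf T)$-sections with carrier $T$. \Cref{monoid-section-is-monoid} gives you a monoid from a section, but you must still argue that the two constructions are mutually inverse (the section produced from the induced monoid recovers the original section, and vice versa). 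This is straightforward --- indeed your term-by-term alternative in the last paragraph is essentially this computation --- but it is not supplied by the citations you give.
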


\begin{proof}
    By \cite[Theorem~4.22]{arkor2024formal}, we have the following pullback of categories.
    \[\begin{tikzcd}[column sep=huge]
    	{\RMnd(\yo_A)} & {\LMnd(A)} \\
    	{\X[A, \P A]} & {\X\lh{A, A}}
    	\arrow[""{name=0, anchor=center, inner sep=0}, "{\P A(\yo_A, {-})}"', hook, from=2-1, to=2-2]
    	\arrow["{U_{\yo_A}}"', from=1-1, to=2-1]
    	\arrow["{U_A}", from=1-2, to=2-2]
    	\arrow["{\P A(\yo_A, {-})}", hook, from=1-1, to=1-2]
    	\arrow["\lrcorner"{anchor=center, pos=0.125}, draw=none, from=1-1, to=0]
    \end{tikzcd}\]
    However, by \cref{presheaf-via-adjunction}, the bottom functor is an isomorphism, hence so too is the top functor.
\end{proof}

\begin{remark}
    From \cref{yo-relative-monads}, we recover \cites[Theorem~9]{altenkirch2010monads}[Theorems~5.2 \& 5.4]{altenkirch2015monads} regarding the correspondence between arrows (\cref{arrows}) and monads relative to the Yoneda embedding.
\end{remark}

Given that $\yo$-relative monads correspond to loose-monads, we should hope that it is possible also to characterise their (op)algebra objects in terms of the loose-monads. In an exact equipment, this is indeed possible. First, we observe the following relationship between semanticisers and presheaf objects.

\begin{lemma}
    \label{semanticiser-iff-presheaf-object}
    The following square exhibits a semanticiser if and only if $\pi \colon P \lto B$ exhibits a presheaf object and $\breve r \colon P \to \P A$ exhibits the restriction of $f$.
    \[\begin{tikzcd}
    	P & B \\
    	{\P A} & A
    	\arrow["{\pi_A}"', "\shortmid"{marking}, from=2-1, to=2-2]
    	\arrow["f"', from=2-2, to=1-2]
    	\arrow["{\breve r}"', from=1-1, to=2-1]
    	\arrow["\pi", "\shortmid"{marking}, from=1-1, to=1-2]
    \end{tikzcd}\]
\end{lemma}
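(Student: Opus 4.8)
The plan is to obtain both implications from \cref{semanticiser-density}. Since the projection $\pi_A \colon \P A \lto A$ of the presheaf object for $A$ is dense, that lemma reduces the assertion ``the square exhibits a semanticiser'' to the conjunction of the one-dimensional universal property \cref{semanticiser-1} together with density of the loose-cell $\pi \colon P \lto B$, provided the square commutes in the sense that $\pi(f, 1) = \pi_A(1, \breve r)$. After this reduction the claim becomes an equivalence between (a) [\cref{semanticiser-1} holds, $\pi$ is dense, and $\pi(f, 1) = \pi_A(1, \breve r)$] and (b) [$\pi$ exhibits a presheaf object for $B$, and $\breve r$ exhibits the restriction of $f$, so that $\breve r = f^*$ in the notation of \cref{presheaf-object}]. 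In (b), density of $\pi$ is part of the definition of a presheaf object (\cref{presheaf-object}), and ``$\breve r$ exhibits the restriction of $f$'' unwinds — by \cref{presheaf-object}, once $A$ admits a presheaf object — to precisely the equation $\pi(f, 1) = \pi_A(1, \breve r)$. So the genuine content is the equivalence of \cref{semanticiser-1} with the one-dimensional universal property of $\P B \defeq P$.

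For the direction (b) $\Rightarrow$ (a) I would verify \cref{semanticiser-1} directly. Given $e \colon X \to \P A$ and $p \colon X \lto B$ with $p(f, 1) = \pi_A(1, e)$, I would take $\tp{e, p}$ to be the transpose $\breve p \colon X \to \P B$ of $p$ along $\pi$; then $\pi(1, \tp{e, p}) = p$ is immediate, and $\tp{e, p} \d \breve r = \breve p \d f^{*} = \widebreve{p(f, 1)} = \widebreve{\pi_A(1, e)} = e$ by \cref{presheaf-restriction} and the defining property of transposes, while uniqueness of $\tp{e, p}$ is inherited from uniqueness of $\breve p$. This direction is routine.

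For the direction (a) $\Rightarrow$ (b) I would show that $(P, \pi)$ satisfies the one-dimensional universal property of a presheaf object for $B$. For a loose-cell $q \colon X \lto B$ I would set $e \defeq \widebreve{q(f, 1)}$, using that $A$ admits a presheaf object so that $\pi_A(1, e) = q(f, 1)$, and define the transpose of $q$ to be $\tp{e, q} \colon X \to P$, whose existence is \cref{semanticiser-1} and which satisfies $\pi(1, \tp{e, q}) = q$. For uniqueness, given any $g \colon X \to P$ with $\pi(1, g) = q$, a two-line computation with the restriction-composition law $p(h_1, h_2)(h_1', h_2') = p(h_1 h_1', h_2 h_2')$ gives $\pi_A(1, g \d \breve r) = \pi(f, 1)(1, g) = \pi(f, g) = \pi(1, g)(f, 1) = q(f, 1) = \pi_A(1, e)$, so $g \d \breve r = e$ by uniqueness of transposes along $\pi_A$, and then $g = \tp{e, q}$ by the uniqueness clause of \cref{semanticiser-1}. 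Together with density of $\pi$, this shows $(P, \pi)$ is a presheaf object for $B$ by \cref{presheaf-object}; and then the hypothesis $\pi(f, 1) = \pi_A(1, \breve r)$ is exactly the assertion that $\breve r$ exhibits the restriction of $f$ along this presheaf object.

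The step requiring the most care is not conceptual but a matter of bookkeeping: ensuring that \cref{semanticiser-density} is legitimately invoked in both directions, which needs the commutativity $\pi(f, 1) = \pi_A(1, \breve r)$ — available in the forward direction because it is part of the data of a semanticiser, and in the backward direction because it is built into ``$\breve r$ exhibits the restriction of $f$'' — and checking that this phrase is interpreted in the sense dictated by \cref{presheaf-object}. The only true calculation is the short restriction-composition identity used in the uniqueness half of (a) $\Rightarrow$ (b).
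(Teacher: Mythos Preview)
Your proposal is correct and follows essentially the same approach as the paper: both use \cref{semanticiser-density} (via density of $\pi_A$) to reduce to the one-dimensional universal property, observe that the cone condition forces $e = \widebreve{p(f,1)}$, and then match up the semanticiser universal property with that of the presheaf object, invoking \cref{presheaf-restriction} for the forward direction. Your (a)$\Rightarrow$(b) argument is somewhat more explicit than the paper's, which dispatches the presheaf-object universal property as ``trivial'' without writing out the uniqueness computation you give; your restriction-composition calculation is exactly the detail the paper suppresses.
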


\begin{proof}
    Since $\pi_A$ is dense, it suffices by \cref{semanticiser-density} to show that the one-dimensional universal property of the semanticiser is equivalent to the one-dimensional universal property of the presheaf object.

    The universal property of the presheaf object asks that, for each loose-cell $p \colon X \lto B$, there is a unique tight-cell $\breve p \colon X \to P$ such that $p = \pi(1, \breve p)$.

    The universal property of the semanticiser asks that, for each tight-cell $e \colon X \lto \P A$ and loose-cell $p \colon X \lto B$ such that $p(f, 1) = \pi_A(1, e)$, there is a unique tight-cell $\tp{e, p} \colon X \to P$ such that $\tp{e, p} \d \breve r = e$ and $\pi(1, \tp{e, p}) = p$. Note that the cone condition uniquely determines $e = \widebreve{p(f, 1)}$.

    Thus, supposing that $\pi$ exhibits a presheaf object and that $\breve r = f^*$, it follows from \cref{presheaf-restriction} that $\breve p \d f^* = \widebreve{p(f, 1)}$, so that the square exhibits a semanticiser. Conversely, supposing the square exhibits a semanticiser, $\pi$ trivially exhibits a presheaf object. Furthermore, commutativity of the square expresses that $r = \pi_A(1, \breve r) = \pi(f, 1)$ and hence that $\breve r = \widebreve{\pi(f, 1)} = f^*$.
\end{proof}

\begin{corollary}
    \label{algebra-object-for-yo-monad}
    Let $\X$ be an exact \ve{}, let $A$ be an object admitting a presheaf object, and let $T$ be a $\yo_A$-monad. The opalgebra object for $T$ is given by the collapse of the corresponding loose-monad. Furthermore, $T$ admits an algebra object if and only if $\Opalg(T)$ admits a presheaf object, in which case they are isomorphic over $\P A$.
    \[\begin{tikzcd}
    	{\Alg(T)} && {\P(\Opalg(T))} \\
    	& {\P A}
    	\arrow["{{k_T}^*}", from=1-3, to=2-2]
    	\arrow["\iso", from=1-1, to=1-3]
    	\arrow["{u_T}"', from=1-1, to=2-2]
    \end{tikzcd}\]
\end{corollary}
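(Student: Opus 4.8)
The plan is to assemble the statement from four ingredients already in place: the opalgebra-object-via-collapse result (\cref{opalgebra-object-from-collapse}), the semanticiser theorem (\cref{semanticiser-theorem}), the comparison between semanticisers and presheaf objects (\cref{semanticiser-iff-presheaf-object}), and the formal Yoneda lemma (\cref{Yoneda}). No genuinely hard new work should be required; the content is bookkeeping to match up the various squares.

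First I would dispatch the opalgebra object. Since $\X$ is exact and $T$ is a $\yo_A$-monad with carrier $t \colon A \to \P A$, \cref{opalgebra-object-from-collapse} gives that the coprojection $\copi_{E(\yo_A, T)} \colon A \to \clps{E(\yo_A, T)}$ exhibits the opalgebra object for $T$, where $E = \P A$. By \cite[Theorem~4.22]{arkor2024formal}, the loose-monad $E(\yo_A, T) = \P A(\yo_A, T)$ on $A$ is exactly the image of $T$ under the isomorphism $\RMnd(\yo_A) \iso \LMnd(A)$ of \cref{yo-relative-monads}; hence $\Opalg(T)$ is the collapse of the corresponding loose-monad, as claimed.

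For the second half, I would invoke \cref{semanticiser-theorem}: a tight-cell $u \colon D \to \P A$ exhibits the algebra object for $T$ if and only if there is a loose-cell $\pi \colon D \lto \Opalg(T)$ rendering the square below a semanticiser, where $E(\yo_A, 1) = (\P A)(\yo_A, 1)$ and the bottom tight-cell is $k_T \colon A \to \Opalg(T)$.
\[\begin{tikzcd}
	D & {\Opalg(T)} \\
	{\P A} & A
	\arrow["{(\P A)(\yo_A, 1)}"', "\shortmid"{marking}, from=2-1, to=2-2]
	\arrow["{k_T}"', from=2-2, to=1-2]
	\arrow["u"', from=1-1, to=2-1]
	\arrow["\pi", "\shortmid"{marking}, from=1-1, to=1-2]
\end{tikzcd}\]
By \cref{Yoneda} we have $(\P A)(\yo_A, 1) \iso \pi_A$, so this square has precisely the shape to which \cref{semanticiser-iff-presheaf-object} applies, with $f \defeq k_T$ and $\breve r \defeq u$. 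That lemma then says the square is a semanticiser if and only if $\pi \colon D \lto \Opalg(T)$ exhibits $D$ as a presheaf object for $\Opalg(T)$ and $u$ exhibits the restriction $k_T^* \colon \P(\Opalg(T)) \to \P A$ along $k_T$. Combining the two equivalences: $T$ admits an algebra object if and only if $\Opalg(T)$ admits a presheaf object; and when it does, one may take $D = \P(\Opalg(T))$, $\pi = \pi_{\Opalg(T)}$, and $u = k_T^*$, whence $u_T \colon \Alg(T) \to \P A$ is isomorphic to $k_T^* \colon \P(\Opalg(T)) \to \P A$ over $\P A$.

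The only point needing care — and it is not really an obstacle — is the identification step: one must check that the square emitted by \cref{semanticiser-theorem} (bottom-left object $\P A$, bottom-right object $A$, bottom loose-cell the conjoint of $\yo_A$, bottom tight-cell $k_T$) literally matches the square of \cref{semanticiser-iff-presheaf-object} under the isomorphism $(\P A)(\yo_A, 1) \iso \pi_A$ of \cref{Yoneda}, and that ``the corresponding loose-monad'' in the statement is exactly $E(\yo_A, T)$ under \cref{yo-relative-monads}. Everything else follows immediately from the cited results.
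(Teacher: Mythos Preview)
Your proposal is correct and follows essentially the same route as the paper: the paper's proof invokes \cref{yo-relative-monads} and \cref{opalgebra-object-from-collapse} for the opalgebra object, then combines \cref{semanticiser-theorem} (noting $\yo_A$ is dense via \cref{Yoneda}) with \cref{semanticiser-iff-presheaf-object} for the algebra object, exactly as you do. The only cosmetic difference is that the paper cites \cref{Yoneda} to justify the density hypothesis of \cref{semanticiser-theorem}, whereas you cite it to match the bottom loose-cell with $\pi_A$; these are two faces of the same observation.
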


\begin{proof}
    By \cref{yo-relative-monads}, the loose-monad associated to $T$ is given by $\P A(\yo_A, T)$; by \cref{opalgebra-object-from-collapse}, its collapse exhibits the opalgebra object for $T$. The second statement then follows from \cref{semanticiser-theorem}, using that $\yo_A$ is dense by \cref{Yoneda}, together with \cref{semanticiser-iff-presheaf-object} with respect to squares of the following form.
    \[\begin{tikzcd}
    	\cdot & {\Opalg(T)} \\
    	{\P A} & A
    	\arrow["{\pi_A}"', "\shortmid"{marking}, from=2-1, to=2-2]
    	\arrow["{k_T}"', from=2-2, to=1-2]
    	\arrow[from=1-1, to=2-1]
    	\arrow["\shortmid"{marking}, from=1-1, to=1-2]
    \end{tikzcd}\]
\end{proof}

It follows from \cref{yo-relative-monads,from-monad-to-associated-loose-monad} that every $j$-monad $T$ induces an associated $\yo_A$-monad $(T \d n_j)$ by postcomposing $n_j \colon E \to \P A$; alternatively, this can be seen to follow from \cite[Proposition~5.37]{arkor2024formal} using \cref{nerve-is-relative-adjoint}.
\[\begin{tikzcd}
	& E \\
	A && {\P A}
	\arrow["{\yo_A}"', from=2-1, to=2-3]
	\arrow[""{name=0, anchor=center, inner sep=0}, "{n_j}", from=1-2, to=2-3]
	\arrow[""{name=1, anchor=center, inner sep=0}, "j"{description}, from=2-1, to=1-2]
	\arrow[""{name=2, anchor=center, inner sep=0}, "t", curve={height=-18pt}, from=2-1, to=1-2]
	\arrow["\dashv"{anchor=center}, shift right, draw=none, from=1, to=0]
	\arrow["\eta"', shorten <=3pt, shorten >=3pt, Rightarrow, from=1, to=2]
\end{tikzcd}\]

We consequently recover the following generalisation of \cref{algebra-object-for-yo-monad} (\cf{}~\cite[Remark~5.5.7]{arkor2022monadic}). The relevance of this characterisation to the nerve theorem will be expounded elsewhere.

\begin{corollary}
    Let $\X$ be an exact \ve{} and let $\jAE$ be a tight-cell. Suppose that $A$ admits a presheaf object. The $\yo_A$-monad $(T \d n_j)$ admits an algebra object if and only if $\Opalg(T)$ admits a presheaf object, in which case they are isomorphic over $\P A$.
    \[\begin{tikzcd}
    	{\Alg(T \d n_j)} && {\P(\Opalg(T))} \\
    	& {\P A}
    	\arrow["{{k_T}^*}", from=1-3, to=2-2]
    	\arrow["\iso", from=1-1, to=1-3]
    	\arrow["{u_{T \d n_j}}"', from=1-1, to=2-2]
    \end{tikzcd}\]
\end{corollary}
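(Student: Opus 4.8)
The plan is to obtain the statement as an essentially immediate consequence of \cref{algebra-object-for-yo-monad}, applied to the $\yo_A$-monad $(T \d n_j)$, once we have identified $\Opalg(T \d n_j)$ with $\Opalg(T)$ compatibly with the coprojections from $A$. So the first step is to make precise the $\yo_A$-monad structure on $t \d n_j$: as recorded just before the statement, it is the structure transported along \cref{yo-relative-monads} from the loose-monad $E(j, T)$ associated to $T$ by \cref{from-monad-to-associated-loose-monad} (equivalently, it is produced by \cite[Proposition~5.37]{arkor2024formal} from the relative adjunction $j \radj{\yo_A} n_j$ of \cref{nerve-is-relative-adjoint}). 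The only genuinely new observation is therefore that, under the isomorphism $\RMnd(\yo_A) \iso \LMnd(A)$ of \cref{yo-relative-monads}, the $\yo_A$-monad $(T \d n_j)$ corresponds precisely to the loose-monad $E(j, T)$. At the level of loose-cells this amounts to $\P A(\yo_A, t \d n_j) \iso E(j, t)$, which I would derive by chasing strict functoriality of restriction and \cref{presheaf-restriction,Yoneda}, starting from $\P A(\yo_A, n_j) \iso E(j, 1)$ (the content of \cref{nerve-is-relative-adjoint}); and it carries the loose-monad structure by the very construction of the $\yo_A$-monad.

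Granting this identification, \cref{from-monad-to-associated-loose-monad} together with \cref{opalgebra-object-from-collapse} gives $\Opalg(T) \iso \clps{E(j, T)}$, while \cref{algebra-object-for-yo-monad} gives $\Opalg(T \d n_j) \iso \clps{E(j, T)}$ as well, its opalgebra object being the collapse of its associated loose-monad; hence $\Opalg(T \d n_j) \iso \Opalg(T)$. This isomorphism intertwines the coprojections $k_{T \d n_j}$ and $k_T$, since both are the canonical tight-cell $A \to \clps{E(j, T)}$ forming part of the collapse data, and hence also intertwines ${k_{T \d n_j}}^*$ and ${k_T}^*$. Now \cref{algebra-object-for-yo-monad} says that $(T \d n_j)$ admits an algebra object if and only if $\Opalg(T \d n_j)$ admits a presheaf object, \ie{} if and only if $\Opalg(T)$ does; and in that case it exhibits $\Alg(T \d n_j) \iso \P(\Opalg(T \d n_j))$ over $\P A$, which transports along the above to the displayed square $\Alg(T \d n_j) \iso \P(\Opalg(T))$ over $\P A$.

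The main obstacle is exactly the bookkeeping in the first paragraph: verifying that the loose-cell isomorphism $\P A(\yo_A, t \d n_j) \iso E(j, t)$ upgrades to an isomorphism of loose-monads, and that it is compatible with the coprojections to the respective collapses. I expect this to be routine once one unwinds the functor underlying \cref{yo-relative-monads} as $\P A(\yo_A, {-})$ (which visibly preserves monoid structure, by \cite[Theorem~4.22]{arkor2024formal}), but since practically all the content of the corollary is concentrated here it deserves to be spelled out rather than asserted; everything downstream is a direct appeal to \cref{algebra-object-for-yo-monad}.
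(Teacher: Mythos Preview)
Your proposal is correct and takes essentially the same approach as the paper: reduce to \cref{algebra-object-for-yo-monad} via the identification $\Opalg(T \d n_j) \iso \Opalg(T)$ under $A$. The only difference is that the paper obtains this identification in one stroke by citing \cite[Remark~6.28]{arkor2024formal} (opalgebra objects are unchanged upon postcomposing the root by a relative right adjoint, here $j \radj{\yo_A} n_j$), whereas you re-derive it internally by showing that both opalgebra objects are the collapse of the same loose-monad $E(j, T)$. Your route is more self-contained within the present paper and makes explicit the content of the cited remark; the paper's route is shorter. The bookkeeping you flag --- that $\P A(\yo_A, T \d n_j) \iso E(j, T)$ as loose-monads, not merely as loose-cells --- is indeed routine, since the functor $\P A(\yo_A, {-})$ of \cref{yo-relative-monads} is an isomorphism of categories carrying monoid structure, and the paragraph preceding the corollary already records that the $\yo_A$-monad $(T \d n_j)$ corresponds to $E(j, T)$ under it.
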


\begin{proof}
    Immediate from \cref{algebra-object-for-yo-monad}, observing that $\Opalg(T \d n_j) \iso \Opalg(T)$ under $A$ by \cite[Remark~6.28]{arkor2024formal}.
\end{proof}

\appendix

\section{Strictification for \ve{}s}
\label{strictification}

Recall that \vdcs{}, their functors, and tight transformations form a 2-category $\VDbl$~\cite[Definition~3.1]{cruttwell2010unified}. An \emph{equivalence of \vdcs{}} is an equivalence in $\VDbl$.

\begin{theorem}
    \label{strictification-for-restriction}
    For every \vdc{} $\X$ admitting restrictions, there is an equivalent \vdc{} $\X'$, with the same underlying category of tight-cells, admitting a strictly functorial choice of restrictions.
\end{theorem}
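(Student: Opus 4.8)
The plan is to obtain $\X'$ by a Grothendieck-style construction that replaces each loose-cell of $\X$ by a \emph{decorated} loose-cell carrying a formal reindexing datum, so that restriction in $\X'$ becomes literal composition of tight-cells, which is strictly associative and unital. First I would fix a choice of restrictions in $\X$ (available by hypothesis) and, re-choosing if necessary, normalise it so that $p(1,1) = p$ on the nose; this is always possible, since the identity 2-cell exhibits $p$ as its own restriction along identities. Then define $\X'$ to have the same objects and the same category of tight-cells as $\X$; a loose-cell $A \lto B$ of $\X'$ is a triple $(f \colon B \to \tilde B,\ \bar p \colon \tilde A \lto \tilde B,\ g \colon A \to \tilde A)$ of data in $\X$, whose \emph{meaning} is the loose-cell $\bar p(f, g) \colon A \lto B$; and a 2-cell of $\X'$ between decorated loose-cells $P_1, \dots, P_n \tto Q$, with prescribed boundary tight-cells, is by definition a 2-cell of $\X$ of the corresponding shape from the meanings of $P_1, \dots, P_n$ to the meaning of $Q$. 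Composition and identities of 2-cells, and hence all the \vdc{} axioms, are inherited verbatim from $\X$. Restriction in $\X'$ is then defined by
\[ (f, \bar p, g)(f', g') \;\defeq\; (f f', \; \bar p, \; g g') \colon A' \lto B', \]
which satisfies $(f, \bar p, g)(1,1) = (f, \bar p, g)$ and $(f, \bar p, g)(f', g')(f'', g'') = (f, \bar p, g)(f' f'', g' g'')$ strictly, purely because composition of tight-cells in $\X$ is strict; the exhibiting 2-cell is the composite in $\X$ of the coherence isomorphism $\bar p(f f', g g') \iso \bar p(f, g)(f', g')$ with the cartesian 2-cell of $\X$ witnessing $\bar p(f, g)(f', g')$.

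The next step is to check that these exhibiting 2-cells of $\X'$ are cartesian. Unwinding the definition of $\X'$-2-cells as $\X$-2-cells between meanings, this reduces to cartesianness of the restriction 2-cells of $\X$, the coherence isomorphism being cancellable; similarly one verifies that the chosen cartesian cells of $\X'$ compose correctly, using the pasting coherence of restrictions in $\X$. This establishes that $\X'$ is a \vdc{} equipped with a strictly functorial choice of restrictions.

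It then remains to exhibit an equivalence $\X \simeq \X'$ in $\VDbl$ that is the identity on tight-cells. Let $I \colon \X \to \X'$ be the identity on objects, tight-cells, and 2-cells (legitimate, since after normalisation the meaning of $(1_B, p, 1_A)$ is $p(1,1) = p$), and send a loose-cell $p \colon A \lto B$ to $(1_B, p, 1_A)$; let $J \colon \X' \to \X$ be the identity on objects, tight-cells, and 2-cells, and send a decorated loose-cell to its meaning. Both are immediately \vdc{} functors. Then $J I$ is the identity functor on $\X$, whereas $I J$ sends $(f, \bar p, g)$ to $(1, \bar p(f, g), 1)$, which has the same meaning $\bar p(f, g)$; the identity 2-cells of $\X$ on these common meanings assemble into an invertible tight transformation $I J \iso \mathrm{id}_{\X'}$. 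Hence $I$ is an equivalence in $\VDbl$, and, since $I$ is literally the identity on the category of tight-cells, $\X'$ has the same underlying category of tight-cells as $\X$.

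The main difficulty is bookkeeping rather than conceptual: although $\X'$ is genuinely strict, comparing it with $\X$ and checking that its structure 2-cells are cartesian forces one to carry along the coherence isomorphisms witnessing pseudofunctoriality of the chosen restrictions of $\X$ — in particular the composition isomorphisms $p(f, g)(f', g') \iso p(f f', g g')$ together with their associativity coherence — and to verify that all the inherited data respect them. An alternative, more abstract route would present \vdcs{} with restrictions as pseudoalgebras for a suitable 2-monad and invoke a general strictification theorem; the explicit construction above has the advantage of manifestly fixing the category of tight-cells, and it serves as the template for the strictification of pseudo equipments in \cref{strictification-of-pseudo-equipments}.
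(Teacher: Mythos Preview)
Your proposal is correct and takes essentially the same approach as the paper: both replace loose-cells by triples $(f, \bar p, g)$ with meaning $\bar p(f, g)$, define 2-cells via meanings, and set $(f, \bar p, g)(f', g') \defeq (ff', \bar p, gg')$ so that strictness of restriction is inherited from strict associativity of tight composition. The paper is somewhat more laconic---it only constructs the functor $\X' \to \X$ and asserts it is an equivalence, and it does not explicitly mention the normalisation $p(1,1) = p$ that you flag---so your treatment is, if anything, slightly more careful; but the construction and the key idea are identical.
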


\begin{proof}
    We define a \vdc{} $\X'$ as follows.
    \begin{enumerate}
        \item The category of objects and tight-cells is given by that of $\X$.
        \item A loose-cell from $A$ to $D$ is a diagram $A \xto g B \xlto p C \xfrom f D$ in $\X$.
        \item A 2-cell with frame
        \[\begin{tikzcd}[column sep=7em]
        	{A_0} & {A_1} & \cdots & {A_{n - 1}} & {A_n} \\
        	{B_0} &&&& {B_n}
        	\arrow["{(f_1, p_1, g_1)}"', "\shortmid"{marking}, from=1-2, to=1-1]
        	\arrow["{(f_2, p_2, g_2)}"', "\shortmid"{marking}, from=1-3, to=1-2]
        	\arrow["{(f_{n - 1}, p_{n - 1}, g_{n - 1})}"', "\shortmid"{marking}, from=1-4, to=1-3]
        	\arrow["{(f_n, p_n, g_n)}"', "\shortmid"{marking}, from=1-5, to=1-4]
        	\arrow["{(f', p', g')}", "\shortmid"{marking}, from=2-5, to=2-1]
        	\arrow["f"', from=1-1, to=2-1]
        	\arrow["g", from=1-5, to=2-5]
        \end{tikzcd}\]
        is a 2-cell with the following frame in $\X$.
        \[\begin{tikzcd}[column sep=7em]
        	{A_0} & {A_1} & \cdots & {A_{n - 1}} & {A_n} \\
        	{B_0} &&&& {B_n}
        	\arrow["{p_1(f_1, g_1)}"', "\shortmid"{marking}, from=1-2, to=1-1]
        	\arrow["{p_2(f_2, g_2)}"', "\shortmid"{marking}, from=1-3, to=1-2]
        	\arrow["{p_{n - 1}(f_{n - 1}, g_{n - 1})}"', "\shortmid"{marking}, from=1-4, to=1-3]
        	\arrow["{p_n(f_n, g_n)}"', "\shortmid"{marking}, from=1-5, to=1-4]
        	\arrow["{p'(f', g')}", "\shortmid"{marking}, from=2-5, to=2-1]
        	\arrow["f"', from=1-1, to=2-1]
        	\arrow["g", from=1-5, to=2-5]
        \end{tikzcd}\]
        \item Composition of 2-cells is given by that in $\X$.
        \item The identity of a loose-cell is given by that in $\X$.
    \end{enumerate}
    The associativity and unitality laws for $\X'$ follow from those for $\X$. There is a canonical functor $\X' \to \X$ sending each loose-cell $(f, p, g)$ to $p(f, g)$, which is the identity on the tight category and \ff{} on 2-cells. It is trivially surjective on loose-cells, since each loose-cell $p \colon A \lto D$ is the image of the diagram $A =\!= A \xlto p D =\!= D$, and thus the canonical functor is an equivalence of \vdcs{}.

    It remains to show that $\X'$ admits a strict choice of restrictions. To do so, observe that the cartesian 2-cell in $\X$ on the left below exhibits a cartesian 2-cell in $\X'$ on the right below.
    \[
    \begin{tikzcd}[column sep=huge]
    	{A_0} & {A_n} \\
    	{B_0} & {B_n}
    	\arrow["{p(f, g)}", "\shortmid"{marking}, from=2-2, to=2-1]
    	\arrow[""{name=0, anchor=center, inner sep=0}, "{f'}"', from=1-1, to=2-1]
    	\arrow[""{name=1, anchor=center, inner sep=0}, "{g'}", from=1-2, to=2-2]
    	\arrow["{p(f'f, g'g)}"', "\shortmid"{marking}, from=1-2, to=1-1]
    	\arrow["\cart"{description}, draw=none, from=0, to=1]
    \end{tikzcd}
    \hspace{4em}
    \begin{tikzcd}[column sep=huge]
    	{A_0} & {A_n} \\
    	{B_0} & {B_n}
    	\arrow["{(f, p, g)}", "\shortmid"{marking}, from=2-2, to=2-1]
    	\arrow[""{name=0, anchor=center, inner sep=0}, "{f'}"', from=1-1, to=2-1]
    	\arrow[""{name=1, anchor=center, inner sep=0}, "{g'}", from=1-2, to=2-2]
    	\arrow["{(f f', p, g g')}"', "\shortmid"{marking}, from=1-2, to=1-1]
    	\arrow["\cart"{description}, draw=none, from=0, to=1]
    \end{tikzcd}
    \]
    It is clear that this choice of restrictions is strictly functorial, being inherited from composition of tight-cells in $\X$.
\end{proof}

One might expect the definition of a strict \ve{} to include also a condition on the loose-identities, \ie{} that composition of loose-cells is strictly unital. However, as the following lemma shows, we may always choose composites in a given \vdc{} in such a way as to make this hold.

\begin{lemma}
    \label{strictification-for-loose-identities}
    Every opcartesian 2-cell,
    \[\begin{tikzcd}
    	{A_0} & \cdots & {A_n} \\
    	{A_0} && {A_n}
    	\arrow["{q_m}"', "\shortmid"{marking}, from=1-3, to=1-2]
    	\arrow["{q_1}"', "\shortmid"{marking}, from=1-2, to=1-1]
    	\arrow["q", "\shortmid"{marking}, from=2-3, to=2-1]
    	\arrow[""{name=0, anchor=center, inner sep=0}, Rightarrow, no head, from=1-3, to=2-3]
    	\arrow[""{name=1, anchor=center, inner sep=0}, Rightarrow, no head, from=1-1, to=2-1]
    	\arrow["\opcart"{description}, draw=none, from=1, to=0]
    \end{tikzcd}\]
    for which the object $A_i$ admits a loose-identity (for some fixed $0 \leq i \leq n$), factors through an opcartesian 2-cell as follows.
    \[\begin{tikzcd}[column sep=large]
    	{A_0} & \cdots & {A_i} & {A_i} & \cdots & {A_n} \\
    	{A_0} & \cdots & {A_i} & {A_i} & \cdots & {A_n} \\
    	{A_0} &&&&& {A_n}
    	\arrow["{q_m}"{description}, from=2-6, to=2-5]
    	\arrow["q", "\shortmid"{marking}, from=3-6, to=3-1]
    	\arrow[""{name=0, anchor=center, inner sep=0}, Rightarrow, no head, from=2-6, to=3-6]
    	\arrow[""{name=1, anchor=center, inner sep=0}, Rightarrow, no head, from=2-1, to=3-1]
    	\arrow["{q_1}"{description}, from=2-2, to=2-1]
    	\arrow["{A(1, 1)}"{description}, from=2-4, to=2-3]
    	\arrow["{q_i}"{description}, from=2-3, to=2-2]
    	\arrow["{q_{i + 1}}"{description}, from=2-5, to=2-4]
    	\arrow[Rightarrow, no head, from=1-4, to=1-3]
    	\arrow[""{name=2, anchor=center, inner sep=0}, Rightarrow, no head, from=1-3, to=2-3]
    	\arrow[""{name=3, anchor=center, inner sep=0}, Rightarrow, no head, from=1-4, to=2-4]
    	\arrow[""{name=4, anchor=center, inner sep=0}, Rightarrow, no head, from=1-1, to=2-1]
    	\arrow[""{name=5, anchor=center, inner sep=0}, Rightarrow, no head, from=1-6, to=2-6]
    	\arrow["{q_m}"', "\shortmid"{marking}, from=1-6, to=1-5]
    	\arrow["{q_{i + 1}}"', "\shortmid"{marking}, from=1-5, to=1-4]
    	\arrow["{q_i}"', "\shortmid"{marking}, from=1-3, to=1-2]
    	\arrow["{q_1}"', "\shortmid"{marking}, from=1-2, to=1-1]
    	\arrow["\opcart"{description}, draw=none, from=1, to=0]
    	\arrow["{=}"{description}, draw=none, from=4, to=2]
    	\arrow["{=}"{description}, draw=none, from=3, to=5]
    	\arrow["\opcart"{description}, draw=none, from=2, to=3]
    \end{tikzcd}\]
\end{lemma}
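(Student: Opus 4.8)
The plan is to obtain the factorisation using only the universal property of the given opcartesian 2-cell together with that of the loose-identity; in particular, no global assumption on the existence of composites in $\X$ is needed. Write $A$ for the object $A_i$ and $\mu \colon (q_1, \dots, q_m) \tto q$ for the opcartesian 2-cell to be factored. Since $A$ admits a loose-identity (\cite[Definition~2.4]{arkor2024formal}), the loose-cell $A(1,1)$ comes equipped with an opcartesian 2-cell $\iota$ from the empty chain at $A$ to $A(1,1)$.

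First I would construct the candidate for the opcartesian 2-cell appearing in the statement. Viewing the source of $\mu$ as $(q_1, \dots, q_i)$ followed by the empty chain at $A$ followed by $(q_{i+1}, \dots, q_m)$, opcartesianness of $\iota$ produces a unique 2-cell
\[\mu' \colon (q_1, \dots, q_i, A(1,1), q_{i+1}, \dots, q_m) \tto q\]
whose composite with $\iota$ in the $A(1,1)$-slot, together with identity 2-cells on the remaining $q_\ell$, is $\mu$. This is precisely the pasting displayed in the statement, and uniqueness of the factorisation is immediate from opcartesianness of $\iota$. The boundary cases $i = 0$ and $i = m$, where one of the two flanking chains is empty, are handled verbatim.

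It remains to verify that $\mu'$ is opcartesian. Given any 2-cell $\phi$ with source $(\vec a, q_1, \dots, q_i, A(1,1), q_{i+1}, \dots, q_m, \vec b)$ and target some loose-cell $r$, form the composite of $\phi$ with $\iota$ in the $A(1,1)$-slot; this has source $(\vec a, q_1, \dots, q_m, \vec b)$, so by opcartesianness of $\mu$ it factors uniquely as the composite of $\mu$ with a unique 2-cell $\bar\phi$ of source $(\vec a, q, \vec b)$ and target $r$. Composing $\bar\phi$ first with $\mu'$ and then with $\iota$ gives, by associativity of 2-cell composition and the defining equation of $\mu'$, the composite of $\bar\phi$ with $\mu$, which is the composite of $\phi$ with $\iota$; since composing with $\iota$ is a bijection on 2-cells of this shape (opcartesianness of $\iota$), it follows that the composite of $\bar\phi$ with $\mu'$ is $\phi$. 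Uniqueness of $\bar\phi$ follows by the same cancellation: any competing factorisation of $\phi$ through $\mu'$, composed with $\iota$, yields a factorisation of $\phi \circ \iota$ through $\mu$, which $\mu$'s universal property forces to coincide with that of $\bar\phi$.

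I do not expect a genuine obstacle here: the argument is entirely formal, resting on associativity of the multicategory-style composition of 2-cells in a \vdc{} and on the two opcartesian universal properties invoked above. The only point requiring care is the bookkeeping of which slot each 2-cell occupies; the remaining task is to present the manipulation above as the clean pasting computation matching the displayed diagram.
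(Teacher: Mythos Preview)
Your proposal is correct and follows essentially the same approach as the paper: the paper's proof is a two-sentence sketch (``the factorisation exists by the universal property of the opcartesian 2-cell defining $A(1,1)$; the resulting 2-cell is opcartesian by opcartesianness of the given 2-cell''), and you have unpacked exactly this argument, correctly invoking the opcartesianness of both $\iota$ and $\mu$ where needed.
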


\begin{proof}
    That the given opcartesian 2-cell factors through \emph{some} 2-cell follows immediately from the universal property of the opcartesian 2-cell defining $A(1, 1)$. That the 2-cell is opcartesian follows from opcartesianness of the given 2-cell.
\end{proof}

We conclude by observing that \cref{strictification-for-restriction} gives a full strictification result for representable \ve{}s.

\begin{corollary}
    \label{strictification-of-pseudo-equipments}
    For every \pdc{} admitting companions and conjoints, there is an equivalent strict double category admitting a strictly functorial choice of companions and conjoints.
\end{corollary}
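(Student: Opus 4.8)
The plan is to strictify in two stages --- first the restrictions (and with them the companion/conjoint structure), then the loose-composition --- arranging the order so that the second stage does not disturb the strictness achieved by the first. As recalled in \cref{prerequisites}, a \pdc{} admitting companions and conjoints is precisely a representable \ve{} $\X$; being a \ve{}, $\X$ admits restrictions, so \cref{strictification-for-restriction} produces an equivalent \vdc{} $\X'$ with the same underlying category of tight-cells and a strictly functorial choice of restrictions (\cref{strict-ve}). I would first observe that $\X'$ is again a representable \ve{}: loose-identities, restrictions, and composites of loose-cells are each characterised by a universal property --- expressed by opcartesian, respectively cartesian, 2-cells --- and such universal properties are preserved and reflected by an equivalence of \vdcs{}; so $\X'$ admits loose-identities, (strict) restrictions, and composites, and we may assume our representable \ve{} comes equipped with strictly functorial restrictions from the outset.

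Next, I would strictify the loose-composition of $\X'$ by the standard ``string'' construction underlying the coherence theorem for bicategories, carried out relative to the fixed objects and tight-cells. The strict double category $\X''$ has the same objects and tight-cells as $\X'$; a loose-cell $A \lto B$ is a finite composable string of loose-cells of $\X'$; loose-composition is concatenation, with the empty string serving as loose-identity; and a 2-cell is a 2-cell of $\X'$ between the composites --- formed with a fixed (say, left-nested) bracketing --- of the strings on its boundary. Concatenation is strictly associative and unital, so $\X''$ admits all loose-composites on the nose; the coherence theorem applied to $\X'$ makes 2-cell composition well-defined and associative and yields the interchange law; \cref{strictification-for-loose-identities} is exactly the statement that this choice of composites is strictly unital; and the evident \vdf{} $\X'' \to \X'$ sending a string to its composite is an equivalence. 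Because the restriction of a string acts only on its outermost factors --- where it agrees with the strictly functorial restriction of $\X'$ --- the restrictions of $\X''$ remain strictly functorial, so $\X''$ is a strict double category with strictly functorial restrictions, equivalent to the original $\X$.

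Finally, I would equip $\X''$ with companions and conjoints: these exist because $\X'' \equiv \X$ and companions and conjoints are invariant under equivalence of \vdcs{}, and one takes the companion and conjoint of a tight-cell $f \colon A \to B$ to be the (length-one strings on the) restrictions $B(f, 1) \colon A \lto B$ and $B(1, f) \colon B \lto A$ of the loose-identity $B(1,1)$. Since restriction and the loose-identities are now strictly functorial, so is this choice of companions and conjoints, completing the construction.

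I expect the crux to be the second stage: verifying with care that the string construction, carried out in the presence of tight-cells and of multiary 2-cells, really does produce a \emph{strict double category} --- the well-definedness and associativity of 2-cell composition and the interchange law, all via the coherence theorem --- and, the genuinely delicate point, that passing to the strict loose-composition leaves the restriction operation strictly functorial (in particular, that the new, empty-string loose-identities are compatible with it), and hence leaves strictly functorial the companions and conjoints selected in the last step. The order of the two stages is essential: strictifying restrictions before composition keeps the outermost factors of each string strict, whereas the reverse order would reintroduce pseudo-unitality of loose-composition.
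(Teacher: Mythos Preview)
Your two-stage plan is the right idea, but you have the stages in the wrong order, and the order you chose does not work for the reason you yourself flag as delicate. The paper does the opposite: first strictify loose-composition using the Grandis--Par\'e coherence theorem for pseudo double categories, obtaining a strict double category $\X'$; then apply \cref{strictification-for-restriction} to $\X'$ to obtain $\X''$ with strictly functorial restrictions (hence, by Shulman's characterisation, strictly functorial companions and conjoints); finally, one checks that this second step preserves strictness of loose-composition, because the loose-composite of $(f,p,g)$ and $(f',p',g')$ in $\X''$ is represented by the loose-composite $p(f,g)\odot p'(f',g')$ in $\X'$, and strictness in $\X'$ passes through.

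Your order fails precisely at the empty string. In your string construction, the loose-identity on $A$ is the empty string $()_A$. For strictness of restriction you need $()_A(1,1) = ()_A$, but for a nontrivial pair $(f,g)$ the restriction $()_A(f,g)$ must be a loose-cell $W \lto Z$ with $W \neq Z$ in general, hence a \emph{non}-empty string (say the length-one string on $A(f,g)$). Now take any retraction pair $f f' = 1$, $g g' = 1$ with $(f,g)\neq(1,1)$: you get $()_A(f,g)(f',g') = (A(ff',gg')) = (A(1,1))$, a length-one string, whereas $()_A(ff',gg') = ()_A(1,1) = ()_A$, the empty string. These are not equal, so restriction is not strictly functorial. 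The tension is intrinsic: strict unitality of concatenation forces the loose-identity to be empty, but strict functoriality of restriction cannot accommodate an empty loose-identity whose restrictions are non-empty. Your invocation of \cref{strictification-for-loose-identities} does not help here; that lemma concerns factoring opcartesian 2-cells through an inserted loose-identity and says nothing about strict unitality of your concatenation. Your final remark---that the paper's order would ``reintroduce pseudo-unitality''---is exactly what the paper shows does \emph{not} happen, and is the step you should verify instead.
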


\begin{proof}
    To every \pdc{} $\X$, there is an equivalent strict double category $\X'$~\cite[Theorem~7.5]{grandis1999limits}. Applying \cref{strictification-for-restriction} then produces an equivalent \pdc{} $\X''$ admitting a strictly functorial choice of restrictions. By \cite[Theorem~4.1]{shulman2008framed}, a (strictly functorial) choice of restrictions in a \pdc{} is equivalent to a (strictly functorial) choice of companions and conjoints. It remains to show that $\X''$ is actually a strict double category. The loose-composite of $(f, p, g)$ and $(f', p', g')$ in $\X''$ is represented by the loose-composite of $p(f, g)$ and $p'(f', g')$ in $\X'$, and so if loose-composition is strict in $\X'$, it is also strict in $\X''$.
\end{proof}

\begin{remark}
    Michael Shulman has suggested an alternative approach to the strictification of pseudo double categories with companions and conjoints~\cite{shulman2022strictness}, in which one modifies the tight category -- equipping each tight-cell with a specified companion and conjoint -- rather than modifying the loose bicategory. However, such a strictification $\X'$ is equivalent to the original pseudo double category $\X$ only in a weak sense, in which the tight 2-categories $\tX$ and $\u{\X'}$ are merely biequivalent rather than being 2-equivalent, so that $\X$ and $\X'$ are not equivalent as objects of $\VDbl$.
\end{remark}

\printbibliography

\end{document}